\documentclass[preprint,3p,authoryear,11pt,fleqn]{elsarticle}
\usepackage[round]{natbib}

\textwidth 6.30in
\topmargin -0.50in
\oddsidemargin 0in
\evensidemargin 0in
\textheight 9.50in
\usepackage{amsmath,amsthm}
\usepackage{amssymb}
\usepackage{dsfont}
\usepackage{amsfonts}
\usepackage{enumerate}
\usepackage{mathrsfs}
\usepackage{multirow}
\usepackage{float}
\usepackage{graphicx}
\usepackage{color}
\usepackage{abbreviationsSSC-ext}
\journal{?}	

\begin{document}
\begin{frontmatter}

\title{Robust change-point detection in panel data}

\author[AD]{Alexander D{\"u}rre\corref{ad.cor}}
\address[AD]{Fakult\"at Statistik, Technische Universit\"at Dortmund, 44221 Dortmund, Germany}
\author[AD]{Roland Fried}

\cortext[ad.cor]{corresponding author: alexander.duerre@udo.edu, +49-231-755-4288}

\begin{abstract} 
In panel data we observe a usually high number $N$ of individuals over a time period $T$. Even if $T$ is large one often assumes stability of the model over time. We propose a nonparametric and robust test for a change in location and derive its asymptotic distribution under short range dependence and for $N,T\rightarrow \infty$. Some simulations show its usefulness under heavy tailed distributions.
\end{abstract}

\begin{keyword}
panel data\sep change-point test\sep M-estimation

\MSC[2010]  62G35 \sep 62G20 \sep 62M10
\end{keyword}
\end{frontmatter}


\section{Introduction}
There is an increasing amount of literature on panel data taking structural breaks into account, see for example \cite{im2005}, \cite{Bai2009}, \cite{karavias2012} and \cite{baltagi2013}. Often the work of \cite{joseph1992} is regarded as starting point of change point detection in panel data. Therein two different change point models are introduced, namely the common change point model, where the time of change $\theta_i \in \{1,\ldots,T\}$ is identical for every individual $i=1,\ldots,N$, and the random change point model, where $\theta_i$ is independent and identically distributed following an unknown distribution $P_\theta.$ Recent work mostly considers the first approach. \cite{joseph1992}, \cite{wachter2005}, \cite{wachter2012} and \cite{baltagi2015b} consider homogeneous panels, where either the dependence structure or the noise distribution is the same for all individuals, whereas \cite{Bai} and \cite{kim2011} look also at heterogeneous ones.\\
Surprisingly little attention has been paid to robust change-point procedures. To the best of our knowledge, the only exception is the article of \cite{joseph1992} where a robust variation of their original test procedure, a kind of Mann-Whitney-Wilcoxon test, is proposed. The lack of robust methods is in contrast to change point detection in one-dimensional \citep[see for example][]{csorgo1987,huskova1996,dehling2013change} or multi-dimensional time series \citep[see][]{koziol1978,quessy2013,vogel2015robust}. In these settings robust procedures are not only more reliable in case of some corrupted observations, but they also turn out to be more powerful under heavy tailed distributions \citep[see][]{dehling2012testing,dehling2015robust}.\\
In this article we propose a robust test for the fundamental problem of a common change point in location. As opposed to \cite{joseph1992} our test can cope with heterogeneous panels and short range dependence. Like \cite{Bai}, \cite{Horvath} and \cite{jirak2015uniform} we consider the case where both the time dimension $T$ and the cross-sectional dimension $N$ tend to infinity. In contrast to them, by choosing a bounded $\psi$-function, moment assumptions are not required. Our test is based on M estimation and can be seen as a generalization of the test proposed in \cite{Horvath}. \\
This paper is structured as follows: in the next section we define the test statistic and explain how to choose the required tuning parameters. We present theoretical properties and conditions in Section 3. Section 4 contains a small simulation study showing the usefulness of the procedure and its finite sample performance. All proofs are deferred to an appendix.
\section{Testing procedure}
Let $(X_{i,t})_{i \in \{1,\ldots,N\},~t\in \{1,\ldots,T \}}$ denote a panel where $N$ is the number of individuals, which are observed at $T$ equidistant time points. We assume the simple structure 
\begin{align*}
X_{i,t}=\eta_i+\delta_iI_{t>t_0}+\epsilon_{i,t},~~i=1,\ldots,N,~t=1,\ldots,T,
\end{align*}
implying that the outcome only depends on an individual location $\eta_i$, an individual level-shift $\delta_i$ at a common time point $t_0$ and a random error $\epsilon_{i,t}$. In this setting we test the null hypothesis of a stationary panel
\begin{align*}
H_0:~\delta_1,\ldots,\delta_N=0
\end{align*}
against the alternative of a structural break at an unknown time point $t_0:$
\begin{align*}
H_1:~\exists i \in \{1,\ldots,N\} \mbox{~such that~}\delta_i\neq 0.
\end{align*}
The individual error processes $(\epsilon_{i,t})_{t=1,\ldots,T},~i=1,\ldots,N$ are supposed to be stationary and independent of each other. They are allowed to have different distributions and to be short range dependent. For technical assumptions on $(\epsilon_{i,t})_{i\in \{1,\ldots,N \},~t\in \{1,\ldots,T\}}$ see Section 3. To avoid identification problems we set $\mbox{median}(\epsilon_{i,1})=0,~i=1,\ldots,N.$ \\
In the following paragraph we develop the proposed test procedure. If one is only interested in detecting a change in one individual $i$ it is quite common to look at its CUSUM statistic
\begin{align}\label{cusump1}
Z^{(i)}_T(x)=\frac{1}{\sqrt{T}\nu_i}\left(\sum_{t=1}^{\lfloor Tx \rfloor}X_{i,t}-\frac{\lfloor Tx \rfloor}{T}\sum_{t=1}^T X_{i,t}\right),~~x\in[0,1],
\end{align}
which basically compares the mean value of the first part with that of the second for every split point. A large absolute difference for any split point indicates a structural change. If there is serial dependence the CUSUM statistic (\ref{cusump1}) depends on the so called long run variance
\begin{align*}\nu_i^2=Var(X_{i,1})+2\sum_{h=1}^\infty \mbox{Cov}(X_{i,1},X_{i,1+h}).\end{align*}
Since (\ref{cusump1}) is a linear statistic it is sensitive regarding unusually small or large values. To bound the influence of outliers one can transform the observations with a so called $\Psi-$function
\begin{align*}
Y_{i,t}=\Psi_i\left(\frac{X_{i,t}-\mu_i}{\sigma_i}\right),~~i=1,\ldots,N,~t=1,\ldots,T
\end{align*}
with $\Psi_i:\mathbb{R}\rightarrow \mathbb{R},~\mu_i\in \mathbb{R},~\sigma_i>0,~i=1,\ldots,N.$
Two examples and their impact on a short time series are shown in Figure \ref{psifun}. In change-point analysis monotone $\Psi-$functions are preferred \citep{huskova2012} since redescending ones not only limit the influence of unusual values, which can be a result of a large level shift, but also can shrink them to 0 as we can can see in Figure \ref{psifun} on the right.\\
\begin{figure}[H]
\includegraphics[width=0.95\textwidth]{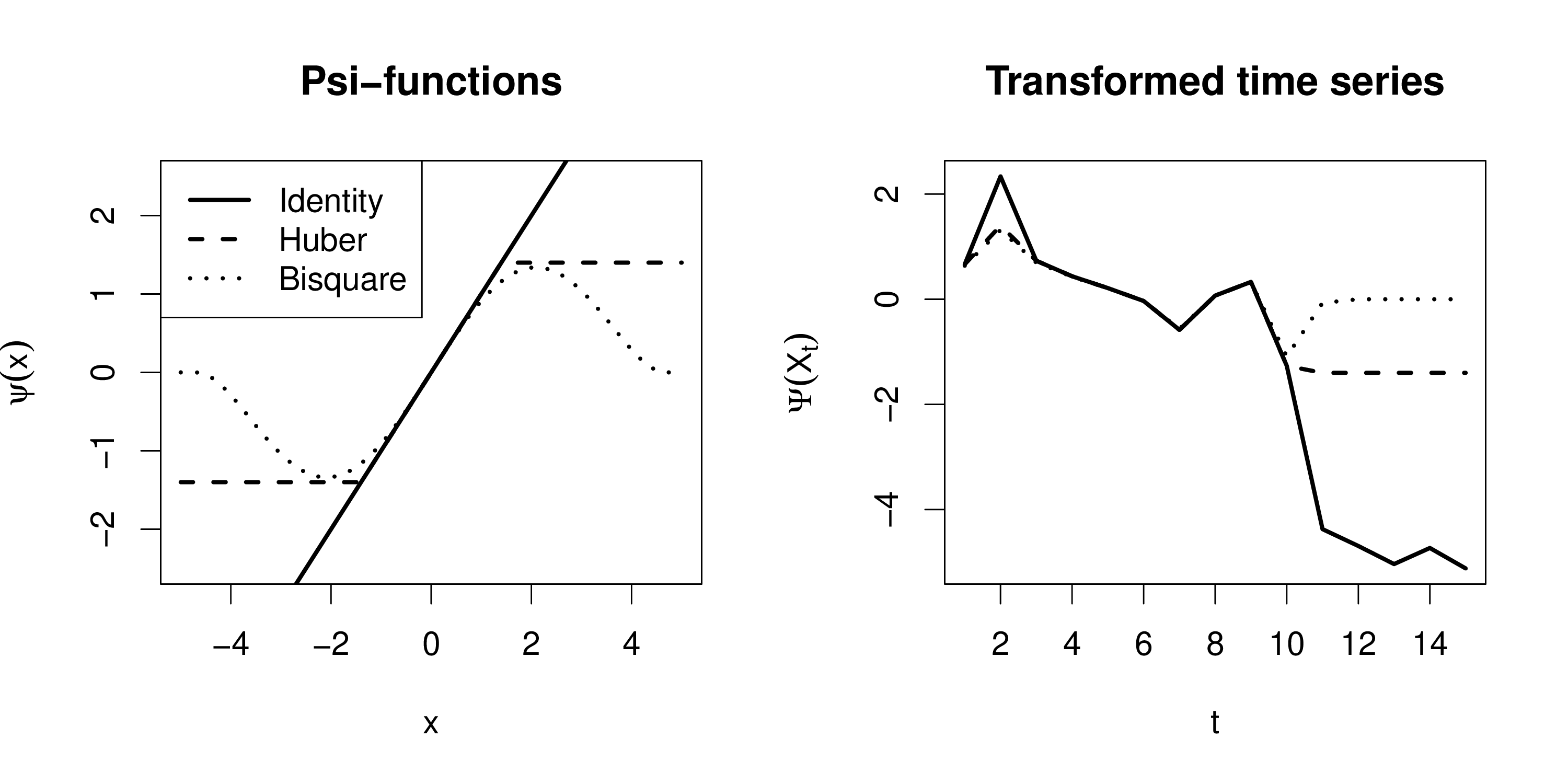}
\caption{$\Psi$-functions (left) and corresponding transformed time series (right) with a level shift at $t=11$.\label{psifun}}
\end{figure}
Usually the parameters $\mu_i,~i=1,\ldots,N$ describe the central location and $\sigma_i,~i=1,\ldots,N$ the scale. Their aim is to standardize the data such that outliers are treated independently of the scale and location of the underlying distribution. Regarding the change-point problem they can also be seen as tuning parameters, since the resulting test is valid under some restrictions we will state in Section 3 irrespective of their particular choice. But of course some choices are more suitable than others. If $\sigma_i$ is too large, outliers are hardly downweighted, and if it is too small, a lot of information is lost, see Figure \ref{sigmai}. An inappropriate value of $\mu_i$ can heavily skew formerly symmetric data and even destroy the data completely, if $\mu_i$ is far away from the observed data, see Figure \ref{sigmai} on the right. We will show in Section 3 that under some regularity conditions we can choose $\mu_i$ and $\sigma_i,~i=1,\ldots,N$ data adaptively. We recommend to use highly robust and computationally fast estimators for location respectively scale, like the median and the median absolute deviation (MAD).
\begin{figure}[H]
\includegraphics[width=0.95\textwidth]{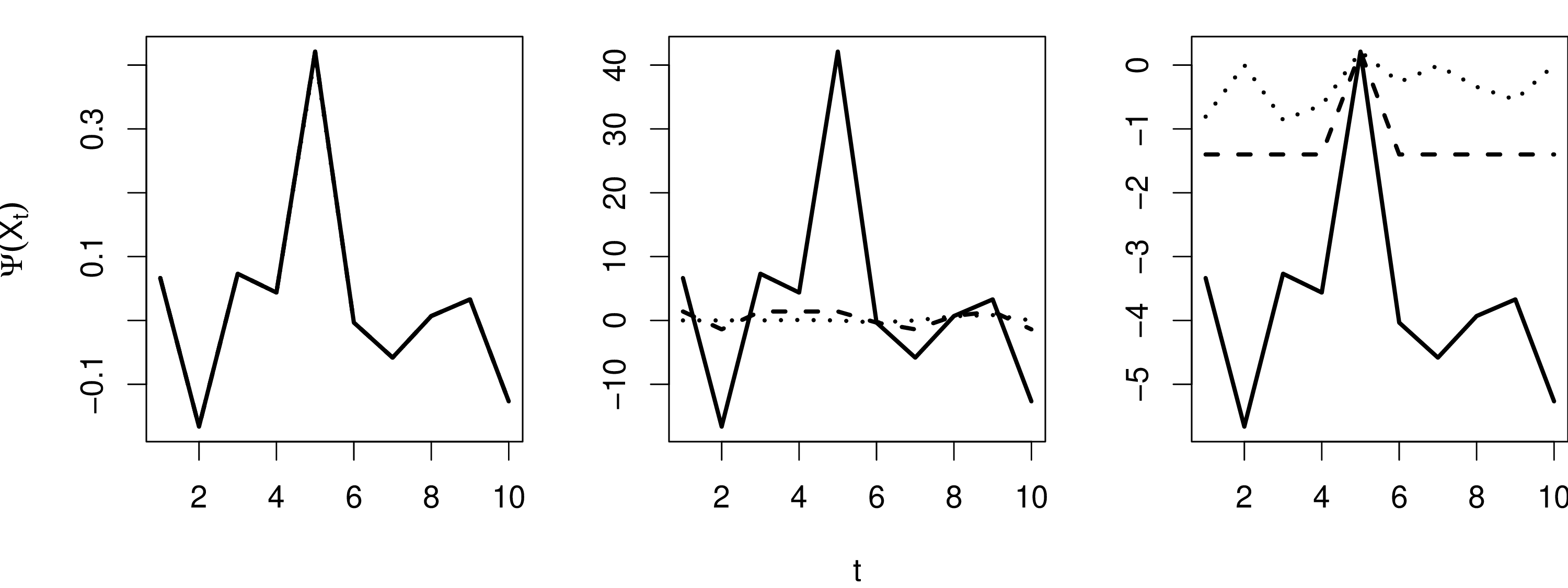}
\caption{Transformed time series using the identity (solid), Huber (dashed) and bisquare (dotted) $\Psi-$function with $\mu=0,~\sigma=10$ (left), $\mu=0,~\sigma=0.1$ (middle) and $\mu=4,~\sigma=1$ (right).\label{sigmai}}
\end{figure}
The CUSUM statistic of the transformed panel is then defined as
\begin{align*}
S^{(i)}_T(x)=\frac{1}{\sqrt{T}v_i}\left(\sum_{t=1}^{\lfloor Tx \rfloor}Y_{i,t}-\frac{\lfloor Tx \rfloor}{T}\sum_{t=1}^T Y_{i,t}\right),~~x\in[0,1]
\end{align*}
with
\begin{align*}v_i^2=Var(Y_{i,1})+2\sum_{h=1}^\infty \mbox{Cov}(Y_{i,1},Y_{i,1+h}).\end{align*} 
In the one dimensional case $N=1$, setting $\sigma_1=1$ and choosing $\mu_1$ as the M-estimator corresponding to the chosen $\Psi-$function this statistic was proposed by \cite{huskova2012} for change point detection.\\
Assuming independence between the individuals the related multivariate Wald-type test for finite $N$ equals
\begin{align}\label{Wald}
\sum_{i=1}^N \left(S^{(i)}_T(x)\right)^2,~~x\in[0,1],
\end{align}
which converges for $T\rightarrow \infty$ under some regularity conditions to the sum of independent squared Brownian bridges. If additionally $N\rightarrow \infty$ one has to normalize (\ref{Wald}):
\begin{align*}
W_{N,T}(x)=\frac{1}{\sqrt{N}}\sum_{i=1}^N \left(\left(S_T^{(i)}(x)\right)^2-\frac{\lfloor x T \rfloor(T-\lfloor x T \rfloor)}{T^2}\right),~~x\in [0,1].
\end{align*}
Note that $\frac{\lfloor x T \rfloor(T-\lfloor x T \rfloor)}{T^2}$ nearly equals the variance of a Brownian bridge and therefore approximates the mean of $S_T^{(i)}(x)$ for large $T$. Setting $\Psi_i(x)=x,~i=1,\ldots,N,~\mu_i$ cancels out, $\sigma_i$ is absorbed into $v_i$ and one arrives at the non-robust panel-cusum-statistic which was originally proposed by \cite{Bai} and investigated theoretically by \cite{Horvath}. If $T$ tends faster to infinity than $N$ (the accurate rates can be found in Section 3) and under some regularity conditions $W_{N,T}(x)$ converges weakly to a Gaussian process $\Gamma(x)$ defined by
\begin{align*}
\mathbb{E}(\Gamma(x))=0~~\mbox{and}~~\mbox{Cov}(\Gamma(x),\Gamma(y))=x^2(1-y^2),~0\leq x\leq y\leq 1.
\end{align*} 
It is shown in \cite{Horvath} that such a process can be simulated based on a standard Brownian motion $(B(t))_{0\leq t<\infty}$ using the following relationship:
\begin{align*}
\{\Gamma(x),~0\leq x\leq 1\}\stackrel{D}{=}\left\{\sqrt{2}(1-x)^2 B \left(\frac{x^2}{1-x^2} \right),~0\leq x\leq 1 \right\}.
\end{align*}
One rejects the null hypothesis of a stationary panel if
\begin{align}\label{tstat}
\sup_{0<x<1} |W_{N,T}(x)|
\end{align}
exceeds a certain quantile of $\sup_{0<x<1}|\Gamma(x)|.$ A small selection of critical values can be found in Table \ref{tablequan}.
\begin{table}[H]
\begin{center}
\begin{tabular}{l|ccccc}
$\alpha$&0.9&0.95&0.975&0.99&0.995\\\hline
$q_\alpha$&0.899&0.990&1.072&1.173&1.245
\end{tabular}
\caption{Quantiles of $\sup_{0<x<1}|\Gamma(x)|.$\label{tablequan}}
\end{center}
\end{table}
The proposed test statistic (\ref{tstat}) has the largest power if the change occurs in the middle of the sample. If one needs a high power near the margins, one can look at other functionals of $W_{N,T}(x)$, for example $\int_0^1 |W_{N,T}(x)|~dx.$\\
In practice $v_i,~i=1,\ldots,N,$ is unknown and has to be estimated. We propose to use a kernel estimator. Denote by
\begin{align*}
\hat{\gamma}_i(h)=\frac{1}{T}\sum_{t=1}^{N-h}\left(Y_{i,t}-\overline{Y}_i\right)\left(Y_{i,t+h}-\overline{Y}_i\right)
\end{align*}
with $\overline{Y}_i=\frac{1}{T}\sum_{t=1}^T Y_{i,t}$ the empirical autocovariance of $(Y_{i,t})_{t=1,\ldots,T}$ of lag $h,~h=1,\ldots,T-1$. Let furthermore $k:\mathbb{R}\rightarrow [-1,1]$ be a kernel function and $b_T$ a bandwidth, then
\begin{align}\label{kernel}
\hat{v}_i=\hat{\gamma}_{i,0}+2\sum_{h=0}^{b_{T}}\hat{\gamma}_{i}(h)k\left(\frac{h}{b_{T}}\right)
\end{align}
represents the related kernel estimator for $v_i.$ Theoretical conditions on $k$ and $b_T$ can be found in the next section. Simulations in Section 4 indicate that the flat top kernel $k=k_F$
\begin{align}\label{flattop}
k_F(x)=\begin{cases}
1&|x|\leq 0.5\\
2-2|x|&0.5<|x|\leq 1\\
0&|x|>1
\end{cases}
\end{align}
with $b_T=T^{0.4}$ works well if the serial dependence is not very large.
\section{Theoretical results}
In this section we give theoretical justification of the testing procedure and compile all conditions which are necessary for the asymptotical results. We start by defining the type of short range dependence we impose on the error processes $(\epsilon_{i,t})_{t=1,\ldots,T},~i=1,\ldots,N$.
We assume that they are near epoch dependent in probability (P-NED) on an absolutely regular process.
\begin{definition}
\begin{enumerate}[i)]
\item Let $A,B\subset{F}$ be two $\sigma-$fields on the probability space $(\Omega,F,P)$. Then the regularity coefficient of $A$ and $B$ is defined as
\begin{align*}
\beta(A,B)=\mathbb{E}\left(\sup_{M\in A}|P(M|B)-P(M)|\right).
\end{align*}
\item For a stationary process $(Z_t)_{t\in \mathbb{Z}}$, the absolute regularity coefficients are given by
\begin{align*}
\beta_k=\sup_{n\in \mathbb{N}}\beta(F_1^n,F_{n+k}^\infty),
\end{align*}
where $F_n^k=\sigma(Z_n,\ldots,Z_k)$ denotes the $\sigma-$field generated by $Z_n,\ldots,Z_k$. A process is called absolutely regular, if $\beta_k\rightarrow 0.$\\
\item The process $(X_t)_{t \in \mathbb{N}}$ is P-NED on the absolutely regular process $(Z_t)_{t\in \mathbb{Z}}$, if there is a sequence of approximating constants $(a_k)_{k\in \mathbb{N}}$ fulfilling $a_k\rightarrow 0$ for $k\rightarrow \infty$, a sequence of functions $f_k:\mathbb{R}^{ k}\rightarrow \mathbb{R}$ for $k\in \mathbb{N}$ and a decreasing function $\Phi:~(0,\infty)\rightarrow (0,\infty)$ such that
\begin{align}\label{PNEDprop}
P(|X_0-f_k(Z_{-k},\ldots,Z_k)|>\epsilon)\leq a_k\Phi(\epsilon)
\end{align}
\end{enumerate}
\end{definition}
\begin{remark}
\begin{itemize}
\item Looking at functionals of processes is quite natural, in fact ARMA and GARCH models are defined as functionals of iid sequences. Part \textit{iii)} demands that the influence of observations of the underlying process $(Z_t)_{t\in \mathbb{Z}}$, corresponding to time points far away from the observed $X_t$, is small and even vanishes, if the time difference between them tends to infinity.
\item The underlying process $(Z_t)_{t\in \mathbb{Z}}$ is sometimes assumed to be iid, see for example \cite{hormann2008} and \cite{aue2009break}, and though this class would yet be quite general, this assumption is in our case more restrictive than necessary. A main component of our proofs are moment inequalities, which work fine for absolutely regular processes \citep{philipp1986invariance}.
\item The concept of near epoch dependence has a long history. It was introduced in \cite{ibragimov1962} where the difference between the approximating functional $f_k(Z_{-k},\ldots,Z_k)$ and $X_0$ was measured by the $L_2$ norm, which requires existing second moments of $(X_t)_{t \in \mathbb{N}}$. With applications under heavy tailed distributions and robustness in mind this is somehow restrictive, which was the reason \cite{Bierens} proposed substituting the so called $L_0$ for the $L_2$ norm. We use here the variation presented by \cite{dehling2012testing} which adds the error function $\Phi.$
\item Let $\Psi$ be Lipschitz continuous with Lipschitz constant $L$, then it is easy to see that with $(X_t)_{t \in \mathbb{N}}$ also $(\Psi(X_t))_{t\in \mathbb{N}}$ is P-NED on the same underlying process $(Z_t)_{t\in \mathbb{Z}}$ with approximating functionals $\tilde{f}_k=\Psi\circ f_k$ and constants $(a_k)_{k\in \mathbb{Z}}$. For the error function corresponding to $(\Psi(X_t))_{t\in \mathbb{N}}$ we have $\tilde{\Phi}(x)=\Phi(x/L)$. Furthermore, if $\Psi$ is bounded by a constant $c$: $\tilde{\Phi}(|2c+\epsilon|)=0,~\epsilon>0.$
\end{itemize}
\end{remark}
Before stating our first result, we compile some assumptions on the errors $(\epsilon_{i,t})_{i=1,\ldots,N,~t=1,\ldots,T}$, which we need repeatedly.
\begin{assumption}\label{Grundassum}
Let $(\epsilon_{i,t})_{t=1,\ldots,T}$ be stationary processes, which are 
P-NED on absolutely regular processes $(Z_{i,t})_{t\in \mathbb{Z}}$ with approximating constants $(a_{k,i})_{k\in \mathbb{N}},$ regularity coefficients $(\beta_{k,i})_{k\in \mathbb{N}}$ and error functions $\Phi_i$ for $i=1,\ldots,N$, which fulfil
\begin{enumerate}[1)]
\item $(\epsilon_{
1,t})_{t=1,\ldots,T},\ldots,(\epsilon_{N,t})_{t=1,\ldots,T}$ are independent processes,
\item \textbf{either} a): 
\begin{enumerate}[i)]
\item there exists $c_1$ such that  $\sup_{i\in \mathbb{N}}|Y_{i,1}|<c_1~\mbox{a.s.},$
\item there exist $c_2>0$ and $b>8$ such that $\sup_{i\in \mathbb{N}}a_{k,i},\beta_{k,i}\leq c_2(1+k)^{-b},$
\item there exists $\Phi:\mathbb{R}_+\rightarrow \mathbb{R}_+$ with $\sup_{i\in \mathbb{N}}\Phi_i(x)\leq \Phi(x)~\forall x> 0$ such that $\int_0^1 \Phi(x)~dx<\infty,$
\end{enumerate}
\textbf{or}  b):
\begin{enumerate} [i)]
\item there exist $c_1>0$ and $a\geq 8$ such that $\sup_{i\in \mathbb{N}}\mathbb{E}\left(|Y_{i,1}|^a\right)<c_1,$
\item there exist $c_2>0,~b>8$ such that for $sup_{i \in \mathbb{N}}a_{k,i}\leq c_2(1+k)^{-\frac{b\cdot a}{a-7}} \mbox{~and~}
\sup_{i \in \mathbb{N}}\beta_{k,i}\leq c_2(1+k)^{-\frac{b\cdot a}{a-8}},$
\item there exists $\Phi:\mathbb{R}_+\rightarrow \mathbb{R}_+$ with $\sup_{i\in \mathbb{N}}\Phi_i(x)\leq \Phi(x)~\forall x> 0$ such that $\int_1^\infty x^{a-1}\Phi(x)~dx<\infty$ and $\int_0^1 x^{\frac{a}{a-7}-1}\Phi(x)~dx<\infty.$
\end{enumerate} 
\end{enumerate}
\end{assumption}
\begin{remark}
\begin{itemize}
\item Assumption $1)$ is very strong but essential in the proof. Deriving asymptotics under dependence between panels is for this test statistic if at all only possible under very restrictive assumptions. For example \cite{Horvath} add dependence through common time effects, which need to shrink with increasing $N$. Another possibility could be to impose a spatial dependence structure like in \cite{jirak2015uniform}.
\item Assumption $2)$ differentiates depending on whether the transformed random variables $Y_{i,t}$ are bounded or not. The former is the standard case for a robust procedure, since it requires bounded $\Psi-$ functions. The later contains the non-robust test with $\Psi_i(x)=x, i=1,\ldots,N$.
\item If one uses finitely many bounded $\Psi$-functions assumption 2) a) i) is always fulfilled regardless of the underlying distribution. This is a major advantage of the robust method over the non-robust one which depends on finite moments of order 8, see also \cite{Horvath}.
\item If $|\Psi_{i}(\epsilon_{i,1})|$ is not bounded one needs at least finite eighth moments of the transformed time series. If higher moment exists, one can weaken the dependence condition 2) b) ii). Assumption 2) b) iii) is quite technical and implies some finite moments of the error of the P-NED approximation. Using the P-NED concept is quite artificial in this case. Nevertheless we will prove the results also under these assumptions since it generalizes the results of \cite{Horvath} to non-linear processes.
\item Assumption 2 a)/b) ii) define that the dependence needs to decay uniformly. A similar condition can also be found in \cite{Horvath}.
\end{itemize}
\end{remark}


In the following theorem we derive the asymptotic behaviour of $W_{N,T}(x)$ if $v_i$ is known and $\sigma_i,\mu_i$ are fixed for $i=1,\ldots,N$. Let for this purpose $\stackrel{D[0,1]}{\rightarrow}$ denote weak convergence in the Skorokhod space $D[0,1].$
\begin{theorem}\label{T1}
Let $(\epsilon_{i,t})_{i=1,\ldots,N,~t=1,\ldots,T}$ fulfil Assumption 1 and furthermore
\begin{enumerate}[i)]
\item $N,T\rightarrow \infty$ with $N/T\rightarrow 0$
\item the $\Psi-$functions are Lipschitz continuous with constants $L_i$ which are bounded from above by $L$,
\item there exists $\delta$ such that $\inf_{i\in \mathbb{N}}v_i\geq \delta,$
\item there exists $\sigma_0$ such that $\inf_{i\in \mathbb{N}}\sigma_i\geq \sigma_0,$
\end{enumerate}
then
\begin{align*}
(W_N,T(x))_{x\in [0,1]}\stackrel{D[0,1]}{\rightarrow} (\Gamma(x))_{x\in [0,1]}. 
\end{align*}
\end{theorem}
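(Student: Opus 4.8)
The plan is to establish weak convergence in $D[0,1]$ in the usual two steps, convergence of the finite-dimensional distributions of $(W_{N,T}(x))_{x\in[0,1]}$ to those of $(\Gamma(x))_{x\in[0,1]}$ and tightness, after first building a single analytic tool on which both steps rest. Under $H_0$ one may replace $Y_{i,t}$ by $\tilde Y_{i,t}=Y_{i,t}-\mathbb{E}(Y_{i,1})$, since the centring constant cancels in $S_T^{(i)}(x)$, so that $S_T^{(i)}(x)$ is an explicit linear functional of $\tilde Y_{i,1},\dots,\tilde Y_{i,T}$. Because each $\Psi_i$ is Lipschitz with constant at most $L$, the process $(Y_{i,t})_t$ is P-NED on $(Z_{i,t})_t$ with the same approximating constants and regularity coefficients as $(\epsilon_{i,t})_t$, and its transferred error function is bounded above by $x\mapsto\Phi(x\sigma_0/L)$ uniformly in $i$, since $\sigma_i\ge\sigma_0$ and $\Phi$ is decreasing (the location $\mu_i$ plays no role, being cancelled). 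The tool to establish first is a family of moment inequalities for the normalised partial sums $T^{-1/2}v_i^{-1}\sum_{t=r+1}^{s}\tilde Y_{i,t}$, with constants not depending on $i$: in case a) of Assumption 1 one controls moments of every order directly from $|\tilde Y_{i,t}|\le 2c_1$ together with the uniform polynomial decay of $a_{k,i}$ and $\beta_{k,i}$, and in case b) one first truncates at an $i$-independent level governed by the eighth-moment bound and the integrability of $\Phi$ and then invokes a Rosenthal-type inequality for P-NED sequences on absolutely regular processes. Using also $v_i\ge\delta$, either route yields
\[
\mathbb{E}(S_T^{(i)}(x))^{8}\le C\qquad\text{and}\qquad
\mathbb{E}(S_T^{(i)}(x)-S_T^{(i)}(y))^{8}\le C\,(|x-y|\vee T^{-1})^{4}
\]
uniformly in $i$, $T$ and $x,y\in[0,1]$, the left side of the second bound being $0$ whenever $\lfloor Tx\rfloor=\lfloor Ty\rfloor$.

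For the finite-dimensional distributions, fix $0\le x_1<\dots<x_m\le1$ and write $W_{N,T}(x_j)=N^{-1/2}\sum_{i=1}^{N}\xi_{i,T}(x_j)$ with $\xi_{i,T}(x)=(S_T^{(i)}(x))^2-\lfloor Tx\rfloor(T-\lfloor Tx\rfloor)/T^2$; the summands are independent across $i$. A short computation with the long-run variances, using the lag-weighted summability of the autocovariances of $\tilde Y_i$ that follows from the uniform rates in Assumption 1, shows that $\mathbb{E}(S_T^{(i)}(x))^2$ agrees with the centring term up to $O(1/T)$ uniformly in $i$, so that the deterministic part $N^{-1/2}\sum_i\mathbb{E}\xi_{i,T}(x_j)$ is $O(\sqrt N/T)\to0$ by $N/T\to0$. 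To the centred independent triangular array $\xi_{i,T}-\mathbb{E}\xi_{i,T}$ one applies the Lyapunov central limit theorem via the Cram\'er--Wold device: the Lyapunov ratio is at most $C/N\to0$ by the uniform eighth-moment bound, while the limiting covariances are computed by letting $T\to\infty$ for fixed $i$, where the invariance principle for partial sums of P-NED sequences on absolutely regular processes gives $(S_T^{(i)}(x_1),\dots,S_T^{(i)}(x_m))\Rightarrow(B(x_1),\dots,B(x_m))$ for a standard Brownian bridge $B$, and uniform integrability of the squares (again from bounded eighth moments) yields $\mathrm{Cov}(\xi_{i,T}(x),\xi_{i,T}(y))\to\mathrm{Cov}(B(x)^2,B(y)^2)$ uniformly in $i$; evaluating the latter with the Gaussian fourth-moment formula and $\mathrm{Cov}(B(x),B(y))=\min(x,y)-xy$ then identifies the limiting covariance with that of $\Gamma$. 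Hence the finite-dimensional distributions of $W_{N,T}$ converge to those of $\Gamma$.

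Tightness in $D[0,1]$ follows from the moment criterion. After subtracting the bias, which is negligible at the required order by $N/T\to0$, $W_{N,T}(x)-W_{N,T}(y)$ is an $N^{-1/2}$-normalised sum of independent centred terms, each of which has fourth moment at most $C(|x-y|\vee T^{-1})^{2}$ by Cauchy--Schwarz and the increment bound above; Rosenthal's inequality for sums of independent variables then gives $\mathbb{E}(W_{N,T}(x)-W_{N,T}(y))^{4}\le C(|x-y|\vee T^{-1})^{2}$ uniformly in $N$ and $T$. Since in addition $W_{N,T}(x)=W_{N,T}(y)$ whenever no point of $\{j/T:\,1\le j\le T\}$ lies strictly between $x$ and $y$, so that any sufficiently short interval carries at most one jump, and since $W_{N,T}(0)=W_{N,T}(1)=0$ with a continuous limit, the standard moment criterion for tightness in $D[0,1]$ applies. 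Together with the finite-dimensional convergence this yields $(W_{N,T}(x))_{x\in[0,1]}\stackrel{D[0,1]}{\rightarrow}(\Gamma(x))_{x\in[0,1]}$.

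The step I expect to be the main obstacle is the first one, proving the moment inequalities \emph{with constants independent of} $i$. For bounded $\Psi_i$ this is comparatively routine, but under Assumption 1 b) one has to combine an $i$-independent truncation with Rosenthal- or Marcinkiewicz-type bounds for P-NED sequences on absolutely regular processes and to track carefully how the polynomial rates $b\cdot a/(a-7)$ and $b\cdot a/(a-8)$ and the integrability hypotheses on $\Phi$ propagate into the constants; this, rather than the central limit theorem or the tightness estimate, is where essentially all of the technical work lies, and it is exactly what the uniform ($\sup_{i\in\mathbb{N}}$) form of Assumption 1 is designed to permit.
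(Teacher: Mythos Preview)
Your overall architecture matches the paper's: first build uniform moment (Marcinkiewicz--Zygmund/Rosenthal) inequalities for partial sums of the P-NED transforms, then prove fidi convergence by a Lyapunov triangular-array CLT, then tightness via a fourth-moment criterion. You also correctly locate the main technical work in the uniform-in-$i$ moment bounds; these are exactly the paper's Propositions~8--11, and Step~4 of the paper's proof is essentially your Rosenthal-plus-Cauchy--Schwarz argument written out by hand.

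There is, however, one genuine gap. To identify the limiting covariance you invoke the functional CLT for each fixed $i$, $(S_T^{(i)}(x_1),\dots,S_T^{(i)}(x_m))\Rightarrow (B(x_1),\dots,B(x_m))$, and then say that uniform integrability (from the eighth-moment bound) gives $\mathrm{Cov}(\xi_{i,T}(x),\xi_{i,T}(y))\to\mathrm{Cov}(B(x)^2,B(y)^2)$ \emph{uniformly in $i$}. Uniform integrability does not deliver that: weak convergence plus UI gives convergence of moments \emph{for each fixed $i$}, but the Lyapunov step needs $N^{-1}\sum_{i=1}^N\mathrm{Cov}(\xi_{i,T}(x),\xi_{i,T}(y))$ to converge, and since $N=N(T)\to\infty$ you really need $\sup_i|\mathrm{Cov}(\xi_{i,T}(x),\xi_{i,T}(y))-\mathrm{Cov}(B(x)^2,B(y)^2)|\to 0$. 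A pointwise invariance principle plus a uniform UI bound does not exclude, say, errors of size $1/(T-i)$. The paper closes this by a direct expansion of $\mathbb{E}\big[(S_T^{(i)}(x))^2(S_T^{(i)}(y))^2\big]$ into nine blocks, repeatedly applying the covariance inequality (your ``tool'') to split fourth-order moments into products of autocovariances, and showing that all remainder terms are bounded by expressions depending only on the uniform rates in Assumption~1~2)~ii); hence $\sup_i|R_i|\to0$. If you want to avoid that calculation, you must instead invoke a \emph{quantitative} invariance principle (or quantitative moment convergence) whose rate depends only on the constants $c_1,c_2,b,\delta$ and on $\Phi$, and then argue that this rate is automatically uniform in $i$; merely citing the FCLT and UI is not enough.

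Apart from this point the proposal is sound and essentially the paper's proof, with the minor stylistic difference that you package the fourth-moment tightness bound via Rosenthal's inequality for independent summands, whereas the paper expands the fourth power explicitly and bounds the mixed terms using $|\mathbb{E}M_i|\le C/T$ together with $N/T$ bounded.
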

\begin{remark}
\begin{itemize}
\item The special case $\Psi_i(x)=x$ is treated by \cite{Horvath}. Note that the functions $(\Psi_i)_{i \in \mathbb{N}}$ do not interfere with the limiting distribution as long as they do not cut existing moments or lead to imploding variances.
\item Condition $i)$ permits $N$ to grow somewhat faster than $T.$ Intuitively one might think that a large $N$ always improves the asymptotics, but one needs to remember that the standardization by $\frac{\lfloor x T \rfloor(T-\lfloor x T \rfloor)}{T^2}$ and $v_i$ is only an approximation for large $T$. If $N$ grows too fast these small errors sum up to something which is not negligible. In \cite{Horvath} only $N/T^2\rightarrow 0$ is required and it is not clear why we need this somewhat stronger assumption here, which is by the way only necessary for to verify tightness. A possible reason is our more flexible dependence structure of the errors. However, since the following Theorems also require $N/T\rightarrow 0,$ we do not judge this more restrictive condition as a real drawback.
\item Assumption $ii)$ implies some regularity conditions for the $\Psi$-functions. Similar conditions can be found in one dimensional change point detection, see \cite{huskova2012}. For usual $\psi$ functions this is no restriction.
\item Condition $iii)$ states that the variability within one individual should not tend to 0 and this condition can also be found in \cite{Horvath}. Though one has to mention that the condition is here formulated based on the transformed time series. Inappropriate choices of $\mu_i$ and $\sigma_i$ can shrink all values to a constant, see Figure \ref{sigmai}, and so cause the assumption to be violated, though the original time series was permissible.
\end{itemize}
\end{remark}
To actually apply the testing procedure, one has to find estimators for $v_i,~i=1,\ldots,N$. As usual the asymptotics depend on the smoothness of the kernel $k$ in 0. We need the following assumptions on the kernel.
\begin{assumption}\label{assukernel}
Let $k:\mathbb{R}\rightarrow [-1,1]$ be a kernel function with
\begin{enumerate}[i)]
\item $k(0)=1,$
\item $k(-x)=x, x\geq 0,$
\item $k(x)=0,~|x|\geq 1$
\item $k(x)\leq R,~\forall x\in \mathbb{R},$
\item the first $m-1>0$ derivatives of $k$ in 0 are 0,
\item $\exists \epsilon,M>0$ such that $|k^{(m)}(x)|\leq M, ~|x|\leq \epsilon$, where $k^{(m)}$ denotes the $m-$th derivative of $k.$
\end{enumerate} 
\end{assumption}
\begin{remark}
\begin{itemize}
\item While conditions i) and ii) are very conventional, see for example section 9.3.2. in \cite{anderson}, requirements iii) and iv) are more particular, though fulfilled for most kernels.
\item The number $m$ characterizes the smoothness of the kernel in 0 which determines the bias of the kernel estimator, see also chapter 9.3.2 of \cite{anderson}. The popular Daniell, Blackman-Tukey, Hanning, Hamming and Parzen kernels fulfil condition v) with $m=2$, the flat top kernel (\ref{flattop}) even for arbitrary $m\in \mathbb{N}$. Since the Bartlett-Kernel is not differentiable in 0, it is not covered by the assumptions.
\item Condition vi) is fulfilled for all above mentioned kernels except the Bartlett kernel. Both assumption v) and vi) are additional requirements, not necessary in the usual one- or multidimensional time series context. We need them here to ensure uniform convergence over all individuals.
\end{itemize}
\end{remark}
The next theorem deals with the case where the theoretical long run variances $v_i$ are replaced by their kernel estimations (\ref{kernel}).  
\begin{theorem}\label{T2}
Let $Y_{i,t}=\Psi_i\left(\frac{X_{i,t}-\mu_i}{\sigma_i}\right)$ and denote
\begin{align*}
\tilde{S}^{(i)}_T(x)=\frac{1}{\sqrt{T}\hat{v}_i}\left(\sum_{t=1}^{\lfloor Tx \rfloor}Y_{i,t}-\frac{\lfloor Tx \rfloor}{T}\sum_{t=1}^T Y_{i,t}\right),~~x\in[0,1],
\end{align*}
the CUSUM-statistic with estimated long run variance using (\ref{kernel}) and
\begin{align*}
\tilde{W}_{N,T}(x)=\frac{1}{\sqrt{N}}\sum_{i=1}^N \left(\left(\tilde{S}_T^{(i)}(x)\right)^2-\frac{\lfloor x T \rfloor(T-\lfloor x T \rfloor)}{T^2}\right),~~x\in [0,1].
\end{align*}
the related panel-CUSUM-statistic. Let additionally to the Assumptions of Theorem \ref{T1} \begin{align}\label{kernrate}Nb_T/T\rightarrow 0 \mbox{~and~} N/b_T^{2s}\rightarrow 0
~~\mbox{where}~~
s=\begin{cases}
\min(m,b-1)&m\neq b-1\\
m-1&m=b-1\end{cases},\end{align}
then
\begin{align*}
\sup_{x \in [0,1]}|\tilde{W}_{N,T}(x)-W_{N,T}(x)|\rightarrow 0.
\end{align*}
\end{theorem}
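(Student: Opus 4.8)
The plan is to reduce everything to a single quantitative statement about the kernel estimator. Since $\tilde S^{(i)}_T(x)$ and $S^{(i)}_T(x)$ differ only through the normalising constant, we have $\tilde S^{(i)}_T(x)=(v_i/\hat v_i)\,S^{(i)}_T(x)$, the centring term $\lfloor xT\rfloor(T-\lfloor xT\rfloor)/T^2$ cancels in the difference, and using $(S^{(i)}_T(x))^2\ge 0$ together with the triangle inequality,
\begin{align*}
\sup_{x\in[0,1]}\bigl|\tilde W_{N,T}(x)-W_{N,T}(x)\bigr|
=\sup_{x\in[0,1]}\Bigl|\frac{1}{\sqrt N}\sum_{i=1}^N\Bigl(\frac{v_i^2}{\hat v_i^2}-1\Bigr)\bigl(S^{(i)}_T(x)\bigr)^2\Bigr|
\le\frac{1}{\sqrt N}\sum_{i=1}^N\Bigl|\frac{v_i^2}{\hat v_i^2}-1\Bigr|\,\sup_{x\in[0,1]}\bigl(S^{(i)}_T(x)\bigr)^2 .
\end{align*}
So it suffices to show that this last bound tends to $0$ in probability.

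Two ingredients are needed. First, a uniform fourth-moment bound for the CUSUM process, $\sup_i\mathbb E\bigl[\sup_{x}(S^{(i)}_T(x))^4\bigr]\le C$, which is of exactly the same type as the maximal inequalities underlying Theorem~\ref{T1} and follows from Assumption~\ref{Grundassum} (boundedness or eighth moments of $Y_{i,1}$, together with the uniformly summable mixing and approximation coefficients) and the Lipschitz property of the $\Psi_i$. Second, a uniform mean-squared-error bound for the kernel estimator,
\begin{align*}
\sup_i\,\mathbb E\bigl[(\hat v_i-v_i)^2\bigr]\le C\bigl(b_T^{-2s}+b_T/T\bigr).
\end{align*}
For the latter I would split into a squared bias and a variance term. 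The bias is $O(b_T^{-s})$ because the first $m-1$ derivatives of $k$ vanish at the origin (Assumption~\ref{assukernel} v)–vi)) and $\sum_h|h|^{m}\,|\mathrm{Cov}(Y_{i,1},Y_{i,1+h})|<\infty$ as long as $m<b-1$; the polynomial decay of $\beta_{k,i}$ forces the autocovariances to decay like $h^{-b}$, and this competition between the kernel smoothness $m$ and the exponent $b-1$ is precisely what produces the definition of $s$ and the boundary case $m=b-1$. The variance is $O(b_T/T)$ by the moment inequalities for absolutely regular and P-NED sequences applied to the sample autocovariances, in the spirit of \cite{philipp1986invariance}. All constants are uniform in $i$ thanks to the uniform hypotheses in Assumption~\ref{Grundassum}, Assumption~\ref{assukernel} and the bound $\inf_i\sigma_i\ge\sigma_0$.

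Given these two facts the rest is bookkeeping. A union bound and Markov's inequality give, for $\mathcal E_N:=\{\max_{i\le N}|\hat v_i-v_i|\le\delta/2\}$,
\begin{align*}
P(\mathcal E_N^{\,c})\le\frac{4N}{\delta^2}\sup_i\mathbb E[(\hat v_i-v_i)^2]\le C\bigl(N/b_T^{2s}+Nb_T/T\bigr)\to0
\end{align*}
by (\ref{kernrate}). On $\mathcal E_N$ one has $\hat v_i\ge\delta/2$, so with $\inf_i v_i\ge\delta$ (Theorem~\ref{T1} iii)) and the uniform boundedness of $v_i$ (from the summable autocovariances) one gets $|v_i^2/\hat v_i^2-1|=|v_i^2-\hat v_i^2|/\hat v_i^2\le C|v_i-\hat v_i|$ for every $i$. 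Hence $\sup_x|\tilde W_{N,T}-W_{N,T}|\,\mathbf{1}_{\mathcal E_N}\le \tfrac{C}{\sqrt N}\sum_{i=1}^N|v_i-\hat v_i|\sup_x(S^{(i)}_T(x))^2$, and by the Cauchy–Schwarz inequality together with the two bounds above,
\begin{align*}
\mathbb E\Bigl[\sup_x|\tilde W_{N,T}-W_{N,T}|\,\mathbf{1}_{\mathcal E_N}\Bigr]
\le\frac{C}{\sqrt N}\sum_{i=1}^N\bigl(\mathbb E(v_i-\hat v_i)^2\bigr)^{1/2}\bigl(\mathbb E\sup_x(S^{(i)}_T(x))^4\bigr)^{1/2}
\le C\sqrt N\,\bigl(b_T^{-s}+\sqrt{b_T/T}\bigr),
\end{align*}
which equals $C(\sqrt{N/b_T^{2s}}+\sqrt{Nb_T/T})\to0$ by (\ref{kernrate}). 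Combining this with $P(\mathcal E_N^{\,c})\to0$ through Markov's inequality yields $\sup_{x\in[0,1]}|\tilde W_{N,T}(x)-W_{N,T}(x)|\to0$ in probability.

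The main obstacle I expect is the uniform mean-squared-error bound for $\hat v_i$, and within it the variance term: one needs Rosenthal-type moment inequalities for the sample autocovariances of P-NED sequences with every constant tracked uniformly over the individuals, and the bias term forces the careful comparison of the kernel smoothness $m$ with the covariance-decay exponent coming from $b$ — this is exactly what makes the rate condition (\ref{kernrate}) and the case distinction in $s$ unavoidable. Once these are in place, the reduction and the probabilistic estimates above are routine.
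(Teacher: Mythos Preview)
Your approach is correct and reaches the same conclusion under the same rate conditions, but it is organised differently from the paper's argument, so a brief comparison is warranted.

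The paper does \emph{not} bound the supremum directly. Instead it adds and subtracts the centring term and writes
\[
W_{N,T}(x)-\tilde W_{N,T}(x)=G_1(x)+G_2(x),
\]
where $G_1$ carries the factor $\bigl(\frac{1}{\hat v_i^2}-\frac{1}{v_i^2}\bigr)\bigl[v_i^2(S^{(i)}_T(x))^2-v_i^2\cdot\text{centring}\bigr]$ and $G_2=\text{centring}\cdot\frac{1}{\sqrt N}\sum_i (v_i^2-\hat v_i^2)/\hat v_i^2$. It then proves \emph{pointwise} convergence of $G_1$ and $G_2$ (Cauchy--Schwarz for $G_1$, cross-sectional independence for $G_2$) and finally invokes a separate tightness argument for the difference process, using that $\hat v_i$ is bounded below on the good event. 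By contrast you take the supremum inside immediately and rely on the uniform fourth-moment bound $\sup_i\mathbb E\bigl[\sup_x (S^{(i)}_T(x))^4\bigr]\le C$. Your route is shorter and avoids the $G_1$/$G_2$ split as well as the separate tightness step; the price is this maximal moment inequality, which is \emph{not} established in the paper. It does follow from the Marcinkiewicz--Zygmund bound in Proposition~\ref{eq:Bernsteinbound} together with a M\'oricz/Serfling-type maximal inequality for partial sums (or, since $S^{(i)}_T$ is a step function, from $\sup_x|S^{(i)}_T(x)|\le \tfrac{2}{\sqrt T\,v_i}\max_{k\le T}|\sum_{t\le k}Y_{i,t}|$ and a maximal moment bound for $\max_k|\sum_{t\le k}Y_{i,t}|$), but you should state this explicitly rather than calling it ``of the same type as Theorem~\ref{T1}''. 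A second minor point: the natural object whose mean squared error you control is $\hat v_i^{\,2}-v_i^{\,2}$, not $\hat v_i-v_i$ (the flat-top kernel can even make $\hat v_i^{\,2}$ negative); on your good event the two are equivalent up to constants, but the write-up should reflect this. With these two clarifications your argument is complete and, arguably, more transparent than the paper's pointwise-plus-tightness route.
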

\begin{remark}
\begin{itemize}
\item The assumption $Nb_T/T\rightarrow 0$ seems to be stronger than necessary. In the same situation \cite{Horvath} only require $Nb_T^2/T^2\rightarrow 0$. The reason seems to be a difference in the proofs. While at one point \cite{Horvath} consider the $L_2$ norm, we choose the $L_1$ norm to make the proof somewhat more feasible. Remember that we allow for a little more complicated dependence structure.
\item As usual for kernel estimators the rate of $b_T$ should neither be too fast (large variance) nor too slow (large bias). This is reflected in (\ref{kernrate}). Furthermore, we see that from the theoretical point of view the flat-top kernel is preferable. In this case the bias condition $N/b_T^{2s}\rightarrow 0$ only depends on the strength of the serial dependence.
\end{itemize}
\end{remark}
Now we want to prove that the tuning parameters $\mu_i$ and $\sigma_i,~i=1\ldots,N$ can be chosen data-adaptively. Let therefore $\mu(\cdot)$ denote a univariate location measure, which implies $\mu(F^\star)=a+\mu(F)$ for $a,b\in \mathbb{R}$ and any one-dimensional distribution $F$ where $F^\star$ is the distribution of $a+bX$ with $X\sim F.$ Examples are the mean $\mu_{mean}(F)=\mathbb{E}(X)$ and the more robust median $\mu_{med}(F)=\mbox{median}(X).$ For a univariate scale measure $\sigma(\cdot)$ we demand $\sigma(F^\star)=|a|\sigma(F)$. Maybe the most popular representative is the standard deviation $\sigma_{SD}(F)=\sqrt{\mathbb{E}(X-\mathbb{E}(X))^2},$ which is of course not robust. A more appropriate choice here is the median absolute deviation $\sigma_{MAD}(F)=c_F\cdot \mbox{median}(|X-\mbox{median}(X)|)$, where often $c_F=1.4826$ so that $\sigma_{SD}(F)=\sigma_{MAD}(F)$ if $F$ is a normal distribution.\\
The related estimators $\hat{\mu}$ are usually the measure $\mu(\cdot)$ applied to the empirical distribution of the sample $(X_1,\ldots,X_T)$. Though there are sometimes differences. The standard deviation is for example often defined as $\sqrt{\frac{1}{T-1}\sum_{i=1}^T (X_i-\frac{1}{T}\sum_{i=1}^T X_i)^2}$ instead of $\sqrt{\frac{1}{T}\sum_{i=1}^T (X_i-\frac{1}{T}\sum_{i=1}^T X_i)^2}.$\\
From now on we define the standardization parameters $\mu_i$ and $\sigma_i$ as
\begin{align*}
\mu_i=\mu(F_i)~~\mbox{and}~~\sigma_i=\sigma(F_i),~i=1,\ldots,N,
\end{align*}
where $F_i$ is the marginal distribution of $X_{1,t}$ for some location measure $\mu(\cdot)$ and scale measure $\sigma(\cdot)$. The next Theorem gives conditions under which it is asymptotically negligible whether one knows these theoretical values or estimates them, as long as the estimators converge fast enough.

\begin{theorem}\label{storung}
Denote by
\begin{align*}
\check{S}^{(i)}_T(x)=\frac{1}{\sqrt{T}\check{v}_i}\left(\sum_{t=1}^{\lfloor Tx \rfloor}\Psi_i\left(\frac{X_{i,t}-\hat{\mu}_i}{\hat{\sigma}_i}\right)-\frac{\lfloor Tx \rfloor}{T}\sum_{t=1}^T \Psi_i\left(\frac{X_{i,t}-\hat{\mu}_i}{\hat{\sigma}_i}\right)\right),~~x\in[0,1]
\end{align*}
the CUSUM-statistic with estimated location and scale parameter, $\check{v}_i$ the long run variance estimation based on $\hat{\mu}_i$ and $\hat{\sigma}_i$ and
\begin{align*}
\check{W}_{N,T}(x)=\frac{1}{\sqrt{N}}\sum_{i=1}^N \left(\left(\check{S}_T^{(i)}(x)\right)^2-\frac{\lfloor x T \rfloor(T-\lfloor x T \rfloor)}{T^2}\right),~~x\in [0,1].
\end{align*}
the related panel-CUSUM-statistic.
Let additionally to the Assumptions of Theorem 4 hold that
\begin{enumerate}[i)]
\item $\Psi_i$ is $m+1$ times continuously differentiable with derivatives 
$\Psi_i^{(1)},\ldots,\Psi_i^{(m+1)}$ and there exists $c$ such that 
$\sup_{i\in \mathbb{N}}|\Psi^{(k)}_i(x)x^k|_{\infty}<c,~\mbox{for}~k=1,\ldots,m,$
\item there exist $d,\epsilon,\delta>0$ such that $\sup_{i\in \mathbb{N}}|\Psi_i^{(m+1)}(x(1+y)+z)x^k|_\infty\leq d~\forall |y|\leq \epsilon$, $|z|\leq \delta$, $k\leq m+1,$
\item there exists $c_1,c_2,\alpha,\beta>0$ such that for all $T\in \mathbb{N}$:
\begin{align*}
\sup_{i\in \mathbb{N}}P(|\hat{\mu}_i-\mu_i|>c_1T^{-\beta})\leq c_2T^{-\alpha}\mbox{~~and~~}\sup_{i\in \mathbb{N}}P(|\hat{\sigma}_i-\sigma_i|>c_1T^{-\beta})\leq c_2T^{-\alpha},
\end{align*}
\item $N/T^{2\beta}\rightarrow 0,$ $N/T^\alpha \rightarrow 0$ and $N/T^{2\beta(m+1)-1)}\rightarrow 0,$
\end{enumerate}
then \begin{align*}
\sup_{x \in [0,1]}|\check{W}_{N,T}(x)-W_{N,T}(x)|\rightarrow 0.
\end{align*}
\end{theorem}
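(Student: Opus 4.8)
The plan is to reduce everything to Theorem \ref{T2} and to a Taylor expansion of $\Psi_i$ in the standardization parameters. Write $u_{i,t}=(X_{i,t}-\mu_i)/\sigma_i$, $\tau_i=(\hat\mu_i-\mu_i)/\sigma_i$, $\rho_i=(\hat\sigma_i-\sigma_i)/\sigma_i$, so that $\Psi_i((X_{i,t}-\hat\mu_i)/\hat\sigma_i)=\Psi_i((u_{i,t}-\tau_i)/(1+\rho_i))$ and $Y_{i,t}=\Psi_i(u_{i,t})$. Denote by $A^{(i)}(x)$ and $B^{(i)}(x)$ the mean-centred CUSUM sums $T^{-1/2}(\sum_{t\le Tx}V_{i,t}-T^{-1}\lfloor Tx\rfloor\sum_{t\le T}V_{i,t})$ for $V_{i,t}=\Psi_i((u_{i,t}-\tau_i)/(1+\rho_i))$ and $V_{i,t}=Y_{i,t}$ respectively, so that $\check S_T^{(i)}=A^{(i)}/\check v_i$ and $\tilde S_T^{(i)}=B^{(i)}/\hat v_i$. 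By Assumption iii) and a union bound, $P(E_T^c)\le 2Nc_2T^{-\alpha}\to0$ for the event $E_T=\bigcap_{i\le N}\{|\hat\mu_i-\mu_i|\le c_1T^{-\beta},\,|\hat\sigma_i-\sigma_i|\le c_1T^{-\beta}\}$, on which $\max_{i\le N}(|\tau_i|,|\rho_i|)\le c_1T^{-\beta}/\sigma_0$; so I may argue on $E_T$. Since $\sup_x|\tilde W_{N,T}(x)-W_{N,T}(x)|\to0$ by Theorem \ref{T2}, it suffices to bound $\sup_x|\check W_{N,T}(x)-\tilde W_{N,T}(x)|$, and
\[
(\check S_T^{(i)})^2-(\tilde S_T^{(i)})^2=\check v_i^{-2}\big((A^{(i)})^2-(B^{(i)})^2\big)+(B^{(i)})^2\big(\check v_i^{-2}-\hat v_i^{-2}\big),
\]
which splits $\check W_{N,T}-\tilde W_{N,T}$ into a location/scale part and a long-run-variance part.

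For the location/scale part I would Taylor-expand $(\tau,\rho)\mapsto\Psi_i((u-\tau)/(1+\rho))$ about $(0,0)$ up to order $m$ with integral remainder of order $m+1$: by the chain rule the degree-$k$ term is a sum of monomials $\tau_i^a\rho_i^b$ ($a+b=k$) times functions $u\mapsto\Psi_i^{(j)}(u)u^\ell$ with $\ell\le j\le k$, which are bounded uniformly in $i$ by Assumptions i)--ii), while the remainder is controlled by Assumption ii) and, on $E_T$, bounded by $C(|\tau_i|+|\rho_i|)^{m+1}$ uniformly in $i,t$. Hence $D_{i,t}:=V_{i,t}-Y_{i,t}=\sum_{k=1}^m\sum_{\mathrm{finite}}q_{i,k}\,\phi_{i,k}(u_{i,t})+R_{i,t}$ with coefficients $q_{i,k}$ that are constant in $t$ with $|q_{i,k}|=O_P(T^{-\beta k})$ uniformly on $E_T$, bounded functions $\phi_{i,k}$, and $\sup_{t}|R_{i,t}|=O_P(T^{-\beta(m+1)})$. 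Because the CUSUM is linear and centred, $A^{(i)}(x)-B^{(i)}(x)=\sum q_{i,k}\,T^{-1/2}C_x[\phi_{i,k}(u_{i,\cdot})]+T^{-1/2}C_x[R_{i,\cdot}]$, where $C_x$ denotes the centred partial-sum functional. Each $\phi_{i,k}(u_{i,\cdot})$ is a bounded functional of the error process, which is P-NED on an absolutely regular process, so the maximal inequality underlying the tightness proof of Theorem \ref{T1} gives $\mathbb{E}\sup_x|T^{-1/2}C_x[\phi_{i,k}(u_{i,\cdot})]|^2\le K$ with $K$ uniform in $i\le N$, while $\sup_x|T^{-1/2}C_x[R_{i,\cdot}]|\le 2\sqrt T\sup_t|R_{i,t}|=O_P(T^{1/2-\beta(m+1)})$. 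Consequently $\sum_{i\le N}\sup_x(A^{(i)}(x)-B^{(i)}(x))^2=O_P(N/T^{2\beta}+N/T^{2\beta(m+1)-1})$.

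Then I assemble by Cauchy--Schwarz in $i$. On $E_T$ one has $\check v_i,\hat v_i\ge\delta/2$ for $T$ large (uniform consistency of the kernel estimator, Theorem \ref{T2}), $v_i\le V$ uniformly (bounded/finite-moment $Y_{i,1}$ and summable covariances), $\sup_x\sum_{i\le N}(B^{(i)}(x))^2=O_P(N)$ (from Theorem \ref{T1}), hence also $\sup_x\sum_{i\le N}(A^{(i)}(x))^2=O_P(N)$. Therefore the location/scale part is at most $(\delta/2)^{-2}N^{-1/2}\big(\sum_i\sup_x(A^{(i)}-B^{(i)})^2\big)^{1/2}\big(\sum_i\sup_x(A^{(i)}+B^{(i)})^2\big)^{1/2}=O_P\big(\sqrt{N/T^{2\beta}}+\sqrt{N/T^{2\beta(m+1)-1}}\big)$, which tends to $0$ by Assumption iv). For the long-run-variance part, expanding $\check\gamma_i(h)-\hat\gamma_i(h)$ in the same way gives $\check\gamma_i(h)-\hat\gamma_i(h)=\tau_i G_{i,1}(h)+\rho_i G_{i,2}(h)+O_P(\tau_i^2+\rho_i^2)$ with $G_{i,1},G_{i,2}$ concentrating around summable sequences, so after summing against $k(\cdot/b_T)$ one obtains $\sup_{i\le N}|\check v_i-\hat v_i|\to0$ at a rate for which $\sqrt N\,\sup_i|\check v_i-\hat v_i|\to0$ under the rate conditions of Theorems \ref{T2} and \ref{storung}; since $|\check v_i^{-2}-\hat v_i^{-2}|\le C|\check v_i-\hat v_i|$ on $E_T$ and $N^{-1/2}\sum_i(B^{(i)})^2=O_P(\sqrt N)$, the long-run-variance part is $O_P(\sqrt N\,\sup_i|\check v_i-\hat v_i|)\to0$. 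Together with Theorem \ref{T2} this yields $\sup_x|\check W_{N,T}(x)-W_{N,T}(x)|\to0$.

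The step I expect to be the main obstacle is the maximal inequality uniform in $i$ for the functions $\phi_{i,k}(u_{i,t})$, which are of the form $\Psi_i^{(j)}(u_{i,t})u_{i,t}^\ell$ and are bounded by Assumption i) but not in general Lipschitz; transferring the near-epoch dependence of the errors through such a non-Lipschitz map, and obtaining a variance/sup bound whose constant does not depend on $i$, will presumably require exploiting boundedness together with the absolute regularity of the underlying process (covariance/Davydov inequalities and a Menshov--Rademacher-type chaining) rather than the P-NED property directly, mirroring the moment-inequality machinery already used for Theorem \ref{T1}. A secondary technical nuisance is the long-run-variance estimator $\check v_i$: since it is now computed from the perturbed series, one must combine its consistency with the smallness of $D_{i,t}$, keep every bound uniform in $i$, track the interplay of $b_T$ with $T^{-\beta}$ carefully, and ensure $\check v_i$ stays bounded away from $0$.
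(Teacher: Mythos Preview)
Your architecture matches the paper's: restrict to the high-probability event where all $|\hat\mu_i-\mu_i|,|\hat\sigma_i-\sigma_i|\le c_1T^{-\beta}$, Taylor-expand $\Psi_i$ to order $m$, and treat the location/scale perturbation and the long-run-variance perturbation separately. The paper packages the location/scale part slightly differently---it freezes $(\hat\mu_i,\hat\sigma_i)$ to deterministic values $(d,e)$, then proves pointwise convergence plus a Billingsley tightness bound rather than invoking a maximal inequality in $x$---but your route via $\sup_x$ is a legitimate alternative. Your worry about the functions $\phi_{i,k}(x)=\Psi_i^{(j)}(x)x^\ell$ being non-Lipschitz is unfounded: under Assumptions i)--ii) their derivatives $\Psi_i^{(j+1)}(x)x^\ell+\ell\Psi_i^{(j)}(x)x^{\ell-1}$ are uniformly bounded (take $y=z=0$ in ii) for the case $j=m$), hence they are Lipschitz and the P-NED property transfers directly; the paper uses exactly this without comment.

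The step that does not work as written is your long-run-variance bound. You control it by $\sqrt N\cdot\sup_{i\le N}|\check v_i-\hat v_i|$, but the only tool available is a second-moment bound of the type $\mathbb{E}(\check v_i^2-\hat v_i^2)^2\le C(T^{-2\beta}+T^{-2\beta(m+1)+1})$, and a union bound over $i\le N$ then yields only $\sup_i|\check v_i^2-\hat v_i^2|=O_P\big(N^{1/2}(T^{-\beta}+T^{-\beta(m+1)+1/2})\big)$; your product therefore becomes $O_P\big(N(T^{-\beta}+\cdots)\big)$, which is \emph{not} forced to vanish by $N/T^{2\beta}\to0$ and $N/T^{2\beta(m+1)-1}\to0$. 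The repair is to avoid the supremum over $i$: the paper (its Step~3, which derives the second-moment bound above and then mimics the proof of Theorem~\ref{T2}) controls the contribution in $L^1$ by
\[
\frac{1}{\sqrt N}\sum_{i=1}^N\{\mathbb{E}(\check v_i^2-\hat v_i^2)^2\}^{1/2}\{\mathbb{E}[(\,\cdot\,)^2]\}^{1/2}\le C\sqrt N\,(T^{-\beta}+T^{-\beta(m+1)+1/2}),
\]
which tends to zero precisely under Assumption~iv). Equivalently, in your notation, apply Cauchy--Schwarz in $i$ to $N^{-1/2}\sum_i(B^{(i)})^2|\check v_i^{-2}-\hat v_i^{-2}|$ rather than pulling out the supremum.
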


\begin{remark}
\begin{itemize}
\item Assumption i) is often not fulfilled for standard $\Psi-$functions. The Huber $\Psi-$function is for example only continuous but in two points not differentiable. This assumption may be relaxed to the case where the function is differentiable in all but a finite number of points at least as the error distribution is continuous. This would be fulfilled for all common $\Psi-$functions. Nevertheless the current condition is not a restriction in practice, since one can always find $m+1$-times differentiable modifications which hardly differ from the original $\Psi-$function.\\The existence of the upper bounds $c$ and $d$ are also no substantial restriction, since $\Psi-$functions are usually constant for large values (and so its derivatives are 0).
\item Assumption iii) is the main restriction of the theorem. We will show in the next Theorem that one can find such tail probability bounds for median and MAD. Note that the parameter $\alpha$ and $\beta$ determine the rate of convergence as can be seen in Assumption iv).
\item Additionally to the persisting conditions on the ratio of $N$ and $T$, we get the assumptions $N/T^{2\beta}\rightarrow 0,$ $N/T^\alpha \rightarrow 0$ and $N/T^{\beta(m+1)-1}\rightarrow 0.$ If exponential inequalities for $\hat{\mu}_i$ and $\hat{\sigma}_i,~i=1,\ldots,N$ are available and $\Psi_i$ is 2 times continuously differentiable, $i=1,\ldots,N$ this boils down to $N/T^{1-\epsilon}\rightarrow 0$ for some $\epsilon> 0.$ So $T$ has to grow a little faster than $N.$
\end{itemize}
\end{remark}

Finally we want to investigate if our proposed estimators median and MAD fulfil assumption \textit{iii)} of Theorem \ref{storung} and how the parameters $\alpha$ and $\beta$ depend on the properties of the observed processes. We formulate the result for a one dimensional time series $(X_t)_{t\in \mathbb{N}}$ respectively its transformation $(Y_t)_{t\in \mathbb{N}}$ with $Y_t=|X_t-\mu_{med}|,~t\in \mathbb{N}.$
\begin{theorem}
Let $(X_t)_{t \in \mathbb{N}}$ be stationary and P-NED on an absolutely regular process $(Z_{t})_{t\in \mathbb{Z}}$ with approximation constants $(a_k)_{k\in \mathbb{N}}$, functionals $(f_k)_{k\in \mathbb{N}}$, error function $\Phi$ and absolutely regularity coefficients $(\beta_k)_{k\in \mathbb{N}}.$ Furthermore 
\begin{enumerate}[i)]
\item there exist $\kappa>0$ and $p\in \mathbb{N}$ even such that
$\sum_{k=1}^\infty (a_k\Phi(a_k^\kappa)+a_k^\kappa)k^p <\infty ~~\mbox{and}~~\sum_{k=1}^\infty \beta_k k^p < \infty,$
\item $X_1$ is continuous with bounded density $f$ and there exist $M,\epsilon>0$ with  $f(x)\geq M ~\mbox{for}~x\in(\mu_{med}-\epsilon,\mu_{med}+\epsilon),$
\end{enumerate}
then there exists $c$ such that\begin{align}\label{medianaus}
P(|\hat{\mu}_{med}-\mu_{med}|>T^{-\beta})\leq c T^{-p/2+p\beta}~\forall T\in \mathbb{N}.\end{align}
If additionally
\begin{enumerate}[iii)]
\item $Y_1$ is continuous with bounded density $g$ and there exist $M,\epsilon>0$ with  $g(x)\geq M ~\mbox{for}~x\in(\sigma_{MAD}/c_F-\epsilon,\sigma_{MAD}/c_F+\epsilon),$
\end{enumerate}
then there exists $c$ such that\begin{align*}
P(|\hat{\sigma}_{MAD}-\sigma_{MAD}|>T^{-\beta})\leq c T^{-p/2+p\beta}~\forall T\in \mathbb{N}.
\end{align*}
\end{theorem}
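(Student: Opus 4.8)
I would first reduce the deviation of the sample median to a tail bound for a partial sum of bounded indicators. Fix $0<\delta\le\epsilon$. Since $X_1$ has a density, $F$ is continuous, so $F(\mu_{med})=1/2$ and, by the lower bound on $f$ in ii), $F(\mu_{med}+\delta)-\tfrac12\ge M\delta$ and $\tfrac12-F(\mu_{med}-\delta)\ge M\delta$. The event $\{\hat\mu_{med}>\mu_{med}+\delta\}$ implies that at most $T/2$ of the observations lie at or below $\mu_{med}+\delta$; hence, writing $\xi_t:=\mathbf{1}_{\{X_t\le \mu_{med}+\delta\}}-F(\mu_{med}+\delta)$,
\begin{align*}P\big(\hat\mu_{med}>\mu_{med}+\delta\big)\le P\Big(\sum_{t=1}^T\xi_t\le -M\delta T\Big),\end{align*}
and symmetrically for $\{\hat\mu_{med}<\mu_{med}-\delta\}$. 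So it suffices to bound $P(|\sum_{t=1}^T\xi_t|>M\delta T)$.

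Next I would check that the indicator sequence is again P-NED on the same underlying process. For $W_k:=f_k(Z_{-k},\dots,Z_k)$ and any $\eta>0$, $\{\mathbf{1}_{\{X_0\le c\}}\ne \mathbf{1}_{\{W_k\le c\}}\}\subset\{|X_0-W_k|>\eta\}\cup\{X_0\in[c-\eta,c+\eta]\}$, so by (\ref{PNEDprop}) and boundedness of $f$, $P(\mathbf{1}_{\{X_0\le c\}}\ne\mathbf{1}_{\{W_k\le c\}})\le a_k\Phi(\eta)+2\eta\|f\|_\infty$; choosing $\eta=a_k^\kappa$ shows that $(\mathbf{1}_{\{X_t\le c\}})_t$, and hence the centred sequence $(\xi_t)_t$, is P-NED on $(Z_t)_{t\in\mathbb Z}$ with bounded approximating functionals and approximating constants $\tilde a_k\le a_k\Phi(a_k^\kappa)+2a_k^\kappa\|f\|_\infty$, uniformly in $c$. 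By i), $\sum_k\tilde a_k k^p<\infty$ and $\sum_k\beta_k k^p<\infty$. Since $p$ is even, the moment inequality for partial sums of bounded centred P-NED sequences on absolutely regular processes (the type of bound going back to \citet{philipp1986invariance} and used by \citet{dehling2012testing}, whose hypotheses are guaranteed by these two summability conditions) yields a constant $C$, independent of $T$ and of $c\in[\mu_{med}-\epsilon,\mu_{med}+\epsilon]$, with $\mathbb E|\sum_{t=1}^T\xi_t|^p\le C T^{p/2}$. Markov's inequality then gives $P(|\sum_{t=1}^T\xi_t|>M\delta T)\le CM^{-p}\delta^{-p}T^{-p/2}$, and taking $\delta=T^{-\beta}$ produces the bound $c\,T^{-p/2+p\beta}$ for all $T$ with $T^{-\beta}\le\epsilon$; for the finitely many remaining $T$ the trivial bound $1$ is $\le c\,T^{-p/2+p\beta}$ after enlarging $c$. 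This is (\ref{medianaus}).

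For the MAD, set $Y_t=|X_t-\mu_{med}|$. Being a $1$-Lipschitz image of $X_t$, it is P-NED on the same $(Z_t)$ with unchanged approximating constants and error function; its density $g$ is bounded (by $2\|f\|_\infty$) and, by iii), bounded below by $M$ near its population median $\sigma_{MAD}/c_F$. Applying the previous two steps to $(Y_t)_t$ gives $P(|\mathrm{med}(Y_1,\dots,Y_T)-\sigma_{MAD}/c_F|>\delta)\le c\,\delta^{-p}T^{-p/2}$. Since $\hat\sigma_{MAD}=c_F\,\mathrm{med}(|X_1-\hat\mu_{med}|,\dots,|X_T-\hat\mu_{med}|)$ and, deterministically, $\big||X_t-\hat\mu_{med}|-|X_t-\mu_{med}|\big|\le|\hat\mu_{med}-\mu_{med}|$ for all $t$, each order statistic of $(|X_t-\hat\mu_{med}|)_t$ differs from that of $(Y_t)_t$ by at most $|\hat\mu_{med}-\mu_{med}|$, hence so do their medians. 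Therefore
\begin{align*}P\big(|\hat\sigma_{MAD}-\sigma_{MAD}|>T^{-\beta}\big)\le P\big(|\hat\mu_{med}-\mu_{med}|>\tfrac{T^{-\beta}}{2c_F}\big)+P\big(|\mathrm{med}(Y_1,\dots,Y_T)-\tfrac{\sigma_{MAD}}{c_F}|>\tfrac{T^{-\beta}}{2c_F}\big),\end{align*}
and both terms are $\le c\,T^{-p/2+p\beta}$ by (\ref{medianaus}) and the bound just obtained (absorbing the constant $1/(2c_F)$).

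The main obstacle is the second step: because the empirical distribution enters through the non-Lipschitz map $x\mapsto\mathbf{1}_{\{x\le c\}}$, the Lipschitz stability of P-NED is not directly applicable, and one must use the bounded density to recover P-NED with the weaker approximating constants $a_k\Phi(a_k^\kappa)+a_k^\kappa$ -- which is exactly why assumption i) is stated in that form -- and then have available an $L_p$-moment bound of order $T^{p/2}$ valid uniformly over the relevant levels $c$. The remaining steps are routine order-statistic manipulations and constant bookkeeping.
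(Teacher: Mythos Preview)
Your proposal is correct and follows essentially the same route as the paper: show that the indicator process inherits P-NED with approximating constants of the form $a_k\Phi(a_k^\kappa)+a_k^\kappa$, apply the Marcinkiewicz--Zygmund type moment bound for bounded P-NED sequences (the paper's Proposition~\ref{eq:Bernsteinbound}) to get $\mathbb E|\sum\xi_t|^p\le C T^{p/2}$, and conclude via Markov's inequality. The only noticeable difference is in the MAD step: the paper conditions on $\{|\hat\mu_{med}-\mu_{med}|<\epsilon/2\}$ to replace $\hat\mu$ by $\mu$ inside the indicators, whereas you use the cleaner deterministic fact that the sample median is $1$-Lipschitz in the sup-norm of the data, yielding $|\mathrm{med}(|X_t-\hat\mu|)-\mathrm{med}(Y_t)|\le|\hat\mu-\mu|$ directly; both arguments lead to the same two-term bound.
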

 
\begin{remark}
\begin{itemize}
\item Under independence there exist exponential inequalities for the median and the MAD, see \cite{serfling2009exponential}. We are not aware such exponential inequalities under dependence, but looking at the proof in \cite{serfling} or \cite{serfling2009exponential} it only depends on the existence of a Hoeffding inequality, which is proven under various dependence conditions, see \cite{kontorovich2008concentration} or \cite{kallabis2006exponential}. However, proving such inequalities under the P-NED condition is not the objective of this paper.
\item The condition of a continuous distribution might be relaxed if one slightly redefines median and MAD, see also \cite{serfling2009exponential}.
\end{itemize}
\end{remark}
\section{Simulation}
In this section we want to evaluate our proposed test statistic concerning two aspects: The size under the null hypothesis and the power under the alternative. There are quite different simulation scenarios possible, since we allow for quite diverse serial dependence structures and distributions of the individuals. For thee reason of comparison we orientate ourselves at the AR(1) models considered in \cite{Horvath}
\begin{align*}
X_{i,t}=\rho\cdot X_{i,t-1}+a_{i,t},~t=1,\ldots,T,~i=1,\ldots,N.
\end{align*}
We compare the non robust panel CUSUM statistic proposed by \cite{Horvath} with our robust alternative. We use a two times continuously differentiable version of Hubers-$\Psi-$function which is shown in Figure \ref{hubersmooth}. Furthermore we choose the flat top kernel (\ref{flattop}) with a bandwidth $b_T=T^{0.4}$ for both estimators. Note that \cite{Horvath} originally use a rather small value of $b_T$ varying between 2.5 and 5 which is, however, not appropriate in our simulations where we also look at relatively large $T$. Simulation results are based on 1000 runs each.\\
We first look at finite sample properties under the null hypothesis. In the case of $\rho=0$, where we have no serial dependence, both tests need at least $T=200$ to work properly as one can see in Table \ref{H01}. Furthermore the simulations confirm the theoretical results that $N$ must not grow much faster than $T$. We also notice that there is basically no difference between the tests under Gaussianity.\\
\begin{figure}[H]
\begin{center} 
\includegraphics[width=0.35\textwidth]{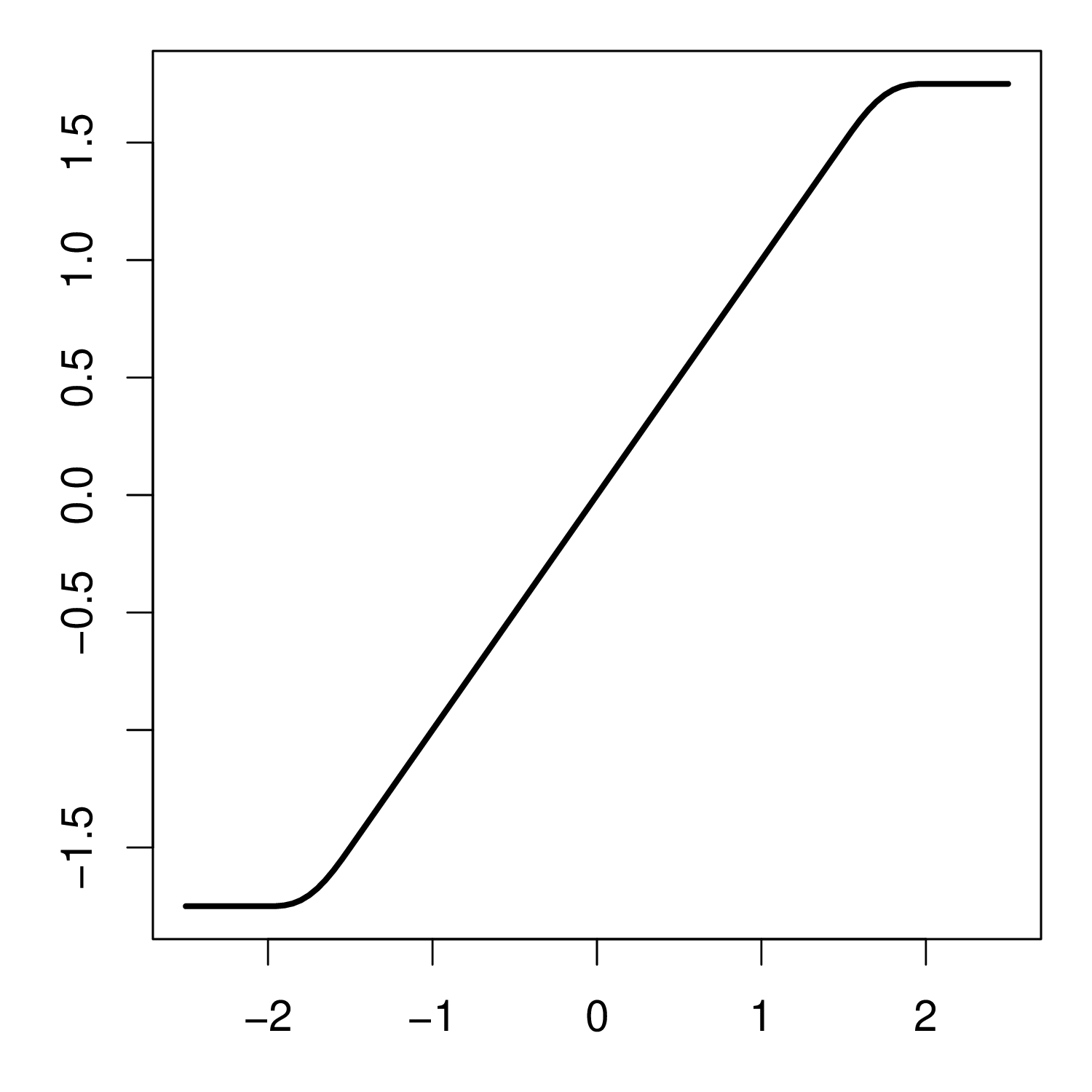}
\caption{Two times continuously differentiable version of Hubers $\Psi-$function used in the simulations.\label{hubersmooth}}
\end{center}
\end{figure}
\begin{table}
\begin{center}
\begin{tabular}{l|ccccc|ccccc}
&\multicolumn{5}{c|}{non-robust test}&\multicolumn{5}{|c}{robust test}\\
N T&50&100&200&400&800&50&100&200&400&800\\\hline
50&0.33&0.13&0.07&0.07&0.04&0.33&0.14&0.07&0.06&0.04\\
100&0.52&0.14&0.06&0.07&0.06&0.55&0.15&0.07&0.06&0.06\\
200&0.77&0.21&0.11&0.07&0.04&0.79&0.22&0.11&0.08&0.04\\
400&0.97&0.40&0.18&0.08&0.08&0.97&0.41&0.17&0.07&0.08\\
800&1.00&0.65&0.28&0.13&0.06&1.00&0.66&0.28&0.13&0.07
\end{tabular}
\caption{Empirical size under $\rho=0$, normal innovations and a significance level of 0.05.\label{H01}}
\end{center}
\end{table}
In case of a more heavy tailed distribution like a t-distribution with 3 degrees of freedom, results do not change much, as one can see in Table \ref{H02}. The non robust test seems to work well under the null hypothesis though its assumptions (finite 8-th moments) are violated.\\
\begin{table}[H]
\begin{center}
\begin{tabular}{l|ccccc|ccccc}
&\multicolumn{5}{c|}{non-robust test}&\multicolumn{5}{|c}{robust test}\\
N T&50&100&200&400&800&50&100&200&400&800\\\hline
50&0.30&0.11&0.05&0.08&0.06&0.34&0.10&0.05&0.08&0.06\\
100&0.47&0.15&0.08&0.06&0.06&0.54&0.16&0.08&0.06&0.06\\
200&0.74&0.19&0.10&0.06&0.05&0.77&0.25&0.11&0.07&0.06\\
400&0.97&0.36&0.12&0.08&0.06&0.97&0.41&0.15&0.09&0.06\\
800&1.00&0.65&0.23&0.14&0.08&1.00&0.68&0.26&0.15&0.08\\
\end{tabular}
\caption{Empirical size under $\rho=0,~t_3$ distributed innovations and a significance level of 0.05.\label{H02}}
\end{center}
\end{table}
Somewhat surprisingly the results get better under serial correlation ($\rho=0.5$) where we get reasonable empirical sizes already for $T=100.$ This can be partly explained by the relatively large bandwidth which better fits the case $\rho=0.5$ than $\rho=0.$
\begin{table}[H]
\begin{center}
\begin{tabular}{l|ccccc|ccccc}
&\multicolumn{5}{c|}{non-robust test}&\multicolumn{5}{|c}{robust test}\\
N T&50&100&200&400&800&50&100&200&400&800\\\hline
50&0.07&0.06&0.05&0.04&0.03&0.08&0.05&0.05&0.05&0.03\\
100&0.12&0.07&0.04&0.04&0.05&0.12&0.07&0.04&0.04&0.06\\
200&0.24&0.10&0.05&0.04&0.04&0.26&0.11&0.05&0.05&0.04\\
400&0.53&0.16&0.07&0.06&0.04&0.54&0.17&0.07&0.05&0.04\\
800&0.90&0.36&0.13&0.07&0.05&0.90&0.37&0.13&0.07&0.05
\end{tabular}
\caption{Empirical size under $\rho=0.5$, normal innovations and a significance level of 0.05.\label{H03}}
\end{center}
\end{table}
It is maybe preferable to use a data dependent bandwidth $b_T$ as proposed in \cite{andrews1991heteroskedasticity} or \cite{politis2011higher}. In this case one can either calculate a global bandwidth which is used for every individual or customized bandwidths $b_{T,i},~i=1,\ldots,N$. For the latter on has to ensure that the conditions (\ref{kernrate}) are also fulfilled for the infimum respectively the supremum of the bandwidths. However, automatic bandwidth choices are also questioned since they can produce very large bandwidths under the alternative of a level shift which considerably decrease the power of CUSUM tests. They can even lead to non monotonic power curves, see for example \cite{vogelsang1999sources} and \cite{crainiceanu2007nonmonotonic}.\\
Now we turn our focus at the behaviour under the alternative. Therefore we choose $\rho=0.25$, set $T=400,$ $N=200$ and add a jump at $t_0=200.$ Its height is generated for each individual independently by a normal distribution with mean 0 and standard deviation $\Delta$ varying between 0 and 0.2. Results for normal and t-distributed innovations with various degrees of freedom can be found in Figure \ref{H11}. We see that there is no visual difference between the tests under the alternative in case of normal data. This changes as the innovations get more heavy tailed. The robust test statistic performs superior in case of $t_5$ distributed innovations. This advantage increases if one looks at more heavy tailed distributions. Under $t_1$ innovations the power of the non robust test even does not exceed its size.\\
\begin{figure}[H]
\begin{center} 
\includegraphics[width=0.99\textwidth]{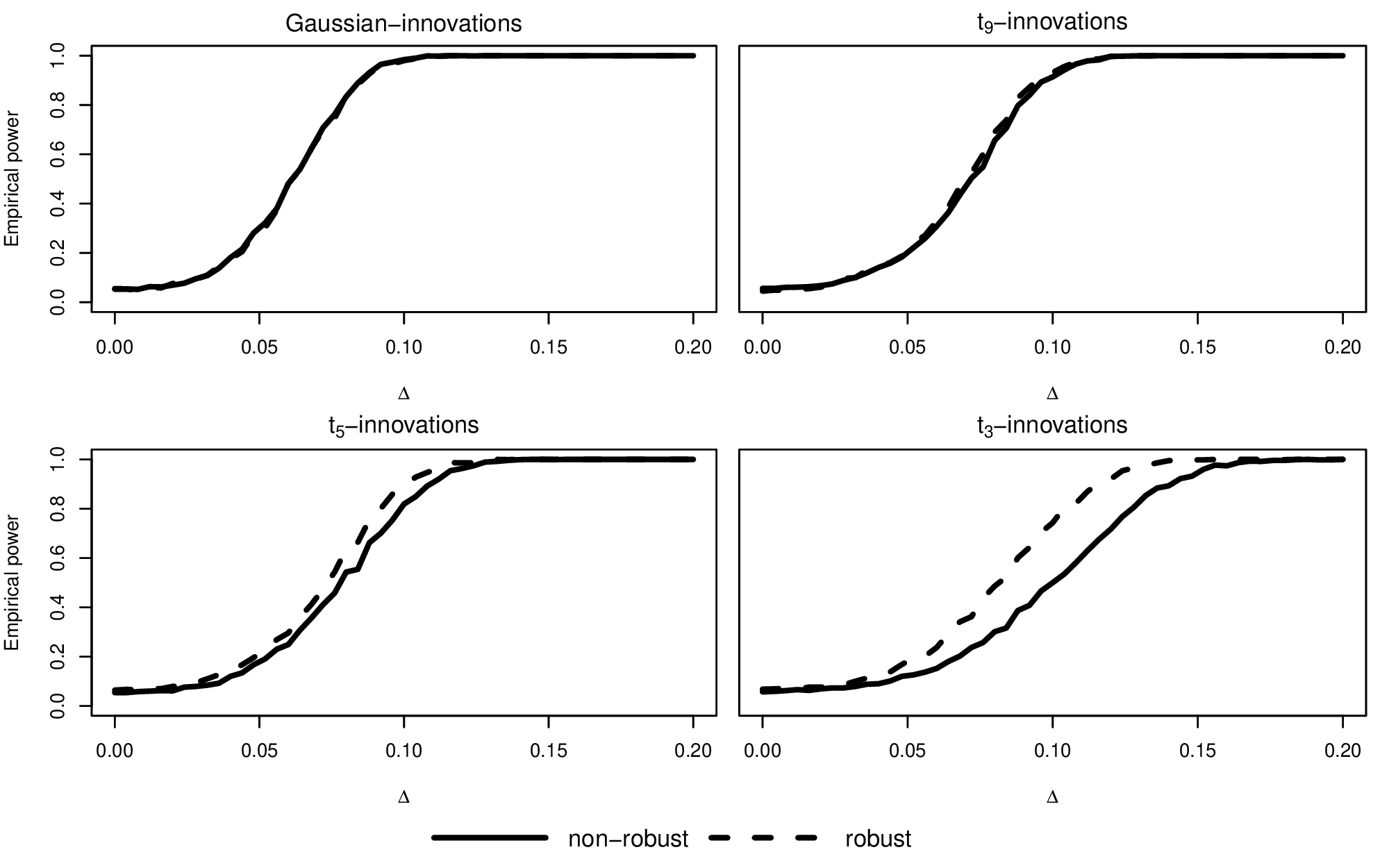}
\caption{Empirical power under $\rho=0.25,$ different distributed innovations, $N=200$, $T=400$ and a jump at $T=200$ which is generated by a normal distribution with mean 0 and standard deviation $\Delta$. \label{H11}}
\end{center}
\end{figure}

To sum up, the robust test performs comparably to the non robust one in the Gaussian case and clearly outperforms it under heavy tailed distributions.  
\section{Summary}
We have proposed a robust test for change-point detection in panel data where the number of individuals and the time horizon is large. The procedure is based on residuals which are robustly transformed via a $\Psi$-function. The null distribution is derived under very weak conditions, allowing for arbitrary heavy tailed distributions and heterogeneous serial dependence. To the best of our knowledge this is the first contribution in the robust change-point literature considering a data dependent choice of the tuning parameters $\sigma_i, i=1,\ldots,N,$ which allows to combine high robustness and efficiency.\\
A simulation study illustrates that the robust procedure outperforms the non robust one under heavy tails and moreover indicates that there is little loss under Gaussian data. This at first glance surprising observation is not unusual for robust methods applied in the high dimensional setting. For instance, it is observed that the Pitman asymptotic efficiency under Gaussianity of the spatial median approaches 1 if the dimension $N$ is large \citep{mottonen1997efficiency}. Asymptotic variances of robust scatter estimators like the Tyler shape matrix, MCD, M- and S-estimators tend to that of the empirical covariance matrix under multivariate normal distributions for large $N$ \citep[see e.g.][]{CrouxHaesbroeck1999, Taskinen2006}. Relative efficiencies of sign tests for uniformity on the unit sphere, independence against serial dependence and multivariate independence tend to 1 under normality and growing dimension compared to classical Gaussian competitors \citep[see e.g.][]{paindaveine2016high}. With regard to these results it is interesting to know whether one can prove similar results in the panel context.\\
There are also some limitations of our testing procedure. First there are no covariates (apart from an individual mean) in the considered model. The test statistic uses robustly transformed observations based on prior location- and scale-estimates. The intuitive generalisation would be to substitute regression residuals (from a robust regression) for the standardized residuals. However it is not clear under which conditions the asymptotic distribution is still the same as in the much simpler panel model considered here. Another limitation is the exclusion of cross sectional dependence. Promising in this regard looks the projection approach proposed by \cite{Aston}, which allows for an arbitrary dependence structure between the individuals. The main challenge here, also in the non-robust case, is the choice of the projection, since the power of the test crucially depends on it. Finally our test requires that the time dimension $T$ is large. It enables us to allow under some regularity conditions for arbitrary serial dependence which can even differ between the individuals. If $T$ is small one has to be more restrictive in this regard and it is also more complicated to allow for different distributions of the particular individuals.

\bibliographystyle{abbrvnat}
{\small
\bibliography{lit2}
}

\appendix
\section{Proofs}
The main component of our proofs are moment inequalities, which are based on coupling arguments. More in detail we use the following result by \cite{philipp1986invariance} which describes how dependent random variables can be substituted by independent ones.
\begin{proposition}\label{eq:philipp} (Theorem 3.4 in \cite{philipp1986invariance}): 
Let $\{B_k,m_k,k\geq 1\}$ be a sequence of Polish spaces. Let $\alpha_k$ denote the Borel field over $B_k,$ let $\{X_k,~k \geq 0\}$ be a sequence of random variables with values in $B_k$ and let $\{\gamma_k,~k\geq 0\}$ be a sequence of non-decreasing $\sigma-$fields such that $X_k$ is $\gamma_k-$measurable. Suppose that for some sequence $\{\beta_k\,~k\geq 0\}$ of non-negative numbers
\begin{align*}
\mathbb{E}\sup_{A\in \alpha_k}(|P(X_k\in A|\gamma_{k-1})-P(X_k\in A)|)\leq \beta_k
\end{align*}
for all $k\geq 1.$ Denote by $F_k$ the distribution of $X_k$ and let $\{G_k,~k\geq 0\}$ be a sequence of distributions on $(B_k,\alpha_k)$ such that
\begin{align*}
F_k(A)\leq G_k(A^{\rho_k})+\sigma_k~~\forall~A\in \alpha_k
\end{align*}
with $\rho_k,~\sigma_k\geq0$ and $A^{\epsilon}=\cup_{x\in A}\{y:~m_k(x,y)\leq \epsilon\}.$ Then without changing its distribution, one can redefine the sequence $\{X_k,~k\geq 0\}$ on a richer probability space on which there exists a sequence $\{Y_k,~k\geq 1\}$ of independent random variables with
 distribution $G_k$ such that for all $k\geq 1$:
 \begin{align*}
 P(m_k(X_k,Y_k)>\rho_k)\leq \beta_k+\sigma_k.
 \end{align*}
 \end{proposition}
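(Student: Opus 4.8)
The plan is to construct the coupling one index at a time, combining two classical single-step coupling devices, and then to patch the steps together recursively so that the $Y_k$ emerge mutually independent. Throughout, the Polish structure of each $B_k$ is what makes the measurable constructions below possible, since it guarantees regular conditional distributions and measurable ``quantile-type'' transports.

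First I would isolate a single-step lemma. Fix $k$ and abbreviate $\mathcal{G}=\gamma_{k-1}$, $F=F_k$, $G=G_k$. Since every $A\in\sigma(X_k)$ has the form $\{X_k\in C\}$ with $C\in\alpha_k$, the hypothesis $\mathbb{E}\sup_{A}|P(X_k\in A\mid\mathcal{G})-P(X_k\in A)|\leq\beta_k$ is exactly a bound on the absolute-regularity coefficient between $\sigma(X_k)$ and $\mathcal{G}$. By Berbee's maximal coupling lemma one can, after adjoining an independent uniform variable, construct $\tilde{X}_k$ of law $F$ that is independent of $\mathcal{G}$ and satisfies $P(X_k\neq\tilde{X}_k)\leq\beta_k$. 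Next, the domination hypothesis $F(A)\leq G(A^{\rho_k})+\sigma_k$ for all $A\in\alpha_k$ is precisely the condition in Strassen's coupling theorem for the pair $(F,G)$; it yields a measurable transport kernel which, fed $\tilde{X}_k$ together with further independent randomisation, produces a variable $Y_k$ of law $G$ with $P(m_k(\tilde{X}_k,Y_k)>\rho_k)\leq\sigma_k$. As this transport depends only on $\tilde{X}_k$ and on fresh randomisation, $Y_k$ inherits the independence of $\mathcal{G}$ from $\tilde{X}_k$, and a union bound gives
\begin{align*}
P(m_k(X_k,Y_k)>\rho_k)\leq P(X_k\neq\tilde{X}_k)+P(m_k(\tilde{X}_k,Y_k)>\rho_k)\leq\beta_k+\sigma_k.
\end{align*}

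To obtain a globally independent sequence $(Y_k)$, I would enlarge the underlying space by an i.i.d.\ family of uniform variables independent of $(X_k)$ and apply the single-step lemma inductively, at stage $k$ carrying out the construction using only $X_k$, $\gamma_{k-1}$ and a fresh uniform $U_k$. Each previously built $Y_1,\ldots,Y_{k-1}$ is then measurable with respect to $\sigma(\gamma_{k-1},U_1,\ldots,U_{k-1})$; since $Y_k$ is a function of the $X$-sequence and of $U_k$ alone, is independent of $\gamma_{k-1}$, and the uniforms are independent of the $X$-sequence, a short computation shows $Y_k$ is independent of the whole field $\sigma(\gamma_{k-1},U_1,\ldots,U_{k-1})$, hence of $Y_1,\ldots,Y_{k-1}$. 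Thus the $Y_k$ are mutually independent, each has law $G_k$, and the law of the original sequence $(X_k)$ is left intact by the enlargement.

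The main obstacle is the measurability bookkeeping inside the single-step lemma: both Berbee's and Strassen's couplings must be realised as jointly measurable transports of the conditioning data and the auxiliary uniform, so that the independence asserted at each stage truly holds on the enlarged space. This is exactly where the Polish hypothesis is indispensable, supplying both the regular conditional distribution $P(X_k\in\cdot\mid\mathcal{G})$ and a measurable inverse-transport; once these transports are in place the remaining probabilistic estimates are routine.
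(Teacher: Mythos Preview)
The paper does not give its own proof of this proposition; it is quoted as Theorem~3.4 of \cite{philipp1986invariance} and used as a black box in the subsequent covariance inequalities. Your sketch is essentially the standard proof of that result: Berbee's maximal coupling handles the $\beta$-mixing hypothesis, Strassen's theorem handles the Prokhorov-type domination $F_k(A)\le G_k(A^{\rho_k})+\sigma_k$, and the recursive enlargement with fresh uniforms delivers the joint independence of the $Y_k$. The one place that deserves a little more than ``a short computation'' is the independence step in the recursion: you need $\tilde X_k$ independent not just of $\gamma_{k-1}$ but of $\sigma(\gamma_{k-1},U_1,\ldots,U_{k-1},V_1,\ldots,V_{k-1})$. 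This does hold, because $(\tilde X_k,\gamma_{k-1})$ is a function of the original space together with $U_k$, hence independent of the earlier auxiliary uniforms, and combining this with $\tilde X_k\perp\gamma_{k-1}$ via a monotone-class argument gives the stronger independence. With that detail supplied, your argument is complete and matches Philipp's.
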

We use Proposition \ref{eq:philipp} to derive covariance inequalities for processes which are P-NED. Similar inequalities are already proofed for processes which are strong mixing \citep[see][]{davydov1970invariance,deo1973note} or $L_1$-NED \citep[see][]{borovkova2001limit}.
\begin{proposition}\label{eq:eqmoment}
Let $(X_t)_{t \in \mathbb{N}}$ be stationary and P-NED on an absolutely regular process $(Z_{t})_{t\in \mathbb{Z}}$ with approximation constants $(a_k)_{k\in \mathbb{N}}$, functions $(f_k)_{k\in \mathbb{N}}$, error function $\Phi$ and absolutely regularity coefficients $(\beta_k)_{k\in \mathbb{N}}.$ Furthermore there exists $a>p\geq 2$ with $\mathbb{E}(|X_1|^a)<\infty$, \linebreak $\int_0^1 x^{\frac{a}{a-p+1}-1}\Phi(x) dx < \infty$, $\int_1^\infty x^{a-1} \Phi(x) dx<\infty,$ then there are $D_1$ and $D_2$ independent of $m$ such that
\begin{align}\label{covin}
|\mathbb{E}(X_{i_1}\ldots X_{i_k}X_{i_{k+1}}\ldots X_{i_p})-\mathbb{E}(X_{i_1}\ldots X_{i_k})\mathbb{E}(X_{i_{k+1}}\ldots X_{i_p})|\leq D_1 a_{\lfloor m/3 \rfloor}^{\frac{a-p+1}{a}}+D_2\beta_{\lfloor m/3 \rfloor}^{\frac{a-p}{p}}
\end{align}
where $1\leq i_1 \leq \ldots \leq i_p\leq T$ and $m=i_{k+1}-i_k.$
\end{proposition}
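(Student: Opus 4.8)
The plan is the classical three-step coupling argument. Put $n:=\lfloor m/3\rfloor$ and approximate each observation by the finite window $\tilde X_{i_j}:=f_n(Z_{i_j-n},\dots,Z_{i_j+n})$. Then $\tilde X_{i_1},\dots,\tilde X_{i_k}$ are $\sigma(Z_t:t\le i_k+n)$-measurable and $\tilde X_{i_{k+1}},\dots,\tilde X_{i_p}$ are $\sigma(Z_t:t\ge i_{k+1}-n)$-measurable; since $i_{k+1}-i_k=m$, these two $\sigma$-fields are separated by at least $m-2n\ge n$ time points, so by monotonicity of the regularity coefficients their dependence is controlled by $\beta_n=\beta_{\lfloor m/3\rfloor}$. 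Writing $\Delta$ for the left-hand side of (\ref{covin}), I would insert the approximations, then a coupled independent copy of the second block, and estimate the three resulting errors separately.

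First, the approximation error $\mathbb{E}(X_{i_1}\cdots X_{i_p})-\mathbb{E}(\tilde X_{i_1}\cdots\tilde X_{i_p})$ (and, likewise, the corresponding errors on each factor of the product term) is written as a telescoping sum whose $j$-th summand is a product of one factor $X_{i_j}-\tilde X_{i_j}$ and $p-1$ factors lying in $L^a$; the $\tilde X$'s lie in $L^a$ uniformly in $n$ because $P(|X_0-\tilde X_0|>\epsilon)\le\min(1,a_n\Phi(\epsilon))$ and $\int_1^\infty x^{a-1}\Phi(x)\,dx<\infty$. Hölder with exponents $q:=\frac{a}{a-p+1}$ and $a,\dots,a$ then bounds this by a constant times $\big(\mathbb{E}|X_0-\tilde X_0|^{q}\big)^{1/q}$. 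The heart of the matter is the estimate $\mathbb{E}|X_0-\tilde X_0|^{q}\le C a_n$: writing $\mathbb{E}|X_0-\tilde X_0|^q=\int_0^\infty q\epsilon^{q-1}P(|X_0-\tilde X_0|>\epsilon)\,d\epsilon$ and splitting at the level $\epsilon_0$ where $a_n\Phi(\epsilon)$ crosses $1$ and at $\epsilon=1$, one uses $P(\cdot)\le a_n\Phi(\epsilon)$ together with $\int_0^1 x^{q-1}\Phi(x)\,dx<\infty$ on $(\epsilon_0,1)$, $P(\cdot)\le a_n\Phi(\epsilon)$ together with $q-1\le a-1$ and $\int_1^\infty x^{a-1}\Phi(x)\,dx<\infty$ on $(1,\infty)$, and $P(\cdot)\le1$ on $(0,\epsilon_0)$, the resulting $\epsilon_0^q$ being $\le Ca_n$ since monotonicity of $\Phi$ gives $\Phi(\epsilon_0)\epsilon_0^q/q\le\int_0^{\epsilon_0}x^{q-1}\Phi(x)\,dx<\infty$ while $\Phi(\epsilon_0)=1/a_n$. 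Hence this error is of order $a_{\lfloor m/3\rfloor}^{(a-p+1)/a}$, the first term of (\ref{covin}).

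Second, applying Proposition \ref{eq:philipp} (discrete metric, $\rho_k=\sigma_k=0$, target law equal to the true law) to the block $(Z_{i_{k+1}-n},\dots,Z_{i_p+n})$, one obtains on an enlarged space a copy $(Z'_t)$ of that block with the same joint law, independent of $\sigma(Z_t:t\le i_k+n)$, and with $P((Z_t)\ne(Z'_t))\le\beta_n$; set $\tilde X'_{i_j}:=f_n(Z'_{i_j-n},\dots,Z'_{i_j+n})$ for $j>k$. By independence and equality of laws $\mathbb{E}(\tilde X_{i_1}\cdots\tilde X_{i_k}\,\tilde X'_{i_{k+1}}\cdots\tilde X'_{i_p})=\mathbb{E}(\tilde X_{i_1}\cdots\tilde X_{i_k})\,\mathbb{E}(\tilde X_{i_{k+1}}\cdots\tilde X_{i_p})$, so it remains to bound $\mathbb{E}(\tilde X_{i_1}\cdots\tilde X_{i_p})-\mathbb{E}(\tilde X_{i_1}\cdots\tilde X_{i_k}\,\tilde X'_{i_{k+1}}\cdots\tilde X'_{i_p})$, which vanishes off the event $\{(Z_t)\ne(Z'_t)\}$; since the relevant products of the $L^a$-factors lie in $L^{a/p}$, Hölder against the indicator of that event (probability $\le\beta_n$) bounds it by a constant times a power of $\beta_{\lfloor m/3\rfloor}$. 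Collecting the three errors and absorbing constants yields (\ref{covin}).

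The step I expect to be the main obstacle is the moment bound $\mathbb{E}|X_0-\tilde X_0|^{a/(a-p+1)}\le Ca_n$: the P-NED hypothesis only controls the \emph{tail} $P(|X_0-\tilde X_0|>\epsilon)$ through the product $a_n\Phi(\epsilon)$, and since $\Phi$ may be unbounded near $0$ one cannot simply integrate term by term. The decisive point is the truncation at the level $\epsilon_0$ where $a_n\Phi(\epsilon)$ equals $1$, combined with monotonicity of $\Phi$; it is exactly here that the two integrability assumptions $\int_0^1 x^{a/(a-p+1)-1}\Phi(x)\,dx<\infty$ and $\int_1^\infty x^{a-1}\Phi(x)\,dx<\infty$ enter, the former governing the small-$\epsilon$ range and the latter the large-$\epsilon$ range. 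By comparison, the coupling and the factorisation are routine once the separation of the $Z$-blocks is pinned down and the Hölder exponents in the coupling step are tracked with care.
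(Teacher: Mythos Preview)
Your proof is correct and follows the same coupling-plus-H\"older strategy as the paper, with two organisational differences worth noting.

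First, the paper applies Philipp's coupling at the outset to \emph{both} $Z$-blocks simultaneously, obtaining independent copies $\tilde W_1,\tilde W_2$ with $P(W_r\ne\tilde W_r)\le\beta_{\lfloor m/3\rfloor}$, and defines each $\tilde X_l$ as $f_{\lfloor m/3\rfloor}$ applied to the \emph{coupled} $\tilde Z$'s. It then handles the single difference $\mathbb{E}|X_l-\tilde X_l|^{q}$ (with $q=a/(a-p+1)$) by splitting on the indicator $I_{\{W=\tilde W\}}$: on $\{W=\tilde W\}$ one has $\tilde X_l=f_{\lfloor m/3\rfloor}(Z_{l-\lfloor m/3\rfloor},\dots)$ and the P-NED bound applies, while on $\{W\ne\tilde W\}$ H\"older against $P(W\ne\tilde W)\le\beta_{\lfloor m/3\rfloor}$ supplies the $\beta$-contribution. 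Your two-stage variant---first $\tilde X$ from the original $Z$'s, then a coupled copy $\tilde X'$ of the second block only---is equally valid and arguably closer to the Borovkova--Burton--Dehling template; the paper's merged version is a little more economical in that it avoids introducing a third family $\tilde X'$.

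Second, the truncation at $\epsilon_0$ that you single out as the main obstacle is in fact unnecessary. The two integrability hypotheses already give $\int_0^\infty \epsilon^{q-1}\Phi(\epsilon)\,d\epsilon<\infty$ (the part on $(1,\infty)$ because $q-1<a-1$), so one may bound
\[
\mathbb{E}|X_0-\tilde X_0|^{q}=\int_0^\infty q\epsilon^{q-1}P(|X_0-\tilde X_0|>\epsilon)\,d\epsilon\le a_n\int_0^\infty q\epsilon^{q-1}\Phi(\epsilon)\,d\epsilon=C\,a_n
\]
directly, exactly as the paper does. The possible unboundedness of $\Phi$ near $0$ is already absorbed by the assumption $\int_0^1 x^{q-1}\Phi(x)\,dx<\infty$; there is no need to first cap the tail bound at $1$.
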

\begin{proof}[Proof of Proposition \ref{eq:eqmoment}]
The proof follows the ideas of \cite{borovkova2001limit}.
First we build independent blocks $\tilde{W}_1=\tilde{Z}_{i_1-\lfloor m/3\rfloor},\ldots, \tilde{Z}_{i_k+\lfloor m/3\rfloor}$ and $\tilde{W}_2=\tilde{Z}_{i_{k+1}-\lfloor m/3\rfloor},\ldots, \tilde{Z}_{i_{p}+\lfloor m/3\rfloor}$ where the functions $f_m$ will work. Denote the original blocks by $W_1=Z_{i_1-\lfloor m/3\rfloor},\ldots, Z_{i_k+\lfloor m/3\rfloor}$ and $W_2=Z_{i_{k+1}-\lfloor m/3\rfloor},\ldots, Z_{i_p+\lfloor m/3\rfloor}.$ We apply Proposition \ref{eq:philipp} with $X_1=W_1$ and $X_2=W_2$ as well as $\rho_k=\sigma_k=0$ for $k=1,2$. Then $B_1=\mathbb{R}^{i_k-i_1+2\lfloor m/3 \rfloor+1} $ and $B_2=\mathbb{R}^{i_{p}-i_{k+1}+2\lfloor m/3 \rfloor+1} $ are polish spaces, $\gamma_1=\sigma(\ldots,Z_{i_k+\lfloor m/3 \rfloor}),$ $\gamma_2=\sigma(\ldots,Z_{i_p+\lfloor m/3 \rfloor})$ and 
\begin{align*}
\mathbb{E}\sup_{A\in \alpha_r}(|P(W_r\in A|\gamma_{r-1})-P(W_r\in A)|)\leq \beta_{m-2\lfloor m/3\rfloor}\leq \beta_{\lfloor m/3 \rfloor} ~r=1,2
\end{align*}
since both blocks are separated by $m-2\lfloor m/3\rfloor$. Proposition \ref{eq:philipp} then guarantees the existence of independent blocks $\tilde{W}_1$ and $\tilde{W}_2$ which are distributed as $W_1$ and $W_2$ such that:
\begin{align*} P(\tilde{W}_i\neq W_i)\leq \beta_{\lfloor m/3 \rfloor}~\mbox{for}~i=1,2
\end{align*}
which entails that the corresponding functionals $f_{\lfloor m/3 \rfloor}(\tilde{Z}_{l-\lfloor m/3\rfloor},\ldots,\tilde{Z}_{l+\lfloor m/3 \rfloor})=:\tilde{X}_l$ and \linebreak $f_{\lfloor m/3 \rfloor}(\tilde{Z}_{n-\lfloor m/3\rfloor},\ldots,\tilde{Z}_{n+\lfloor m/3 \rfloor})=:\tilde{X}_
n$ are also independent as long as $l<=i_k$ and $n\geq i_{k+1}$. In the following we need a bound for the error between $\tilde{X}_l$ and $X_l$ for $l\in {i_1,\ldots,i_p}.$ For $i_1\leq l\leq i_k$ one gets the following decomposition of the error
\begin{align}\label{eq:eindif}
\mathbb{E}(|X_{l}-\tilde{X}_{l}|^{\frac{a}{a-p+1}})&=\mathbb{E}(|(X_{l}-\tilde{X}_{l})I_{W_1=\tilde{W}_1}|^{\frac{a}{a-p+1}})+\mathbb{E}(|(X_{l}-\tilde{X}_{l})I_{W_1\neq \tilde{W}_1}|^{\frac{a}{a-p+1}}).
\end{align}
For the first term one can use the P-NED property iii)
\begin{align*}
\mathbb{E}(|(X_{l}-\tilde{X}_{l})I_{W_1=\tilde{W}_1}|^{\frac{a}{a-p+1}} )\leq a_{\lfloor m/3 \rfloor}\int_0^\infty \epsilon^{\frac{a}{a-p+1}-1} \Phi(\epsilon) d \epsilon \leq a_{\lfloor m/3 \rfloor} \cdot C_1
\end{align*}
where $C_1$ only depends on $\Phi.$ Using the $c_r$ inequality \citep*[see][p. 157]{Loeve1977} and H\"{o}lder inequality we get
\begin{align*}
\mathbb{E}&(|(X_{l}-\tilde{X}_{l})I_{W_1\neq \tilde{W}_1}|^{\frac{a}{a-p+1}})\\
&\leq 2^{\frac{a}{a-p+1}}\left(\mathbb{E}(|X_{l}I_{W_1\neq \tilde{W}_1}|^{\frac{a}{a-p+1}})+\mathbb{E}(|\tilde{X}_{l}I_{W_1\neq \tilde{W}_1}|^{\frac{a}{a-p+1}})\right)\nonumber\\
&\leq 2^{\frac{a}{a-p+1}}\left([\mathbb{E}(|X_l|^a)]^{\frac{1}{a-p+1}}P(W_1\neq \tilde{W}_1)^{\frac{a-p}{a-p+1}}+[\mathbb{E}(|\tilde{X}_l|^a)]^{\frac{1}{a-p+1}}P(W_1\neq \tilde{W}_1)^{\frac{a-p}{a-p+1}}\right).\nonumber\\
& \leq 2^{\frac{a}{a-p+1}} \beta_{\lfloor m/3 \rfloor}^{\frac{a-p}{a-p+1}}\left([\mathbb{E}(|X_l|^a)]^{\frac{1}{a-p+1}}+[\mathbb{E}(|\tilde{X}_l|^a)]^{\frac{1}{a-p+1}}\right)\nonumber
\end{align*} 
and since $\tilde{W}_1$ is distributed as $W_1$ we have
\begin{align}\label{eq:helf2}
\mathbb{E}(|\tilde{X}_l|^a)&=\mathbb{E}(|f_{\lfloor m/3 \rfloor}(Z_{l-\lfloor m/3 \rfloor},\ldots,Z_{l-\lfloor m/3 \rfloor})|^a)\nonumber \\
&\leq 2^{a-1} (\mathbb{E}(|f(Z_{l-\lfloor m/3 \rfloor},\ldots,Z_{l-\lfloor m/3 \rfloor})-X_l|^a)+\mathbb{E}(|X_l|^a)\nonumber\\
&\leq 2^{a-1} (a_{\lfloor m/3 \rfloor} C_1+\mathbb{E}(|X_l|^a)).
\end{align}
Therefore (\ref{eq:eindif}) is bounded by
\begin{align}\label{eq:helf}
\mathbb{E}(|X_{l}-\tilde{X}_{l}|^{\frac{a}{a-p+1}}) \leq C_2(a_{\lfloor m/3 \rfloor}+\beta_{\lfloor m/3 \rfloor}^{\frac{a-p}{a-p+1}})
\end{align}
which also holds for $i_{k+1}\leq l \leq i_p.$
To derive covariance inequalities, we need a bound for the error between products of random variables and its copies
\begin{align}\label{eq:sum1}
\mathbb{E}&(|X_{i_1} \ldots X_{i_p}-\tilde{X}_{i_1} \ldots \tilde{X}_{i_p}|)\\
&\leq\mathbb{E}(|(X_{i_1}-\tilde{X}_{i_1})(X_{i_2}\ldots X_{i_p})|)+\sum_{j=2}^{p-1} \mathbb{E}(| \tilde{X}_{i_1}\ldots \tilde{X}_{i_{j-1}})(X_{i_j}-\tilde{X}_{i_j})(X_{i_{j+1}}X_{i_p}|)\nonumber \\
&+\mathbb{E}(|(\tilde{X}_{i_1}\ldots \tilde{X}_{i_{p-1}})(X_{i_p}-\tilde{X}_{i_p})|).\nonumber
\end{align}
Using the generalized H\"older inequality and (\ref{eq:helf}) one can bound the first summand on the right hand-side of (\ref{eq:sum1}) by
\begin{align*}
\mathbb{E}&(|(X_{i_1}-\tilde{X}_{i_1})(X_{i_2}\ldots X_{i_p})|)\\
&\leq \left[\mathbb{E}\left(|(X_{i_1}-\tilde{X}_{i_1})|^{\frac{a}{a-p+1}}\right)\right]^{\frac{a-p+1}{a}}\left[\mathbb{E}\left(|X_{i_2}|^a\right)\right]^\frac{1}{a}\cdots \left[\mathbb{E}\left(|X_{i_p}|^a\right)\right]^\frac{1}{a}\\
&\leq C_4(a_{\lfloor m/3 \rfloor}+\beta_{\lfloor m/3 \rfloor}^{\frac{a-p}{a-p+1}})^\frac{a-p+1}{a}.
\end{align*}
Similar bounds for the other summands in (\ref{eq:sum1}) can be derived using (\ref{eq:helf}) and (\ref{eq:helf2}) which results eventually in
\begin{align}\label{eq:bound1}
\mathbb{E}&(|X_{i_1} \ldots X_{i_p}-\tilde{X}_{i_1} \ldots \tilde{X}_{i_p}|) \leq C_5 a_{\lfloor m/3 \rfloor}^\frac{a-p+1}{a}+C_6\beta_{\lfloor m/3 \rfloor}^\frac{a-p}{a}.
\end{align}
Analogously one gets
\begin{align}\label{eq:bound2}
\mathbb{E}&(|X_{i_1}\ldots X_{i_k}-\tilde{X}_{i_1}\ldots \tilde{X}_{i_k}|)\leq C_7 a_{\lfloor m/3 \rfloor}^\frac{a-k+1}{a}+C_8\beta_{\lfloor m/3 \rfloor}^\frac{a-k}{a}
\end{align}
and
\begin{align}\label{eq:bound3}
\mathbb{E}&(|X_{i_1}\ldots X_{i_k}-\tilde{X}_{i_1}\ldots \tilde{X}_{i_k}|)\leq C_9 a_{\lfloor m/3 \rfloor}^\frac{a-p+k+1}{a}+C_{10}\beta_{\lfloor m/3 \rfloor}^\frac{a-p+k}{a}
\end{align}
Finally we prove the covariance inequality \ref{covin}. Denote
$A=X_{i_1}\ldots X_{i_j}$, $B=X_{i_{j+1}} \ldots X_{i_p}$, $\tilde{A}=\tilde{X}_{i_1}\ldots \tilde{X}_{i_k}$ and $\tilde{B}=\tilde{X}_{i_{k+1}}\ldots \tilde{X}_{i_p}$ then we get by (\ref{eq:bound1}), (\ref{eq:bound2}) and (\ref{eq:bound3})
\begin{align*}
|\mathbb{E}(AB)-\mathbb{E}(A)\mathbb{E}(B)|&=|\mathbb{E}(AB)-\mathbb{E}(A-\tilde{A}+\tilde{A})\mathbb{E}(B-\tilde{B}+\tilde{B})|\\
&\leq \mathbb{E}(|AB-\tilde{A}\tilde{B}|)+\mathbb{E}(|\tilde{A}|)\mathbb{E}(|B-\tilde{B}|)\\
&+\mathbb{E}(|\tilde{B}|)\mathbb{E}(|A-\tilde{A}|)+\mathbb{E}(|A-\tilde{A}|)\mathbb{E}(|B-\tilde{B}|)\\
&\leq C_{11} a_{\lfloor m/3 \rfloor}^\frac{a-p+1}{a}+C_{12}\beta_{\lfloor m/3 \rfloor}^\frac{a-p}{a}
\end{align*}
which completes the proof.\end{proof}
The result is a little sharper if the process is bounded.
\begin{proposition}\label{eq:eqmomentbounded}
Let $(X_t)_{t \in \mathbb{N}}$ be stationary and P-NED on an absolutely regular process $(Z_{t})_{t\in \mathbb{Z}}$ with approximation constants $(a_k)_{k\in \mathbb{N}}$, functions $(f_k)_{k\in \mathbb{N}}$, error function $\Phi$ and absolutely regularity coefficients $(\beta_k)_{k\in \mathbb{N}}.$ Furthermore there exists $D$ such that $|X_1|\leq D$ a.s. and $\int_0^\infty \Phi(x)dx<\infty$, then there exists $D_1$ independent of $m$ such that
\begin{align*}
|\mathbb{E}(X_{i_1}\ldots X_{i_k}X_{i_{k+1}}\ldots X_{i_p})-\mathbb{E}(X_{i_1}\ldots X_{i_k})\mathbb{E}(X_{i_{k+1}}\ldots X_{i_p})|\leq D_1 (a_{\lfloor m/3 \rfloor}+\beta_{\lfloor m/3 \rfloor}),
\end{align*}
where $1\leq i_1 \leq \ldots \leq i_p\leq T$ and $m=i_{k+1}-i_k.$
\end{proposition}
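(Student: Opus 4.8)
The plan is to reuse the blocking-and-coupling argument from the proof of Proposition \ref{eq:eqmoment}, replacing every application of Hölder's inequality by a crude $L^\infty$/$L^1$ estimate made possible by $|X_1|\le D$; this is exactly what upgrades the fractional exponents $\frac{a-p+1}{a}$ and $\frac{a-p}{a}$ to $1$. The one preliminary observation I would make is that we may assume $|f_k|\le D$ for all $k$: since $|X_0|\le D$ a.s., replacing $f_k$ by its clamp to $[-D,D]$ only decreases the approximation error $|X_0-f_k(Z_{-k},\ldots,Z_k)|$ pointwise, so $(X_t)$ stays P-NED on $(Z_t)$ with the same $a_k$ and $\Phi$. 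Consequently the coupled copies $\tilde{X}_l$ constructed below are themselves bounded by $D$, which is the crucial point.

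First I would split the indices into the two blocks $W_1=(Z_{i_1-\lfloor m/3\rfloor},\ldots,Z_{i_k+\lfloor m/3\rfloor})$ and $W_2=(Z_{i_{k+1}-\lfloor m/3\rfloor},\ldots,Z_{i_p+\lfloor m/3\rfloor})$, which are separated by $m-2\lfloor m/3\rfloor\ge\lfloor m/3\rfloor$ indices, and invoke Proposition \ref{eq:philipp} with $\rho_r=\sigma_r=0$ to obtain independent $\tilde{W}_1,\tilde{W}_2$ with the same marginal laws as $W_1,W_2$ and $P(\tilde{W}_r\ne W_r)\le\beta_{\lfloor m/3\rfloor}$. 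Setting $\tilde{X}_l=f_{\lfloor m/3\rfloor}(\tilde{Z}_{l-\lfloor m/3\rfloor},\ldots,\tilde{Z}_{l+\lfloor m/3\rfloor})$, the window of $\tilde{X}_l$ sits inside $W_1$ when $l\le i_k$ and inside $W_2$ when $l\ge i_{k+1}$, so $\tilde{X}_{i_1},\ldots,\tilde{X}_{i_k}$ and $\tilde{X}_{i_{k+1}},\ldots,\tilde{X}_{i_p}$ are independent.

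The heart of the argument is the single-factor estimate $\mathbb{E}|X_l-\tilde{X}_l|\le D_0(a_{\lfloor m/3\rfloor}+\beta_{\lfloor m/3\rfloor})$, which I would prove by splitting over $\{W_r=\tilde{W}_r\}$ and its complement (with $r$ the block containing $l$): on the former event $|X_l-\tilde{X}_l|$ equals the genuine P-NED error $|X_l-f_{\lfloor m/3\rfloor}(Z_{l-\lfloor m/3\rfloor},\ldots,Z_{l+\lfloor m/3\rfloor})|$, whose expectation is at most $a_{\lfloor m/3\rfloor}\int_0^\infty\Phi(x)\,dx<\infty$ by $P(|X_l-f_{\lfloor m/3\rfloor}|>\epsilon)\le a_{\lfloor m/3\rfloor}\Phi(\epsilon)$ and Fubini; on the complement both $|X_l|$ and $|\tilde{X}_l|$ are $\le D$, contributing $\le 2D\,P(W_r\ne\tilde{W}_r)\le 2D\beta_{\lfloor m/3\rfloor}$. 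Telescoping a product of such factors into single differences and bounding the untouched factors by $D$ then gives, for any $S\subseteq\{1,\ldots,p\}$,
\begin{align*}
\mathbb{E}\Bigl|\prod_{j\in S}X_{i_j}-\prod_{j\in S}\tilde{X}_{i_j}\Bigr|\le |S|\,D^{|S|-1}D_0\,\bigl(a_{\lfloor m/3\rfloor}+\beta_{\lfloor m/3\rfloor}\bigr),
\end{align*}
in particular for $S=\{1,\ldots,p\}$, $S=\{1,\ldots,k\}$ and $S=\{k+1,\ldots,p\}$. Writing $A=\prod_{j\le k}X_{i_j}$, $B=\prod_{j>k}X_{i_j}$ and $\tilde{A},\tilde{B}$ for the coupled products, independence of $\tilde{A}$ and $\tilde{B}$ yields $\mathbb{E}(\tilde{A}\tilde{B})=\mathbb{E}(\tilde{A})\mathbb{E}(\tilde{B})$, hence
\begin{align*}
|\mathbb{E}(AB)-\mathbb{E}(A)\mathbb{E}(B)|\le \mathbb{E}|AB-\tilde{A}\tilde{B}|+D^{p-k}\,\mathbb{E}|A-\tilde{A}|+D^{k}\,\mathbb{E}|B-\tilde{B}|,
\end{align*}
and each term on the right is $O(a_{\lfloor m/3\rfloor}+\beta_{\lfloor m/3\rfloor})$ by the previous display, which gives the claim for a suitable $D_1$.

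The only genuinely delicate step is the reduction to bounded approximating functions $f_k$, equivalently the boundedness of the copies $\tilde{X}_l$. Without it one is forced to control $\mathbb{E}(|\tilde{X}_l|\,I_{W_r\ne\tilde{W}_r})$ by a Hölder interpolation against $P(W_r\ne\tilde{W}_r)$, which requires the moment hypotheses of Proposition \ref{eq:eqmoment} and reintroduces the fractional exponents; once $|\tilde{X}_l|\le D$ is secured, the rest is a routine $L^\infty$ simplification of the calculation already carried out there.
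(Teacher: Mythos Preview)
Your proposal is correct and follows essentially the same route as the paper: the paper simply states that the proof is ``analogous to that of Proposition \ref{eq:eqmoment}, but instead of using the H\"older inequality one uses the boundedness of $X_i$, which enables us to extract the largest possible absolute value $D$ out of the expectation.'' Your explicit observation that one may first clamp the approximating functions $f_k$ to $[-D,D]$ so that the coupled copies $\tilde{X}_l$ are themselves bounded is a genuine clarification the paper leaves implicit; without it the bound $\mathbb{E}(|\tilde{X}_l|\,I_{W_r\neq\tilde{W}_r})\le D\,\beta_{\lfloor m/3\rfloor}$ is not immediate.
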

The proof is analogous to that of Proposition \ref{eq:eqmoment}. But instead of using the H\"older inequality one uses the boundedness of $X_i$, which enables us to extract the largest possible absolute value $D$ out of the expectation.\\
The next theorem covers the case where one looks at transformations of the P-NED process. It is necessary for the consistency of the long run variance estimation under estimated tuning parameters.
\begin{proposition}\label{eq:eqmomentboundeddifferent}
Let $(X_t)_{t \in \mathbb{N}}$ be stationary and P-NED on an absolutely regular process $(Z_{t})_{t\in \mathbb{Z}}$ with approximation constants $(a_k)_{k\in \mathbb{N}}$, functions $(f_k)_{k\in \mathbb{N}}$, absolutely regularity coefficients $(\beta_k)_{k\in \mathbb{N}}$ and error function $\Phi$ with $\int_0^\infty \Phi(x)dx<\infty$. Furthermore let $g_1,\ldots,g_p:\mathbb{R}\rightarrow \mathbb{R}$ be bounded and Lipschitz continuous functions with universal Lipschitz constant $c$, then there exists $D_1$ independent of $m$ such that
\begin{align*}
|\mathbb{E}[g_1(X_{i_1})& \ldots g_k(X_{i_k})g_{k+1}(X_{i_{k+1}})\ldots g_p(X_{i_p})]\\
&-\mathbb{E}[g_1(X_{i_1})\ldots g_k(X_{i_k})]\mathbb{E}[g_{k+1}(X_{i_{k+1}})\ldots g_p(X_{i_p})]|\leq D_1 (a_{\lfloor m/3 \rfloor}+\beta_{\lfloor m/3 \rfloor}),
\end{align*}
where $1\leq i_1 \leq \ldots \leq i_p\leq T$ and $m=i_{k+1}-i_k.$
\end{proposition}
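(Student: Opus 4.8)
The plan is to repeat the block-coupling argument used in the proof of Proposition \ref{eq:eqmoment} (and its bounded version Proposition \ref{eq:eqmomentbounded}); it cannot simply be quoted, because here distinct functions $g_j$ are evaluated at distinct time points. First I would observe that the P-NED structure is inherited: since each $g_j$ is Lipschitz with constant $c$,
\begin{align*}
P\bigl(|g_j(X_0)-(g_j\circ f_k)(Z_{-k},\ldots,Z_k)|>\epsilon\bigr)\leq P\bigl(|X_0-f_k(Z_{-k},\ldots,Z_k)|>\epsilon/c\bigr)\leq a_k\,\Phi(\epsilon/c),
\end{align*}
so $(g_j(X_t))_{t\in\mathbb{N}}$ is P-NED on the \emph{same} underlying process $(Z_t)_{t\in\mathbb{Z}}$, with functionals $g_j\circ f_k$, the same approximating constants $(a_k)_k$, and error function $\epsilon\mapsto\Phi(\epsilon/c)$, whose integral $\int_0^\infty\Phi(x/c)\,dx=c\int_0^\infty\Phi(x)\,dx$ is still finite. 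Set $G=\max_{1\le j\le p}\|g_j\|_\infty<\infty$.

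Then I would copy the construction from the proof of Proposition \ref{eq:eqmoment}: form the two $Z$-blocks $W_1=Z_{i_1-\lfloor m/3\rfloor},\ldots,Z_{i_k+\lfloor m/3\rfloor}$ and $W_2=Z_{i_{k+1}-\lfloor m/3\rfloor},\ldots,Z_{i_p+\lfloor m/3\rfloor}$, which are separated by at least $\lfloor m/3\rfloor$, and apply Proposition \ref{eq:philipp} with $\rho_k=\sigma_k=0$ to obtain independent copies $\tilde W_1,\tilde W_2$ of $W_1,W_2$ with $P(\tilde W_r\neq W_r)\leq\beta_{\lfloor m/3\rfloor}$. With $\hat X_l^{(j)}:=(g_j\circ f_{\lfloor m/3\rfloor})(\tilde Z_{l-\lfloor m/3\rfloor},\ldots,\tilde Z_{l+\lfloor m/3\rfloor})$, the collections $\{\hat X_l^{(j)}:\,l\le i_k\}$ and $\{\hat X_l^{(j)}:\,l\ge i_{k+1}\}$ are independent, and on $\{W_1=\tilde W_1\}$ one has $\hat X_l^{(j)}=(g_j\circ f_{\lfloor m/3\rfloor})(Z_{l-\lfloor m/3\rfloor},\ldots,Z_{l+\lfloor m/3\rfloor})$ for $i_1\le l\le i_k$. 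Splitting over this event and using $|g_j(X_l)-\hat X_l^{(j)}|\le 2G$ on its complement gives, uniformly in $l\in\{i_1,\ldots,i_p\}$,
\begin{align*}
\mathbb{E}\bigl|g_j(X_l)-\hat X_l^{(j)}\bigr|\ \le\ \int_0^\infty a_{\lfloor m/3\rfloor}\Phi(\epsilon/c)\,d\epsilon\ +\ 2G\,\beta_{\lfloor m/3\rfloor}\ \le\ C_1\bigl(a_{\lfloor m/3\rfloor}+\beta_{\lfloor m/3\rfloor}\bigr),
\end{align*}
and the analogous bound for $l$ in the second block.

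Next I would run the telescoping expansion of $\mathbb{E}\bigl|g_1(X_{i_1})\cdots g_p(X_{i_p})-\hat X_{i_1}^{(1)}\cdots\hat X_{i_p}^{(p)}\bigr|$ exactly as in (\ref{eq:sum1}); but since all factors $g_l(X_{i_l})$ and $\hat X_{i_l}^{(l)}$ are bounded by $G$, I pull out the bound $G^{p-1}$ in front of each term instead of invoking H\"older, obtaining
\begin{align*}
\mathbb{E}\bigl|g_1(X_{i_1})\cdots g_p(X_{i_p})-\hat X_{i_1}^{(1)}\cdots\hat X_{i_p}^{(p)}\bigr|\ \le\ C_2\bigl(a_{\lfloor m/3\rfloor}+\beta_{\lfloor m/3\rfloor}\bigr),
\end{align*}
and likewise for the two sub-products $A:=g_1(X_{i_1})\cdots g_k(X_{i_k})$ and $B:=g_{k+1}(X_{i_{k+1}})\cdots g_p(X_{i_p})$ against their coupled versions $\tilde A,\tilde B$. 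Since $\tilde A$ is a function of $\tilde W_1$ and $\tilde B$ a function of $\tilde W_2$, independence yields $\mathbb{E}(\tilde A\tilde B)=\mathbb{E}(\tilde A)\mathbb{E}(\tilde B)$, and the asserted covariance bound then drops out of
\begin{align*}
|\mathbb{E}(AB)-\mathbb{E}(A)\mathbb{E}(B)|\ \le\ \mathbb{E}|AB-\tilde A\tilde B|+\mathbb{E}|\tilde A|\,\mathbb{E}|B-\tilde B|+\mathbb{E}|\tilde B|\,\mathbb{E}|A-\tilde A|+\mathbb{E}|A-\tilde A|\,\mathbb{E}|B-\tilde B|
\end{align*}
together with $\mathbb{E}|\tilde A|\le G^{k}$ and $\mathbb{E}|\tilde B|\le G^{p-k}$. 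The only genuinely delicate point — the main obstacle — is to resist quoting Proposition \ref{eq:eqmomentbounded} verbatim: because the $g_j$ differ across time points, one must re-examine the coupling and note that it is carried out at the level of the underlying $Z$-blocks, so that one pair of independent copies simultaneously serves all of $g_1,\ldots,g_p$, and one must check that boundedness of the $g_j$ (rather than of $X_t$) together with the inherited, still-integrable error function $\Phi(\cdot/c)$ suffices to push the argument through. Everything else is the same routine bookkeeping.
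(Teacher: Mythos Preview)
Your proposal is correct and follows exactly the route the paper indicates: the paper simply states that the proof is ``completely analogous to that of Proposition~\ref{eq:eqmomentbounded}'' (which in turn replaces the H\"older step of Proposition~\ref{eq:eqmoment} by extracting the sup-norm bound), and you have faithfully spelled out those details --- in particular the crucial observation that the coupling is performed on the $Z$-blocks and therefore simultaneously serves all of $g_1,\ldots,g_p$.
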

The proof is completely analogous to that of Proposition \ref{eq:eqmomentbounded}.\\
The next proposition is a Marcinkiewicz-Zygmund type inequality which bounds the $p-$th moment of the sum of the process. 
\begin{proposition}\label{eq:Bernstein}
Let $(X_t)_{t \in \mathbb{N}}$ be stationary and P-NED on an absolutely regular process $(Z_{t})_{t\in \mathbb{Z}}$ with approximation constants $(a_k)_{k\in \mathbb{N}}$, functions $(f_k)_{k\in \mathbb{N}}$, error function $\Phi$ and regularity coefficients $(\beta_k)_{k\in \mathbb{N}}.$ Furthermore there exists $a>p\in  \mathbb{N}$ such that $\mathbb{E}(|X_1|^a)<\infty$, $\int_0^1 x^{\frac{a}{a-p+1}-1}\Phi(x)dx<\infty$, $\int_1^\infty x^{a-1} \Phi(x) dx<\infty$ and $\sum_{i=1}^\infty a_i^{\frac{a-p+1}{a}} i^{p-1}$ as well as $\sum_{i=1}^\infty \beta_i^{\frac{a-p}{a}} i^{p-1}<\infty$, then there exist $G_1$ and $G_2$ such that
\begin{align}\label{eq:indup}
|\mathbb{E}(\sum_{i=1}^T X_i)^p|\leq TG_1|\mathbb{E}(X_1)|^p+G_2T^{\lfloor p/2 \rfloor},~ \forall T\in \mathbb{N}.
\end{align}
\end{proposition}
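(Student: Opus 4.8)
The plan is to prove the inequality by induction on $p$. The base cases $p=1$ and $p=2$ are immediate: $\mathbb{E}(\sum_{i=1}^T X_i)=T\,\mathbb{E}(X_1)$, and for $p=2$ one writes $\mathbb{E}\bigl((\sum_i X_i)^2\bigr)=T^2\,\mathbb{E}(X_1)^2+\sum_{i,j=1}^T\mathrm{Cov}(X_i,X_j)$, the covariance sum being bounded by $T\sum_{h\in\mathbb{Z}}|\mathrm{Cov}(X_1,X_{1+h})|$; this series converges because Proposition \ref{eq:eqmoment} applied with ``$p$''$=2$ gives $|\mathrm{Cov}(X_1,X_{1+h})|\le D_1 a_{\lfloor h/3\rfloor}^{(a-1)/a}+D_2\beta_{\lfloor h/3\rfloor}^{(a-2)/a}$ and the summability hypotheses with $p=2$ make it summable (in the bounded case one uses Proposition \ref{eq:eqmomentbounded} instead, with the cleaner rate $a_{\lfloor h/3\rfloor}+\beta_{\lfloor h/3\rfloor}$).

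For the inductive step I would first isolate the mean. With $\bar X_i:=X_i-\mathbb{E}(X_1)$ --- a stationary process that is again P-NED on $(Z_t)$ with the same approximating and regularity coefficients and the same error function, and that has finite $a$-th moment --- the binomial expansion of $(\sum_i X_i)^p=(\sum_i\bar X_i+T\,\mathbb{E}(X_1))^p$ produces: the $j=0$ term, which yields the contribution involving $\mathbb{E}(X_1)$; the $j=p$ term, which is the centered $p$-th moment; and, for $0<j<p$, terms $\binom{p}{j}(T\,\mathbb{E}(X_1))^{p-j}\,\mathbb{E}((\sum_i\bar X_i)^j)$ that the induction hypothesis for the centered process bounds by a constant times $|\mathbb{E}(X_1)|^{p-j}T^{p-j+\lfloor j/2\rfloor}$; in the two regimes $|\mathbb{E}(X_1)|\,T\ge 1$ and $|\mathbb{E}(X_1)|\,T<1$ this is dominated by one of the two terms of the claimed bound, hence absorbed (a routine check). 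It therefore suffices to prove $|\mathbb{E}((\sum_{i=1}^T\bar X_i)^p)|\le G_2\,T^{\lfloor p/2\rfloor}$ for a centered process.

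For this I would expand $\mathbb{E}((\sum_i\bar X_i)^p)$ into a sum over ordered tuples $i_1\le\cdots\le i_p$ in $\{1,\ldots,T\}$ (with combinatorial multiplicities $\le p!$) and decouple each product at its largest consecutive gap $m=i_{k+1}-i_k$ by Proposition \ref{eq:eqmoment}: $\mathbb{E}(\bar X_{i_1}\cdots\bar X_{i_p})=\mathbb{E}(\bar X_{i_1}\cdots\bar X_{i_k})\,\mathbb{E}(\bar X_{i_{k+1}}\cdots\bar X_{i_p})+R$ with $|R|\le D_1 a_{\lfloor m/3\rfloor}^{(a-p+1)/a}+D_2\beta_{\lfloor m/3\rfloor}^{(a-p)/a}$. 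Since at most $(p-1)\,T\,(m+1)^{p-2}$ ordered tuples have largest gap $m$, the remainders sum to at most $C\,T\sum_{m\ge1}(m+1)^{p-2}\bigl(a_{\lfloor m/3\rfloor}^{(a-p+1)/a}+\beta_{\lfloor m/3\rfloor}^{(a-p)/a}\bigr)=O(T)=O(T^{\lfloor p/2\rfloor})$, which is exactly what the hypotheses $\sum_k k^{p-1}a_k^{(a-p+1)/a}<\infty$ and $\sum_k k^{p-1}\beta_k^{(a-p)/a}<\infty$ guarantee. For the factored main term one iterates the decoupling inside each of the two blocks: because a fully separated singleton factor has expectation $\mathbb{E}(\bar X_i)=0$, only ``clusters'' of size $\ge2$ survive, so the surviving configurations are indexed by partitions of $\{1,\ldots,p\}$ into at most $\lfloor p/2\rfloor$ clusters; summing each cluster over its location contributes one factor $O(T)$ (again by the summability hypotheses), while the blocks having order $<p$ lets the induction hypothesis absorb the remaining sums without destroying the cancellation. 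Collecting the estimates gives $\mathbb{E}((\sum_i\bar X_i)^p)=O(T^{\lfloor p/2\rfloor})$, closing the induction.

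The step I expect to be the main obstacle is this last one: to land at $T^{\lfloor p/2\rfloor}$ rather than the cruder $T^{p-1}$ one cannot bound the factored sum by a product of sums of absolute values, but must keep re-applying the induction hypothesis to the longer sub-blocks so that each separated cluster genuinely saves a power of $T$. The bookkeeping --- tracking which gaps have been spent on decoupling, checking that the integrability and summability exponents appearing in Proposition \ref{eq:eqmoment} remain admissible for every block size $\le p$, and verifying that the diagonal corrections incurred in passing between ordered tuples and full $p$-fold sums are of strictly lower order --- is where the care lies; the remaining estimates are routine applications of the generalized H\"older and $c_r$-inequalities exactly as in the proof of Proposition \ref{eq:eqmoment}.
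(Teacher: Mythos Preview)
Your approach is correct in outline but differs from the paper's in two meaningful ways.

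First, the paper does \emph{not} center. It runs the induction directly on the stronger quantity
\[
\Sigma_p(T)\;:=\;\sum_{i_1,\ldots,i_p=1}^{T}\bigl|\mathbb{E}(X_{i_1}\cdots X_{i_p})\bigr|
\]
and carries the mean through the two-term bound $G_1(T|\mathbb{E}(X_1)|)^p+G_2T^{\lfloor p/2\rfloor}$. After ordering the indices, locating the largest gap $j_{s+1}$, and applying Proposition~\ref{eq:eqmoment} once, the paper simply relaxes the ``largest gap at $s$'' constraint and bounds the factored main term by
\[
\Bigl(\sum_{i_1,\ldots,i_{s-1}}|\mathbb{E}(X_{i_1}\cdots X_{i_{s-1}})|\Bigr)\Bigl(\sum_{i_s,\ldots,i_{p+1}}|\mathbb{E}(X_{i_s}\cdots X_{i_{p+1}})|\Bigr),
\]
to which the induction hypothesis applies verbatim. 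The cross terms in the resulting product are then absorbed into the two-term bound. This is the whole argument: one decoupling, one factorization, induction on each factor. Your centering step and the subsequent binomial bookkeeping are thereby avoided.

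Second --- and this is the point you should be alert to --- the paper's induction hypothesis is on $\Sigma_k(T)$, i.e.\ on the \emph{sum of absolute values}, not on $|\mathbb{E}((\sum_i X_i)^k)|$. That strengthening is exactly what makes the ``relax the constraint, bound by a product of sums'' step work. In your sketch you state the inductive claim for the centered process as a bound on $|\mathbb{E}((\sum_i\bar X_i)^j)|$, but after decoupling and taking absolute values you need control of $\sum|\mathbb{E}(\bar X_{i_1}\cdots\bar X_{i_k})|$ to feed back into the induction. Your cluster-expansion description (``iterate the decoupling, singletons vanish, at most $\lfloor p/2\rfloor$ clusters'') is a correct alternative that sidesteps this, but then you should drop the appeal to ``the induction hypothesis absorb[ing] the remaining sums'' and commit to the full combinatorics: each cluster of size $\ge2$ contributes $O(T)$ after summing its internal gaps against the covariance bound, and the iterated remainders stay $O(T)$ because at each level the number of configurations with a given maximal gap $m$ is $O(T\,m^{p-2})$. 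Either route closes, but mixing them as you do leaves the key step underspecified. The paper's choice --- strengthen the hypothesis to $\Sigma_k(T)$ and keep the mean in --- is the cleaner packaging.
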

\begin{proof}[Proof of Proposition \ref{eq:Bernstein}]
First notice that
\begin{align}\label{eq:trivial}
|\mathbb{E}(\sum_{i=1}^T X_i)^p|\leq \sum_{i_1,\ldots,i_p=1}^T|\mathbb{E}X_{i_1}\ldots X_{i_p}|.
\end{align}
We actually show the result by induction applied to the right hand side of (\ref{eq:trivial}). For $p=1$ the right hand side of (\ref{eq:trivial}) is obviously bounded by the right hand side of (\ref{eq:indup}). For the induction step $p\rightarrow p+1$ we want to split the expectations where the time difference is largest. Ordering of time indices yields
\begin{align}\label{eq:zwi}
\sum_{i_1,\ldots,i_{p+1}=1}^T&|\mathbb{E}(X_{i_1}\ldots X_{i_{p+1}})|=\sum_{1\leq i_1\leq \ldots \leq i_{p+1}}^T|\mathbb{E}(X_{i_1}\ldots X_{i_{p+1}})\gamma(i_1,\ldots,i_{p+1})|
\end{align}
where $\gamma(i_1,\ldots,i_{p+1})$ denotes the number of possible permutations which is smaller or equal $(p+1)!$. Let $j_s=i_s-i_{s-1},~s=2,\ldots,p+1$ and $j_1=i_1$, then (\ref{eq:zwi}) is bounded by
\begin{align}\label{grsum}
(p+1)!\sum_{j_1,\ldots,j_{p+1}\geq 0~ j_1+\ldots+j_{p+1}\leq T}^T|\mathbb{E}(X_{j_1}X_{j_1+j_2}\ldots X_{j_1+\ldots +j_{p+1}})|.
\end{align}
We divide the sum (\ref{grsum}) into $A_1,\ldots,A_p$ where $A_s$ contains all expectations in (\ref{grsum}) where $j_{s+1}$ is the maximum of $j_2,\ldots,j_{p+1}$\footnote{To obtain a unique partition we add summands to the sum with the smallest index, if the maximum is attained more than once} and denote $I_s$ the related index set. If $j_{s+1}$ is maximal the other indices can only assume the values $0,\ldots,j_{s+1}$ resulting in less than $(p+1)!T(j_{s+1}+1)^{p-1}$ summands for fixed $j_{s+1}.$ Proposition \ref{eq:eqmoment} yields
\begin{align}\label{divex}
|A_s|&=(p+1)!\sum_{i_1,\ldots,i_{p+1}\in I_s}|\mathbb{E}(X_{i_1}\ldots X_{i_{p+1}})|\nonumber\\
&\leq (p+1)!\sum_{i_1,\ldots,i_{p+1}\in I_s}|\mathbb{E}(X_{i_1}\ldots X_{i_{s-1}})\mathbb{E}(
X_{i_s}\ldots X_{i_{p+1}})|\\
&+(p+1)!T \sum_{j_s=0}^T \left(C_{13} a_{\lfloor j_s/3\rfloor}^{\frac{a-p+1}{a}} +C_{14}\beta_{\lfloor j_s/3\rfloor}^{\frac{a-p}{p}}\right)(j_s+1)^{p-1}\label{divex2}
\end{align}
where the sum (\ref{divex2}) is $O(T)$ by assumption. We use the induction hypothesis for (\ref{divex}) to obtain
\begin{align*}
(p+1)!&\sum_{i_1,\ldots,i_{s-1}=1}^T |\mathbb{E}(X_{i_1}\ldots X_{i_{s-1}})|\sum_{i_s,\ldots,i_{p+1}=1}^T|\mathbb{E}(X_{i_s}\ldots X_{i_{p+1}})|\\
&\leq (p+1)! (G_1T^{s-1}|\mathbb{E}(X_1)|^{s-1}+G_2T^{\lfloor \frac{s-1}{2} \rfloor})(\tilde{G}_1T^{p+2-s}|\mathbb{E}(X_1)|^{p+2-s}+\tilde{G}_2T^{\lfloor \frac{p+2-s}{2} \rfloor})\\
&\leq(p+1)!\left(G_1\tilde{G}_1T^{p+1}|\mathbb{E}(X_1)|^{p+
1}+G_2\tilde{G}_2 T^{\lfloor \frac{p+1}{2} \rfloor}\right.\\
&\left.+G_1\tilde{G}_2T^{\lfloor \frac{p+2-s}{2}\rfloor}T^{s-1}|\mathbb{E}(X_1)|^{s-1}+\tilde{G}_1G_2T^{\lfloor \frac{s-1}{2} \rfloor}T^{p+2-s}|\mathbb{E}(X_1)|^{p+2-s}\right)\\
&\leq \hat{G}_1 T^{p+1}|\mathbb{E}(X_1)|^{p+1}+\hat{G}_2 T^{\lfloor \frac{p+1}{2} \rfloor}
\end{align*}
which completes the proof.\end{proof}
There is also a version for bounded processes.
\begin{proposition}\label{eq:Bernsteinbound}
Let $(X_t)_{t \in \mathbb{N}}$ be stationary and P-NED on an absolutely regular process $(Z_{t})_{t\in \mathbb{Z}}$ with approximation constants $(a_k)_{k\in \mathbb{N}}$, functions $(f_k)_{k\in \mathbb{N}}$, error function $\Phi$ and absolutely regularity coefficients $(\beta_k)_{k\in \mathbb{N}}.$ Furthermore there exists $D\in \mathbb{R}$ such that $|X_1|<D$ a.s., $\int_0^\infty \Phi(x)dx<\infty$ and $\sum_{i=1}^\infty a_i i^{p-1}$ as well as $\sum_{i=1}^\infty \beta_i i^{p-1}<\infty$, then there exist $G_1$ and $G_2$such that
\begin{align}\label{eq:indup2}
|\mathbb{E}(\sum_{i=1}^N X_i)^p|\leq G_1(T|\mathbb{E}(X_1)|)^p+G_2T^{\lfloor p/2 \rfloor},~\forall T\in \mathbb{N}.
\end{align}
\end{proposition}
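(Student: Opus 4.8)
The plan is to reproduce the induction on $p$ from the proof of Proposition \ref{eq:Bernstein} essentially verbatim, substituting the bounded covariance inequality of Proposition \ref{eq:eqmomentbounded} for that of Proposition \ref{eq:eqmoment}. As there, I would not estimate $|\mathbb{E}(\sum_{i=1}^T X_i)^p|$ directly but the larger quantity $\Sigma_p:=\sum_{i_1,\ldots,i_p=1}^T|\mathbb{E}(X_{i_1}\cdots X_{i_p})|$, which dominates it via $|\mathbb{E}(\sum_{i=1}^T X_i)^p|\leq\sum_{i_1,\ldots,i_p}|\mathbb{E}(X_{i_1}\cdots X_{i_p})|$, and prove by induction that $\Sigma_p\leq G_1(T|\mathbb{E}(X_1)|)^p+G_2T^{\lfloor p/2\rfloor}$ with constants independent of $T$. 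The base case $p=1$ is trivial since $\Sigma_1=T|\mathbb{E}(X_1)|$. Note that $|X_1|<D$ a.s.\ together with stationarity makes every moment appearing below finite and in particular forces $|\mathbb{E}(X_1)|\leq D$.

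For the step $p\to p+1$ I would, as in the cited proof, reorganize the sum defining $\Sigma_{p+1}$ by ordering the time indices (at the cost of a factor at most $(p+1)!$), pass to the gap variables between consecutive indices, and split the resulting sum into pieces $A_1,\ldots,A_p$ according to which gap is maximal, breaking ties by the smallest index to keep the partition well defined. On each piece I would split the expectation across the maximal gap: writing $j$ for that gap, Proposition \ref{eq:eqmomentbounded} gives
\begin{align*}
|\mathbb{E}(X_{i_1}\cdots X_{i_{p+1}})|\leq|\mathbb{E}(X_{i_1}\cdots X_{i_s})|\,|\mathbb{E}(X_{i_{s+1}}\cdots X_{i_{p+1}})|+D_1\bigl(a_{\lfloor j/3\rfloor}+\beta_{\lfloor j/3\rfloor}\bigr),
\end{align*}
with $D_1$ independent of $j$; this use of Proposition \ref{eq:eqmomentbounded} is legitimate because $|X_1|<D$ and $\int_0^\infty\Phi(x)\,dx<\infty$. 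Since for a fixed maximal gap $j$ there are at most $(p+1)!\,T(j+1)^{p-1}$ admissible index configurations, summing the error term over all configurations and then over $j$ yields a contribution bounded by a constant times $T\sum_{j\geq0}(a_{\lfloor j/3\rfloor}+\beta_{\lfloor j/3\rfloor})(j+1)^{p-1}$, which is $O(T)$ precisely because the bounded hypotheses supply $\sum_i a_i i^{p-1}<\infty$ and $\sum_i\beta_i i^{p-1}<\infty$. This is the one substantive place where the present assumptions differ from those of Proposition \ref{eq:Bernstein}: boundedness removes the fractional exponents $(a-p+1)/a$ and $(a-p)/a$ that appear there, so one only needs summability of the plain gap sequences.

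The surviving product term on $A_s$ factorizes as $\Sigma_s\cdot\Sigma_{p+1-s}$ with both indices in $\{1,\ldots,p\}$, so the induction hypothesis applies to each factor. Multiplying out $\bigl(G_1(T|\mathbb{E}(X_1)|)^s+G_2T^{\lfloor s/2\rfloor}\bigr)\bigl(\tilde G_1(T|\mathbb{E}(X_1)|)^{p+1-s}+\tilde G_2T^{\lfloor(p+1-s)/2\rfloor}\bigr)$ produces the leading term $(T|\mathbb{E}(X_1)|)^{p+1}$, the term $T^{\lfloor s/2\rfloor+\lfloor(p+1-s)/2\rfloor}\leq T^{\lfloor(p+1)/2\rfloor}$, and two mixed terms, and I would absorb these together with the $O(T)$ error into $\hat G_1T^{p+1}|\mathbb{E}(X_1)|^{p+1}+\hat G_2T^{\lfloor(p+1)/2\rfloor}$ by exactly the elementary estimates used at the end of the proof of Proposition \ref{eq:Bernstein}, employing $|\mathbb{E}(X_1)|\leq D$ to control the surplus powers of the mean and $\lfloor a/2\rfloor+\lfloor b/2\rfloor\leq\lfloor(a+b)/2\rfloor$ for the powers of $T$. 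The main obstacle is bookkeeping rather than analysis: one must keep all constants uniform in $T$ and in the gap $j$ (guaranteed by the uniformity of $D_1$ in Proposition \ref{eq:eqmomentbounded} and by the summability of the gap sequences), and one must check that the mixed terms carry the claimed order, which is the only delicate point and is handled exactly as in the unbounded case.
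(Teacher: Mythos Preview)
Your proposal is correct and follows exactly the route the paper indicates: the paper's own proof of this proposition is just the single sentence that it is ``completely analogous to that of Proposition \ref{eq:Bernstein} using Proposition \ref{eq:eqmomentbounded} instead of Proposition \ref{eq:eqmoment}'', and you have spelled out precisely that substitution, correctly identifying that boundedness removes the fractional exponents and reduces the summability requirement to $\sum a_i i^{p-1}<\infty$, $\sum \beta_i i^{p-1}<\infty$.
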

The proof is completely analogous to that of Proposition \ref{eq:Bernstein} using Proposition
\ref{eq:eqmomentbounded} instead of Proposition \ref{eq:eqmoment}.\\

The proof of Theorem \ref{T1} consists of four steps
\begin{enumerate}
\item the mean function of $(W_{N,T}(x))_{x\in [0,1]}$ converges to that of $(\Gamma(x))_{x\in [0,1]}$,
\item the covariance function of $(W_{N,T}(x))_{x\in [0,1]}$ converges to that of $(\Gamma(x))_{x\in [0,1]}$,
\item all finite dimensional distributions of $(W_{N,T}(x))_{x\in [0,1]}$ converge against a multivariate normal distribution,
\item $(W_{N,T}(x))_{x\in [0,1]}$ is tight.
\end{enumerate} 
Let without loss of generality $\mathbb{E}(Y_{i,1})=0$ and denote $\gamma_{i}(h)=\mathbb{E}(Y_{i,t}Y_{i,t+h})$ the autocovariance of lag $h\in \mathbb{N}.$ Furthermore we want to concentrate on the bounded case $|Y_{i,1}|<c_1,~i=1\ldots,N.$ The proof in the unbounded case is completely analogous. However, the covariance inequalities then also depend on $a$, making calculations a little more extensive and harder to understand.  
\begin{proof}[Step 1 of the proof of Theorem \ref{T1}]
First we look at one individual $i.$ Let $x,~0<x<1,$ be arbitrary and $k=\lfloor Tx\rfloor$
\begin{align*}
\mathbb{E}([S_T^{(i)}(x)]^2)&=\frac{1}{Tv_i^2}\mathbb{E}\left(\left[\frac{T-k}{T}\sum_{t=1}^{k}Y_{i,t}-\frac{k}{T}\sum_{t=k+1}^TY_{i,t}\right]^2\right)\\
&=\frac{1}{Tv_i^2}\left(\frac{T-k}{T}\right)^2\underbrace{\mathbb{E}\left(\left[\sum_{t=1}^kY_{i,t}\right]^2\right)}_{A1}\\
&-\frac{1}{Tv_i^2}2\frac{(T-k)k}{T^2}\underbrace{\mathbb{E}\left(\left[\sum_{t=1}^kY_{i,t}\right)\left(\sum_{t=k+1}^TY_{i,t}\right]\right)}_{A_2}
+\frac{1}{Tv_i^2}\frac{k^2}{T^2}\underbrace{\mathbb{E}\left(\left[\sum_{t=k+1}^TY_{i,t}\right)^2\right]}_{A3}.
\end{align*}
Elementary calculations yield
\begin{align*}
A_1&=kv_i^2-2k\sum_{h=k}^\infty \gamma_i(h)-2\sum_{h=1}^{k-1}h\gamma_i(h)\\
A_2&=\sum_{h=1}^T\gamma_i(h)\min(h,k,T-k,T-h)\\
A_3&=(T-k)v_i^2-2(T-k)\sum_{h=T-k}^\infty \gamma_i(h)-2\sum_{h=1}^{T-k-1}h\gamma_i(h)
\end{align*}
and therefore
\begin{align*}
\mathbb{E}(W_{N,T}(x))&=\frac{\sqrt{N}}{T}\frac{1}{N}\sum_{i=1}^N \left(\frac{1}{v_i^2}\left(\frac{T-k}{T}\right)^2\left(-2k\sum_{h=k}^\infty \gamma_i(h)-2\sum_{h=1}^{k-1}h\gamma_i(h)\right)\right.\\
&+\frac{1}{v_i^2}\frac{(T-k)k}{T^2}\sum_{h=1}^T\gamma_i(h)\min(h,k,T-k,T-h)\\
&\left.+\frac{1}{v_i^2}\frac{k^2}{T^2}
\left(-2(T-k)\sum_{h=T-k}^\infty \gamma_i(h)-2\sum_{h=1}^{T-k-1}h\gamma_i(h)\right)\right).
\end{align*}
Using Assumption 1 2) a) ii) one has:
\begin{align*}
|\mathbb{E}(W_{N,T}(x))|&\leq \frac{\sqrt{N}}{T}\frac{1}{N}\sum_{i=1}^N \frac{C_{15}}{\delta^2}\left( 5\underbrace{\sum_{h=1}^\infty h (1+h)^{-b}}_{B_1}+2\underbrace{k\sum_{h=k}^\infty (1+h)^{-b}}_{B_2}+2\underbrace{(T-k)\sum_{h=T-k}^\infty (1+h)^{-b}}_{B_3}\right)
\end{align*}
where by the integral criteria for sums
\begin{align*}
B_1\leq \int_0^\infty (1+h)^{-b+1} dh=\frac{1}{b-2}\mbox{~and~}
B_2\leq k \int_{k-1}^\infty c(1+h)^{-b} dh=\frac{k^{-b+2}}{b-1}\leq \frac{1}{b-1}.
\end{align*}
and analogously $B_3\leq \frac{(T-k)^{-b+2}}{b-1}$.
By Assumption i) of Theorem \ref{T1} $|\mathbb{E}(W_{N,T}(x))|\rightarrow 0$ which gives the desired mean structure.
\end{proof}
\begin{proof}[Step 2 of the proof of Theorem \ref{T1}]
Calculating the covariance structure is more tedious. By Assumption 1 1) we have
\begin{align*}
\mbox{Cov}(W_{N,T}(x),W_{N,T}(y))=\frac{1}{N}\sum_{i=1}^N Cov([S^{(i)}(x)]^2,[S^{(i)}(y)]^2),
\end{align*}
so it is enough to compute the covariance structure of one individual $i,~i=1,\ldots,N.$
We denote therefore $k=[xn] < [yn]=j$ and expand $E([S^{(i)}(x)]^2[S^{(i)}(y)]^2)$ to obtain
\begin{align*}E([S^{(i)}(x)]^2[S^{(i)}(y)]^2)&=\frac{(T-k)^2(T-j)^2}{T^6v_i^4}\mathbb{E}\left(\sum_{s,t,u,v=1}^{k,k,j,j}Y_{i,t}Y_{i,s}Y_{i,u}Y_{i,v}\right)\\
&-2\frac{(T-k)^2(T-j)j}{T^6v_i^4}\mathbb{E}\left(\sum_{s,t,u=1,v={j+1}}^{k,k,j,T}Y_{i,s}Y_{i,t}Y_{i,u}Y_{i,v}\right)
\\
&+\frac{(T-k)^2j^2}{T^6v_i^4}\mathbb{E}\left(\sum_{s,t=1,u,v=j+1}^{k,k,T,T}Y_{i,s}Y_{i,t}Y_{i,u}Y_{i,v}\right)
\\
&-2\frac{(T-k)k(T-j)^2}{T^6v_i^4}\mathbb{E}\left(\sum_{s=1,t=k+1,u,v=1}^{k,T,j,j}Y_{i,s}Y_{i,t}Y_{i,u}Y_{i,v}\right)
\\
&+4\frac{(T-k)k(T-j)j}{T^6v_i^4}\mathbb{E}\left(\sum_{s=1,t=k+1,u=1,v={j+1}}^{k,T,j,T}Y_{i,s}Y_{i,t}Y_{i,u}Y_{i,v}\right)
\\
&-2\frac{(T-k)kj^2}{T^6v_i^4}\mathbb{E}\left(\sum_{s=1,t=k+1,u,v={j+1}}^{k,T,T,T}Y_{i,s}Y_{i,t}Y_{i,u}Y_{i,v}\right)\\
&+\frac{k^2(T-j)^2}{T^6v_i^4}\mathbb{E}\left(\sum_{s,t=k+1,u,v=1}^{T,T,j,j}Y_{i,s}Y_{i,t}Y_{i,u}Y_{i,v}\right)\\
&-2\frac{k^2(T-j)j}{T^6v_i^4}\mathbb{E}\left(\sum_{s,t=k+1,u=1,v={j+1}}^{T,T,j,T}Y_{i,s}Y_{i,t}Y_{i,u}Y_{i,v}\right)
\\
&+\frac{k^2j^2}{T^6v_i^4}\mathbb{E}\left(\sum_{s,t=k+1,u,v={j+1}}^{T,T,T,T}Y_{i,s}Y_{i,t}Y_{i,u}Y_{i,v}\right)
\\
&=A_{i,1}+A_{i,2}+A_{i,3}+A_{i,4}+A_{i,5}+A_{i,6}+A_{i,7}+A_{i,8}+A_{i,9}.
\end{align*}
Exemplarily we look at $A_{i,1}$ which we divide into
\begin{align*}
A_{i,1}&=\frac{(T-k)^2(T-j)^2}{T^6v_i^4}\left[\mathbb{E}\left(\sum_{s,t,u,v=1}^{k,k,k,k}Y_{i,t}Y_{i,s}Y_{i,u}Y_{i,v}\right)+\mathbb{E}\left(\sum_{s,t=1,u,v=k+1}^{k,k,j,j}Y_{i,t}Y_{i,s}Y_{i,u}Y_{i,v}\right)\right]\\
&=\frac{(T-k)^2(T-j)^2}{T^6v_i^4}(B_{i,1}+B_{i,2}).
\end{align*}
We split $B_{1,i}$ further in parts which are substantial and parts which are negligible. Ordering such that $t\leq s\leq u\leq v$ and substituting $s-t=l,~u-s=m,~v-u=n$ yields
\begin{align*}
B_{1,i}=\sum_{t+l+m+n\leq k}\mathbb{E}\left(Y_{i,t}
Y_{i,t+s}Y_{i,t+s+m}Y_{i,t+s+m+n}\right) a(s,m,n)
\end{align*}
where $a(s,m,n)$ equals the number of permutations of $Y_{i,t}Y_{i,s+t}Y_{i,t+s+m}Y_{i,t+s+m+n}$. Now we want to apply Proposition \ref{eq:eqmomentbounded} and split the expectations where the lag difference between the random variables is largest: 
\begin{align*}
B_{1,i}
&=\sum_{t+s+m+n\leq k,~m,n< s}\mathbb{E}(Y_{i,t})\mathbb{E}(Y_{i,t+s}Y_{i,t+s+m}Y_{i,t+s+m+n})a(s,m,n)+R_{1,i}\\\
&+\sum_{t+s+m+n\leq k,~ s,n \leq m}\mathbb{E}(Y_{i,t}Y_{i,t+s})\mathbb{E}(Y_{i,t+s+m}Y_{i,t+s+m+n})a(s,m,n)+R_{2,i}\\
&+\sum_{t+s+m+n\leq k,~s,m< n}\mathbb{E}(Y_{i,t}Y_{i,t+s}Y_{i,t+s+m})\mathbb{E}(Y_{i,t+s+m+n})a(s,m,n)+R_{3,i}\\
&=R_{1,i}+R_{2,i}+R_{3,i}\\
&+\sum_{t+s+m+n\leq k,~ s,n \leq m}\mathbb{E}(Y_{i,t}Y_{i,t+s})\mathbb{E}(Y_{i,t+s+m}Y_{i,t+s+m+n})a(s,m,n)
\end{align*}
where by Proposition \ref{eq:eqmomentbounded} and the integral criteria
\begin{align*}
|R_{1,i}+R_{2,i}+R_{3,i}|&\leq 3 \cdot 24 C_{16}\sum_{t+s+m+n\leq k,~m,n\leq s} (1+s)^{-b}\\
&\leq 72 kC_{16}\sum_{s=0}^\infty (s+1)^2 c(1+s)^{-b} \leq \frac{ k C_{17}}{b-3}
\end{align*}
and therefore $\sup_{i=1,\ldots,N} \frac{(T-k)^2(T-j)^2}{T^6v_i^4}|R_{1,i}+R_{2,i}+R_{3,i}|\rightarrow 0$.
Now we turn towards the non vanishing part of $B_{1,i}.$ We change summation another time to arrive at
\begin{align}\label{versum}
&\sum_{t+s+m+n\leq k,~ s,n \leq m}\mathbb{E}(Y_{i,t}Y_{i,t+s})\mathbb{E}(Y_{i,t+s+m}Y_{i,t+s+m+n})a(l,m,n)\nonumber\\
&=\sum_{s=0}^k\sum_{n=0}^k3\sum_{m=\max(s,n)}^{k-s-n} (k-s-n-\max(s,n)-m)\gamma_i(s)\gamma_i(n)b(s,n)\nonumber\\
&=3\sum_{s=0}^k\sum_{n=0}^k\gamma_i(s)\gamma_i(n)b(s,n)\left((k-s-n-\max(s,n)+1)(k-s-n-\max(s,n))\right.\nonumber\\
&\left.-\frac{(k-s-n+1)(k-s-n)}{2}+\frac{\max(s,n)(\max(s,n)-1)}{2}\right)=\frac{3}{2}k^2v_i^4+3U_i
\end{align}
where $b(i,j)=\begin{cases}1&i,j=0\\
2~~ \mbox{for}&(i,j)=(0,1)\wedge (1,0)\\
4&i,j>0\end{cases}$\\ and the factor 3 appears since there are three possibilities to partition four random variables into two pairs.
The error $U_i$ consists of a sum of autocovariances which do not appear in \ref{versum} and one of autocovariances of improper quantity
\begin{align*}
|U_i|&\leq 12 k^2 \Big \vert \sum_{m=0}^\infty\sum_{n=k+1}^\infty \gamma_i(m)\gamma_i(n) \Big \vert +12\Big \vert \sum_{s=0}^k\sum_{n=0}^k \gamma(s)\gamma(n)\left[5k(s+n)+(s+n)^2\right] \big\vert\\
&\leq C_{18} k^2 \int_{k}^\infty (1+s)^{-b} ds+C_{19} k\left(\int_0^\infty (1+s)^{-b+1} ds\right)^2 +C_{20} \left(\int_0^\infty (1+s)^{-b+2} ds\right)^2 \\
&=C_{18}\frac{k^{-b+3}}{b-1}+C_{19}k\left(\frac{1}{b-2}\right)^2+C_{20}\left(\frac{1}{b-3}\right)^2
\end{align*}
where the squared integrals arise through the integral criteria applied to the separated sums. Therefore we get
$\sup_{i=1,\ldots,N} \frac{(T-k)^2(T-j)^2}{T^6v_i^4}|U_i|\rightarrow 0$.
By analogous calculations one gets
\begin{align*}
A_{1,1}&=\frac{(T-k)^2(T-j)^2(3k^2+k(j-k)}{2T^6}+R_{i,4}\\
A_{i,2}&=R_{i,5}\\
A_{i,3}&=\frac{(T-k)^2j^2k(T-j)}{2T^6}+R_{i,6}\\
A_{i,4}&=-2\frac{(T-k)k(T-j)^2 2k(j-k)}{2T^6}+R_{i,7}\\
A_{i,5}&=4\frac{(T-k)k(T-j)^2jk}{2T^6}+R_{i,8}\\
A_{i,6}&=R_{i,9}\\
A_{i,7}&=\frac{k^2(T-j)^2((T-j)(j-k)+k(j-k)+(T-j)k+3(j-k)^2)}{2T^6}+R_{i,10}\\
A_{i,8}&=-2\frac{k^2(T-j)^2j2(j-k)}{2T^6}+R_{i,11}\\
A_{i,9}&=\frac{k^2j^2(3(T-j)^2+(T-j)(j-k))}{2T^6}+R_{i,12}
\end{align*}
with $\sup_{i\in 1,\ldots,N} \sum_{k=4}^{12}|R_{i,k}|\rightarrow 0.$
Term manipulations yield the desired covariance structure.
\end{proof}
\begin{proof}[Step 3 of the Proof of Theorem \ref{T1}]
We show convergence of finite dimensional distributions. Let $k\in \mathbb{N}$ and $x_1,\ldots,x_k$ be arbitrary, by the Cramer Wold device it is sufficient (and necessary) to show convergence of linear combinations for arbitrary $\lambda_1,\ldots,\lambda_k$. By change of summation we get 
\begin{align*}
\sum_{j=1}^k \lambda_j W_{N,T}(x_j)=\frac{1}{N}\sum_{i=1}^N \underbrace{\sum_{j=1}^k\lambda_j \left(\left(S_T^{(i)}(x_j)\right)^2-\frac{\lfloor x_j T \rfloor(T-\lfloor x_j T \rfloor)}{T^2}\right)}_{D_i}
\end{align*}
where $D_i$ are independent but not identically distributed. Since $D_i$ also depends on $T,$ we apply a central limit theorem for random arrays of Lyapunov type (see for example \cite{serfling} p. 30). Therefore one has to show
\begin{align}\label{lapunov}
\frac{\sum_{i=1}^N\mathbb{E}\left(\sum_{j=1}^k \lambda_j[ S_i^2(x_j)-\mathbb{E}\{S_i^2(x_j)\}]\right)^4}
{\left(\sum_{i=1}^N\mathbb{E}\left[\sum_{j=1}^k \lambda_j\left\{ S_i^2(x_j)-\mathbb{E}(S_i^2(x_j))\right\}\right]^2\right)^2}\rightarrow 0.
\end{align}
For the nominator we repeatedly apply the $c_r$ inequality and Proposition \ref{eq:Bernsteinbound} to get
\begin{align*}
\sum_{i=1}^N&\mathbb{E}\left(\sum_{j=1}^k \lambda_j[ S_i^2(x_j)-\mathbb{E}\{S_i^2(x_j)\}]\right)^4 \\
&\leq
8^k\sum_{j=1}^k \lambda_j ^4\sum_{i=1}^N  \mathbb{E} ( S_i^2(x_j)-\mathbb{E}[S_i^2\{x_j\}])^4 \\
&\leq 8^{k+1}\sum_{j=1}^k \lambda_j ^4\sum_{i=1}^N  \mathbb{E} ( S_i^8(x_j))+\mathbb{E}(S_i^2(x_j))^4)\\
&\leq 8^{k+2}\sum_{j=1}^k \lambda_j ^4\sum_{i=1}^N  \left(\frac{1}{T^4}\mathbb{E} \left[\sum_{t=1}^{\lfloor x_jT \rfloor} Y_{i,t}\right]^8+\frac{1}{T^4}\mathbb{E} \left[\sum_{t=\lfloor x_jT \rfloor+1}^{T} Y_{i,t}\right]^8 \right.\\
&\left.+\frac{1}{T^4}\left[\mathbb{E}\left(\sum_{t=1}^{\lfloor x_j T \rfloor} Y_{i,t}\right)^2\right]^4+\frac{1}{T^4}\left[\mathbb{E}\left(\sum_{t=\lfloor x_j T \rfloor+1}^{T} Y_{i,t}\right)^2\right]^4\right)\\
&\leq 8^{k+2}\sum_{j=1}^k \lambda_j ^4 N (G_2 + \tilde{G}_2^4).
\end{align*}
For the denominator we exploit the cross-sectional independence and arrives at
\begin{align*}
\left(\sum_{i=1}^N\mathbb{E}\left(\sum_{j=1}^k \lambda_j( S_i^2(x_j)-\mathbb{E}(S_i^2(x_j)))\right)^2\right)^2&=\left(\sum_{i=1}^N \sum_{j,l=1}^k\lambda_j\lambda_l\mbox{Cov}(S_i^2(x_j),S_i^2(x_l))\right)^2\\
&=\left(\sum_{i=1}^N \underbrace{\sum_{j,l=1}^k \lambda_j,\lambda_l\mbox{Cov}(\Gamma(x_j),\Gamma(x_l))}_M +R_i\right)^2\\
&=M^2N^2+2MN \sum_{i=1}^NR_i+( \sum_{i=1}^NR_i)^2
\end{align*}
where $R_i$ denotes the remainder fulfilling $\sup_{i=1,\ldots,N}R_i\rightarrow 0$, see the second step of the proof of Theorem 1. Since the Gaussian process $\Gamma$ possesses a positive definite covariance function, we have $M>0$ and the denominator grows of the order $N^2$ while the nominator only grows linearly in $N$, which proofs (\ref{lapunov}) and hence the asymptotic normality. Together with step 1 and step 2 this proves that the finite dimensional distributions of $(W_{N,T}(x))_{x \in [0,1]}$ converge against that of $(\Gamma(x))_{x\in [0,1]}.$
\end{proof}
\begin{proof}[Step 4 of the Proof of Theorem \ref{T1}]
We want to apply the moment criteria of \cite{Billingsley} (see page 95) and therefore look at the difference between $W_{N,T}(x)$ and $W_{N,T}(y)$. By an expansion we see that we need bounds for four different moments
\begin{align*}
\mathbb{E}([W_{N,T}(x)-W_{N,T}(y)]^4)&=\frac{1}{N^2}\mathbb{E}\left(\left[\sum_{i=1}^N \underbrace{S_i^2(x)-\frac{\lfloor x T \rfloor(T-\lfloor x T \rfloor)}{T^2}-S_i^2(x)+\frac{\lfloor x T \rfloor(T-\lfloor x T \rfloor)}{T^2}}_{M_i}\right]^4\right)\\
&=\frac{1}{N^2}\sum_{i\neq j\neq k\neq l}\mathbb{E}(M_i)\mathbb{E}(M_j)\mathbb{E}(M_k)\mathbb{E}(M_l)
+\frac{1}{N^2}\sum_{i\neq j\neq k}\mathbb{E}(M_i^2)\mathbb{E}(M_j)\mathbb{E}(M_k)\\
&+\frac{1}{N^2}\sum_{i\neq j}\mathbb{E}(M_i^3)\mathbb{E}(M_j)+\frac{1}{N^2}\sum_{i\neq j}\mathbb{E}(M_i^2)\mathbb{E}(M_j^2)+\frac{1}{N^2}\sum_{i=1}^N\mathbb{E}(M_i^4).
\end{align*}

In the first step of the proof of Theorem \ref{T1} it it shown that 
\begin{align*}
|\mathbb{E}(M_i)|\leq \bigg|\mathbb{E}\left(S_i^2(x)-\frac{\lfloor x T \rfloor(T-\lfloor x T \rfloor)}{T^2}\right)\bigg|+ \bigg| \mathbb{E}\left(S_i^2(y)+\frac{\lfloor y T \rfloor(T-\lfloor y T \rfloor)}{T^2}\right)\bigg|\leq \frac{C_{21}}{T},
\end{align*}
where $C_{21}$ is independent of $i,~x$ and $y$. Let $\lfloor Tx\rfloor=u < \lfloor Ty\rfloor=v,$ for $\mathbb{E}(M_i^2)$ we apply Cauchy-Schwarz and $c_r$ inequality to get
\begin{align*}
\mathbb{E}(M_i^2)&\leq 2\mathbb{E}([S_i(x)^2-S_i(y)^2]^2)+2\left(\frac{\lfloor x T \rfloor(T-\lfloor x T \rfloor)}{T^2}-\frac{\lfloor y T \rfloor(T-\lfloor y T \rfloor)}{T^2}\right)^2\\
&\leq  \sqrt{\mathbb{E}([S_i(x)-S_i(y)]^4)}\sqrt{8\mathbb{E}([S_i(x)]^4)+8\mathbb{E}([S_i(y)]^4)}+2\left(\frac{(v-u)(u-T-v)}{T^2}\right)^2.
\end{align*}
Using Proposition \ref{eq:Bernsteinbound} one has
\begin{align*}
|\mathbb{E}(S_i(x)^4)|\leq\frac{8}{T^2\delta^4}\mathbb{E}\left(\frac{T-u}{T}\sum_{t=1}^u Y_{i,t}\right)^4+\frac{8}{T^2\delta^4}\mathbb{E}\left(\frac{u}{T}\sum_{t={u+1}}^T Y_{i,t}\right)^4\leq C_{22}
\end{align*}
and also
\begin{align*} 
\mathbb{E}([S_i(x)-S_i(y)]^4)&=\frac{8}{T^2\delta^4}\mathbb{E}\left(\left[\sum_{t=u+1}^v Y_{i,t}\right]^4\delta^4\right)+\frac{8(v-u)^4}{T^6}\mathbb{E}\left(\sum_{t=1}^T Y_{i,t}\right)^4\leq C_{23}\frac{(u-v)^2}{T^2}.
\end{align*}
Together we have $\mathbb{E}(M_i^2)\leq C_{24} \frac{v-u}{T}$ and analogously $\mathbb{E}(M_i^4)\leq C_{25} \frac{(v-u)^2}{T^2}$ respectively $|\mathbb{E}(M_i^3)|\leq C_{26}\frac{(v-u)^{3/2}}{T^{3/2}}.$ Since $N/T\rightarrow 0$, there is $C_{27}$ such that $N/T\leq C_{27}$\footnote{To emphasise that $N,T$ jointly tend to infinity it is maybe more convenient to use $N(T)$ instead of $N$ (respectively $T(N)$ instead of $T$). We have forgone on it for a better readability. However, the more elaborate notation is here superior. Since what is meant and used is that $N(T)/T\leq C_{27}, ~\forall T\in  \mathbb{N}$.} and we arrive at
\begin{align*}
\mathbb{E}([W_{N,T}(x)-W_{N,T}(y)]^4)&\leq \frac{1}{N^2}N^4\left(\frac{C_{21}}{T}\right)^4+\frac{1}{N^2}N^3C_{24}\frac{v-u}{T}\left(\frac{C_{21}}{T}\right)^2\\
&+\frac{1}{N^2}N^2 C_{26}\frac{(v-u)^{3/2}}{T^{3/2}}\frac{C_{21}}{T}
+\left(C_{24}\frac{v-u}{T}\right)^2+\frac{C_{25}}{N}\frac{(v-u)^2}{T^2}\\
&\leq C_{28} \frac{(v-u)^2}{T^2}+\frac{C_{29}}{T^2}+\frac{C_{30}(v-u)}{T^2}+\frac{C_{31}(v-u)^{3/2}}{T^{5/2}}\leq C_{32} |x-y|^2
\end{align*}
which proves tightness of the process.
\end{proof}

\begin{proof}[Proof of Theorem \ref{T2}]
We assume that $|Y_{i,1}|\leq c_1,~i=1,\ldots,N,$ though the proof is completely analogous in the unbounded case. We first calculate the mean squared error of $\hat{v}_i$. Equations for variance and bias of $\hat{v}_i$ are already known, see for example \cite{anderson} chapters eight and nine, though these are for known location. Denote therefore
\begin{align*}
\tilde{v}_i=\tilde{\gamma}_{i,0}+2\sum_{h=1}^{b_{T}}\tilde{\gamma}_{i,h}k\left(\frac{h}{b_{T}}\right)\mbox{~~with~~}
\tilde{\gamma}_i(h)=\frac{1}{T}\sum_{t=1}^{N-h}\left(Y_{i,t}-\mathbb{E}(Y_{i,1})\right)\left(Y_{i,t+h}-\mathbb{E}(Y_{i,1})\right)
\end{align*}
the long run variance with known location, where we continue to assume that $\mathbb{E}(Y_{i,1})=0,~i=1,\ldots,N.$
We denote $\overline{Y}_i=\frac{1}{T}\sum_{t=1}^T Y_{i,t}$ and describe $\hat{v}_i$ by $\tilde{v}_i$:
\begin{align*}
\hat{v_i}^2
&=\tilde{v}_i^2-\frac{1}{T}\sum_{t=1}^TY_{i,t}\overline{Y}_i-2\sum_{h=1}^{b_T} \frac{1}{T}\sum_{t=1}^{T-h}Y_{i,t+h}\overline{Y}_{i} k\left(\frac{h}{b_T}\right)\\
&-\frac{1}{T}\sum_{t=1}^TY_{i,t}\overline{Y}_i-2\sum_{h=1}^{b_T} \frac{1}{T}\sum_{t=1}^{T-h}Y_{i,t}\overline{Y}_{i} k\left(\frac{h}{b_T}\right)+\sum_{h=-b_T}^{b_T}\frac{1}{T}\overline{Y_i}^2k\left(\frac{h}{b_T}\right)\\
&=\tilde{v_i}^2+R_1+R_2+R_3+R_4+R_5.
\end{align*} 
Following the calculations of step 2 of the proof of Theorem \ref{T1} one finds upper bounds for the errors $R_1+R_2$ respectively $R_3+R_4$
\begin{align*}
E([R_1+R_2]^2)&\leq \frac{1}{T^4}6\sum_{h=0}^{b_T}\sum_{k=-b_T}^{b_T}(T^2v_i^2+C_{33}T) k\left(\frac{h}{b_T}\right)k\left(\frac{k}{b_T}\right)\leq C_{34}\frac{b_T^2}{T^2}
\end{align*}
and Proposition \ref{eq:Bernsteinbound} yields
\begin{align*}
E(R_3^2)\leq \frac{1}{T^6}\sum_{h=-b_T}^{b_T}\sum_{k=-b_T}^{b_T}T^2 C_{35} k\left(\frac{h}{b_T}\right)k\left(\frac{k}{b_T}\right)\leq \frac{b_T^2}{T^4}C_{36}.
\end{align*}
Therefore we get
\begin{align}\label{vermittel}
\mathbb{E}\left([\hat{v}_i^2-\tilde{v}_i^2]^2\right)\leq C_{37} \frac{b_T^2}{T^2}.
\end{align}
In the next step we calculate the bias of $\tilde{v}_i^2$. straightforward calculations yield:
\begin{align*}
|E(\tilde{v}_i^2)-v_i^2|&=\bigg|E\left(\frac{1}{T}\sum_{t=1}^TY_{i,t}^2+2\frac{1}{T}\sum_{h=1}^{b_T}\sum_{t=1}^{T-h}Y_{i,t}Y_{i,t+h}k\left(\frac{h}{b_T}\right)\right)-\sum_{h=-\infty}^\infty \gamma_i(h) \bigg|\\
&\leq \bigg|2\sum_{h=1}^{b_T}\left( \left(1-\frac{h}{T}\right)k\left(\frac{h}{b_T}\right)-1\right) \gamma_i(h)\bigg|+2\bigg|\sum_{h=b_T+1}^\infty \gamma_i(h)\bigg|\\
&\leq 2\bigg|\sum_{h=1}^{b_T}\left(1- k\left(\frac{h}{b_T}\right)\right)\gamma_i(h)\bigg|+2\bigg|\sum_{h=1}^{b_T} \gamma_i(h) k\left(\frac{h}{b_T}\right)\frac{h}{T}\bigg|+2\bigg|\sum_{h=b_T+1}^\infty  \gamma_i(h)\bigg|\\
&=A_1+A_2+A_3.
\end{align*}
The sum $A_1$ describes the error which is generated by the kernel. To bound the error, we develop the kernel $k$ around 0. Since the first $m-1$ derivatives are 0 by Assumption 2 iv), the first non-vanishing term is of order $m.$ However to ensure that the remainder is negligible, we only develop the Taylor series up to order $s-1:$ 
\begin{align}
|A_1|&\leq \sum_{h=1}^{b_T}C_{38}(1+h)^{-b} \left( \frac{h}{b_T}\right)^s \leq \frac{1}{b_T^s}C_{39} \sum_{h=1}^\infty (1+h)^{-b+s}=\frac{1}{b_T^s}C_{40}.
\end{align}
Assumption 1 2) a) ii) and the integral criterion yields
$|A_2|\leq \frac{C_{41}}{T}~\mbox{and}~|A_3|\leq C_{42} b_T^{-b+1}$ and therefore 
\begin{align}\label{biaslong}
|E(\tilde{v}_i^2)-v_i^2|\leq \frac{1}{b_T^s}C_{40}+\frac{C_{41}}{T}+C_{42} b_T^{-b+1}.
\end{align}

Now we turn our attention to the variance of $\tilde{v}_i$, where the following expansion is known (see for example \cite{anderson}, page 528, chapter 9.3.3)
\begin{align*}
Var(\tilde{v}_i)&=\frac{1}{T}\sum_{g,h=-b_T}^{b_T}k\left(\frac{h}{b_T}\right)k\left(\frac{g}{b_T}\right)\sum_{r=-T+1}^{T-1}\phi(r,g,h)\\
&\cdot \left(\gamma_i(r)\gamma_i(r+h-g)+\gamma_i(r-g)\gamma_i(r+g)+\kappa_i(h,-r,g-r)\right)\\
&=B_1+B_2+B_3
\end{align*} 
where $\kappa_i(r,s,t)=\mathbb{E}(Y_{i,1}Y_{i,r+1}Y_{i,s+1},Y_{i,t+1})-\gamma(r)\gamma(t-r)-\gamma(s)\gamma(t-r)-\gamma(t)\gamma(s-r)$ for $r,s,t\in \mathbb{N}$ is the fourth order cumulant and the formal definition of $\phi(r,g,h)$ can be found on page 528 of \cite{anderson}. For us it is only important that $|\phi(r,g,h)|\leq 1.$
Following the calculations in \cite{anderson} we get
\begin{align*}
|B_2|&\leq \left| \frac{1}{T}\sum_{g,h=-b_T}^{b_T}\sum_{r=\max(g-b_T,h-b_T,-T+1)}^{\min(g+b_T,h+b_T,T-1)}\phi_T(r,r-g,h-r)k\left(\frac{r-g}{b_T}\right)k\left(\frac{h-r}{b_T}\right)\gamma_i(g)\gamma_i(h)\right|\\
&+\left( \frac{8b_T}{T}+\frac{4}{b_TT}\right)\sum_{g=-\infty}^\infty \sum_{h=b_T+1}^\infty R |\gamma_i(g)\gamma_i(h)|=D_1+D_2
\end{align*}
where Proposition \ref{eq:eqmomentbounded} yields $|D_1|\leq C_{43} \frac{b_T}{T}$, $|D_2|\leq C_{44}\frac{b_T}{T}$ and similar arguments reveal $|B_1|\leq C_{45}\frac{b_T}{T}.$ We rearrange the sum of cumulants  and split the expectation with Proposition \ref{eq:eqmomentbounded} where the lag difference is largest
\begin{align*}
|B_3|&\leq \frac{8R}{T} \sum_{r,s,t=0}^\infty |\kappa(r,s,t)|\\
&\leq \frac{24R}{T}\sum_{r,s,t=0}^\infty |\kappa(r,r+s,r+s+t)|\\
&\leq \frac{24R}{T} \sum_{r=0}^\infty \sum_{s,t\leq r} |\mathbb{E}(Y_{i,1})\mathbb{E}(Y_{i,1+r}Y_{i,1+r+s}Y_{i,1+r+s+t})-\gamma_i(r)\gamma_i(t)|\\
&+\frac{24R}{T} \sum_{s=0}^\infty \sum_{r,t\leq s} |\mathbb{E}(Y_{i,1}Y_{i,r+1})\mathbb{E}(Y_{i,1+r+s}Y_{i,1+r+s+t})-\gamma_i(r)\gamma_i(t)|\\
&+\frac{24R}{T} \sum_{t=0}^\infty \sum_{r,s\leq t} |\mathbb{E}(Y_{i,1}Y_{i,r+1}Y_{i,1+r+s})\mathbb{E}(Y_{i,1+r+s+t})-\gamma_i(r)\gamma_i(t)|\\
&+\frac{24R}{T}\sum_{r,s,t=0}^\infty |\gamma_i(r+s)\gamma_i(s+t)|+\frac{24R}{T}\sum_{r,s,t=0}^\infty |\gamma_i(r+s+t)\gamma_i(s)|=F_1+F_2+F_3+F_4+F_5.
&\end{align*}
We look exemplarily at $F_1,F_2$ and $F_4$: 
\begin{align*}
F_1&\leq\frac{C_{46}}{T}\sum_{r=0}^\infty r^2(1+r)^{-b}+\frac{C_{47}}{T}\sum_{r=0}^\infty r(1+r)^{-b}\leq \frac{C_{48}}{T}\\
F_2&\leq\frac{C_{49}}{T}\sum_{s=0}^\infty r^2(1+s)^{-b}+\frac{C_{50}}{T}\sum_{s=0}^\infty\sum_{r=s+1}^\infty (1+r)^{-b} \sum_{t=0}^\infty (1+t)^{-b}\\
&\leq \frac{C_{51}}{T}+ \sum_{s=0}^\infty \frac{C_{52}}{T}(1+s)^{-b+1}\leq \frac{C_{53}}{T}\\
F_4&= \frac{C_{54}}{T}\sum_{s=0}^\infty\sum_{r=s+1}^\infty (1+r)^{-b+1} \sum_{t=s+1}^\infty (1+t)^{-b+1}\leq \frac{C_{55}}{T}
\end{align*}
So we finally arrive at
\begin{align}\label{varestmi}
\mbox{Var}(\tilde{v}_i)\leq \frac{C_{56}b_T}{T}
\end{align}
and therefore by (\ref{vermittel}), (\ref{biaslong}) and (\ref{varestmi}) 
\begin{align}\label{eq:mselrv}
\mathbb{E}([\hat{v}_i^2-v_i^2]^2)\leq C_{57}\frac{b_T^2}{T^2}+C_{58}b_T^{-2s}+C_{59}b_T^{-2b+2}+C_{60}\frac{b_T}{T}.
\end{align}
Now we show that the long run variance estimations $\hat{v}_i$ are bounded from below for large $T,N$. Denote by $D_{N,T}$ the event that $\hat{v}_i^2>v_i^2/2,~i=1,\ldots,N,$ then
\begin{align*}
P(D_{N,T})&\geq 1-\sum_{i=1}^N P(\hat{v}_i^2<v_i^2/2)\\
&\geq 1-\sum_{i=1}^N P(|\hat{v}_i^2-v_i^2|\geq v_i^2/2)\\
&\geq 1-4\left(C_{57}\frac{b_T^2}{T^2}+C_{58}b_T^{-2m}+C_{59}b_T^{-2b+2}+C_{60}\frac{b_T}{T}\right)\sum_{i=1}^N\frac{1}{v_i^4}\rightarrow 1
\end{align*}
and therefore it is enough to prove Theorem 2 on $D_{N,T}$. In the following we show that the difference between $W_{N,T}(x)$ and $\tilde{W}_{N,T}(x)$ is negligible for every $x \in [0,1].$ By an expansion one can extract the error of the long run variance estimation from the difference
\begin{align}\label{diffvar}
W_{N,T}&(x)-\tilde{W}_{N.T}(x)\nonumber\\
&=\frac{1}{\sqrt{N}}\sum_{i=1}^N \left(\frac{1}{\hat{v}_i^2}-\frac{1}{v_i^2} \right)\left(\frac{1}{T}\left[\sum_{t=1}^{\lfloor Tx \rfloor} Y_{i,t}
-\frac{\lfloor  Tx \rfloor}{T} \sum_{t=1}^T Y_{i,t}\right]^2 -v_i^2\frac{\lfloor Tx\rfloor (T-\lfloor Tx\rfloor )}{T^2}\right)\nonumber\\
&+\frac{\lfloor Tx\rfloor (T-\lfloor Tx\rfloor )}{T^2}\frac{1}{\sqrt{N}}\sum_{i=1}^N\frac{v_i^2-\hat{v}_i^2}{\hat{v_i}^2}=G_1+G_2.
\end{align}
The second term of (\ref{diffvar}) can be bounded by (\ref{vermittel}), (\ref{biaslong}) and (\ref{eq:mselrv})
\begin{align*}
\mathbb{E}\left(G_2^2\right)&\leq \frac{1}{N}\sum_{i\neq j} \frac{4}{v_i^2v_j^2} |\mathbb{E}(\hat{v}_i^2-v_i^2)\mathbb{E}(\hat{v}_j^2-v_j^2)|+\frac{1}{N}\sum_{i=1}^N \frac{4}{v_i^4}\mathbb{E}(\hat{v}_i^2-v_i^2)^2\\
&\leq C_{61}N\left(\frac{b_T^2}{T^2}+\frac{1}{b_T^{2s}}+\frac{1}{T^2}+b_T^{-2b+2}\right)+C_{62}\left( \frac{b_T^2}{T^2}+b_T^{-2s}+b_T^{-2b+2}+\frac{b_T}{T}\right)\rightarrow 0.
\end{align*}
Using the Cauchy-Schwarz inequality and (\ref{eq:mselrv}) one gets for $E_1:$
\begin{align*}
\mathbb{E}(|E_1|)&\leq \frac{1}{\sqrt{N}}\sum_{i=1}^N \frac{4}{\delta^4}\sqrt{\mathbb{E}([\hat{v_i}^2-v_i^2]^2)}\\
&\sqrt{\mathbb{E}\left(\left[\frac{1}{T}\left\{\sum_{t=1}^{\lfloor Tx \rfloor} Y_{i,t}
-\frac{\lfloor  Tx \rfloor}{T} \sum_{t=1}^T Y_{i,t}\right\}^2 -v_i^2\frac{\lfloor Tx\rfloor (T-\lfloor Tx\rfloor )}{T^2}\right]^2\right)}\\
&\leq \sqrt{N}\sqrt{C_{62}\left( \frac{b_T^2}{T^2}+b_T^{-2s}+b_T^{-2b+2}+\frac{b_T}{T}\right)}\sqrt{\left(1+\frac{C_{63}}{T^2}\right)}\rightarrow 0
\end{align*}
where $C_{63}$ can be calculated based on step 1 and 2 of the Proof of Theorem 1. The tightness of $(\tilde{W}_{N,T}(x)-W_{N,T}(x))_{x\in [0,1]}$ can be proved like the tightness of $W_{N,T}(x)$ using that $\hat{v_i}$ is bounded from below, which completes the proof.
\end{proof}
The proof of Theorem \ref{storung} consists of three steps:
\begin{itemize}
\item showing pointwise convergence $\check W_{N,T}(x)-\check W_{N,T}(x)\rightarrow 0,~\forall x\in [0,1],$ under known long run variances,
\item proving tightness of $(\check{W}_{N,T}(x)-\check{W}_{N,T}(x))_{x\in [0,1]}$ under known long run variances,
\item verifying $\sup_{x \in [0,1]}|\check{W}_{N,T}(x)-\check{W}_{N,T}(x)|\rightarrow 0$ under estimated long run variances.
\end{itemize}
First denote $E_{N,T}$ the event that $\hat{\sigma}_i\geq \sigma_i/2$ for $i=1,\ldots,N,$ then
\begin{align}\label{ungl1}
P(E_{N,T})\geq 1-\sum_{i=1}^N P(|\hat{\sigma}_i-\sigma_i|>\sigma_i/2)\geq 1-c_2NT^{-\alpha}.
\end{align}
We use \ref{ungl1} to bound the probability $F_{N,T}$, which is the event that $|\hat{\mu}_i-\mu_i|\leq c_1T^{-\beta}$ and $|\frac{1}{\hat{\sigma}_i}-\frac{1}{\sigma_i}|\leq 4\frac{c_1}{\delta^2} T^{-\beta}$ for $i=1,\ldots,N:$
\begin{align*}
P(F_{N,T})&\geq 1-\sum_{i=1}^N c_2T^{-\alpha}-\sum_{i=1}^N \left[P\left(\frac{\hat{\sigma}_i-\sigma_i}{\sigma_i\hat{\sigma}_i}|E_{N,T}\right)P(E_{N,T})-P\left(\frac{\hat{\sigma}_i-\sigma_i}{\sigma_i\hat{\sigma}_i}|\overline{E}_{N,T}\right)P(\overline{E}_{N,T})\right]\\
&\geq 1-c_2NT^{-\alpha}-c_2NT{-\alpha}-c_2NT^{-\alpha}\rightarrow 0
\end{align*}
and so we can proof Theorem \ref{storung} under $F_{N,T}$. Let furthermore w.l.o.g $\mu_i=0,~\sigma_i=1,~i=1,\ldots,N.$ 
\begin{proof}[Proof of step 1 of Theorem \ref{storung}]
The expansion
\begin{align*}
\Psi_i\left( \frac{X_{i,t}-\hat{\mu}_i}{\hat{\sigma}_i}\right)=\Psi_i\left(X_{i,t}+X_{i,t}\left(\frac{1}{\hat{\sigma}}-1\right)-\hat{\mu}_i\left(\frac{1}{\hat{\sigma}_i}-1\right)-\hat{\mu}_i\right)
\end{align*}
is almost impossible to work with, since $\hat{\mu}_i$ and $\hat{\sigma}_i$ depend on $(X_{i,t})_{t=1,\ldots,T}$. Instead one can look at
$
\Psi_i\left(X_{i,t}+X_{i,t}dT^{-\beta}+eT^{-\beta}\right)=Z_{i,T}(d,e),
$
where $d$ and $e$ are non random with $|d|\leq c_1$ respectively $e\leq \max(c_1,4\frac{c_1}{\delta^2})$, since we are in the case of $D_{N,T}$. Denote
\begin{align*}
\bar{S}^{(i)}_{T,d,e}(x)=\frac{1}{\sqrt{T}v_i}\left(\sum_{t=1}^{\lfloor Tx \rfloor}Z_{i,t}(d,e)-\frac{\lfloor Tx \rfloor}{T}\sum_{t=1}^T Z_{i,t}(d,e)\right),~~x\in[0,1]
\end{align*}
the disturbed CUSUM-statistic
\begin{align*}
\bar{W}_{N,T,d,e}(x)=\frac{1}{\sqrt{N}}\sum_{i=1}^N \left(\left(\bar{S}_{T,d,e}^{(i)}(x)\right)^2-\frac{\lfloor x T \rfloor(T-\lfloor x T \rfloor)}{T^2}\right),~~x\in [0,1]
\end{align*}
the related panel-CUSUM. To prove 
\begin{align*}
\sup_{d,e\in[-c_1,c_1]}|W_{N,T}(x)-\bar{W}_{N,T,d,e}(x)|\rightarrow 0,~~ x\in (0,1)\end{align*}
we have to bound the difference between the squared individual CUSUM-statistics
\begin{align*}
\left(\bar{S}^{(i)}_{T,d,e}\right.&\left.(x)\right)^2-\left(S^{(i)}(x)\right)^2\\
&=\frac{1}{\sqrt{T}v_i}\left(\sum_{t=1}^k Z_{i,T}(d,e)-\frac{k}{T}\sum_{t=1}^T Z_{i,T}(d,e)-\sum_{t=1}^k \Psi_i(X_{i,t})+\frac{k}{T}\sum_{t=1}^T \Psi_i(X_{i,t})\right)\\
&\cdot\frac{1}{\sqrt{T}v_i}\left(\sum_{t=1}^k Z_{i,T}(d,e)-\frac{k}{T}\sum_{t=1}^T Z_{i,T}(d,e)+\sum_{t=1}^k \Psi_i(X_{i,t})-\frac{k}{T}\sum_{t=1}^T \Psi_i(X_{i,t})\right)=A_iB_i.
\end{align*}
We will show that $\mathbb{E}\left(B_i^2\right)$ is bounded while $\mathbb{E}\left(A_i^2\right)$ converges to 0 sufficiently fast. Essential in both calculations is a Taylor expansions of order $m$ for $Z_{i,t}(d,e)$ around $X_{i,t}$ for $t=1,\ldots,T:$
\begin{align}\label{Taylor}
Z_{i,t}(d,e)&=\Psi_i(X_{i,t})+\sum_{k=1}^m\frac{ \Psi_i(X_{i,t})^{(m)}}{m!}\left(X_{i,t}dT^{-\beta}+eT^{-\beta}\right)^m\nonumber\\
&+\frac{\Psi_i(\xi_{i,t})^{(m+1)}}{(m+1)!}\left(X_{i,t}dT^{-\beta}+eT^{-\beta}\right)^{m+1}
\end{align} 
for some $\xi_{i,t}\in [X_{i,t}-X_{i,t}dT^{-\beta}-eT^{-\beta},X_{i,t}+X_{i,t}dT^{-\beta}+eT^{-\beta}].$ Denote 
\begin{align*}U_i^{(r,s)}(k)=\begin{cases}\sum_{t=1}^k \Psi_i^{(r)}(X_{i,t})X_{i,t}^s-\frac{k}{T}\sum_{t=1}^T \Psi_i^{(r)}(X_{i,t})X_{i,t}^s&r=0,\ldots,m\\
\sum_{t=1}^k \Psi_i^{(r)}(\xi_{i,t})X_{i,t}^s-\frac{k}{T}\sum_{t=1}^T \Psi_i^{(r)}(\xi_{i,t})X_{i,t}^s&r=m+1
\end{cases},s\leq m
\end{align*}
the non standardized CUSUM statistics which arises from the $r-th$ Taylor-summand in (\ref{Taylor}). In $A_i$ the original CUSUM $U_i^{(0,0)}(k)$ cancels out and by repeated application of the $c_r$ inequality we obtain:
\begin{align*}
\mathbb{E}(A_i^2)&\leq \sum_{r=1,\ldots,m+1,s\leq m}\frac{2^{(m+1)(m+2)/2-1}}{Tv_i^2} d^{2s}e^{2(r-s)}T^{-2\beta r}\frac{{r \choose s}}{r!}\mathbb{E}\left(\left[U_i^{(r,s)}(k)\right]^2\right)
\end{align*}
Because of assumption i) of Theorem \ref{storung} the processes $\left(\Psi_i(X_{i,t})^{(r)}X_{i,t}^s\right)_{t\in \mathbb{N}}$ is also P-NED for $r=1,\ldots,m$ and $s\leq m$, so we can apply the same calculations as in step 1 of the proof of Theorem \ref{T1} and receive $\frac{1}{T}\mathbb{E}\left(\left[U_i^{(r,s)}(k)\right]^2\right)\leq C_{63}+\frac{C_64}{T}$ for $r=1,\ldots,m$. For $m+1$ we use the upper bounds $c$ and $d$ of assumption i) and ii) of Theorem \ref{storung} to obtain $\frac{1}{T^2}\mathbb{E}\left(\left[U_i^{(r,s)}(k)\right]^2\right)\leq C_{63}.$ So we have $\mathbb{E}(A_i^2)\leq C_{65}T^{-2\beta}+2C_{64}T^{-2\beta(m+1)+1}$ and by the same calculations $\mathbb{E}(B_i^2)\leq C_{66}+C_{67}T^{-2\beta}+C_{68}T^{-2\beta(m+1)+1}$. Finally we apply the Cauchy-Schwarz inequality to obtain
\begin{align*}
\sup_{d,e\in[-c_1,c_1]}E(|W_{N,T}(x)-\bar{W}_{N,T,d}(x)|)&\leq \frac{1}{\sqrt{N}}\sum_{i=1}^N \left(C_{69}T^{-\beta}+C_{70}T^{-(m+1)\beta+1/2}\right)\rightarrow 0
\end{align*}
which implies pointwise convergence.
\end{proof} 
\begin{proof}[Step 2 of the proof of Theorem \ref{storung}]
Denote $Tx=u>v=Ty,$ like in step 4 of the proof of Theorem \ref{T1}, we want to apply the tightness critierion of \cite{Billingsley}. But before we use the Taylor expansion (\ref{Taylor})
\begin{align*}
W_{N,T}(x)-\bar{W}_{N,T,d}(x)&=\frac{1}{\sqrt{N}}\sum_{i=1}^N \left[\frac{1}{Tv_i^2}U_i^{(0,0)}(u)^2-\frac{1}{Tv_i^2}\left(\sum_{r=0,\ldots,m+1,s\leq r} a_{r,s}U_i^{(r,s)}(u)\right)^2\right]\\
&=\sum_{r=0,\ldots,m+1,s\leq r}\sum_{n=0,\ldots,m+1,p\leq n}\frac{1}{\sqrt{N}}\sum_{i=1}^N\frac{1}{Tv_i^2} a_{r,s}a_{n,p}U_i^{(r,s)}(u)U_i^{(n,p)}(u)
\end{align*}
where $a_{r,s}=\frac{d^se^{r-s}T^{-\beta r}{r \choose s}}{r!}$ to see that we only
need to proof tightness for the individual summands 
\begin{align*}\left(A_{N,T}(x)^{rsnp}\right)_{x\in [0,1]}&=\left(\frac{1}{\sqrt{N}}\sum_{i=1}^N \frac{1}{Tv_i^2}a_{r,s}a_{n,p}U_i^{(r,s)}(\lfloor Tx \rfloor )U_i^{(n,p)}(\lfloor Tx \rfloor )\right)_{x\in [0,1]}
\end{align*}
for $r,n=1,\ldots,m+1,$ $s\leq r$ and $p\leq n .$ Denote 
\begin{align*}M_i^{(rsnp)}&=\frac{1}{Tv_i^2}a_{r,s}a_{n,p}U_i^{(r,s)}(u)U_i^{(n,p)}(u)-\frac{1}{Tv_i^2}a_{r,s}a_{n,p}U_i^{(r,s)}(v)U_i^{(n,p)}(u),
\end{align*}
then we expand the difference of the fourth moment to
\begin{align*}
\mathbb{E}&(|A_{N,T}(x)^{rsnp}-A_{N,T}(y)^{rsnp}|^4)\\
&=\frac{1}{N^2}\sum_{t\neq u\neq v\neq w}\mathbb{E}(M_t^{(rsnp)})\mathbb{E}(M_u^{(rsnp)})\mathbb{E}(M_v^{(rsnp)})\mathbb{E}(M_w^{(rsnp)})\\
&+\frac{1}{N^2}\sum_{t\neq u\neq v}\mathbb{E}\left(\left[M_t^{(rsnp)}\right]^2\right)\mathbb{E}(M_u^{(rsnp)})\mathbb{E}(M_v^{(rsnp)})+\frac{1}{N^2}\sum_{t\neq u}\mathbb{E}\left(\left[M_t^{(rsnp)}\right]^3\right)\mathbb{E}(M_u^{(rsnp)})\\
&+\frac{1}{N^2}\sum_{t\neq u}\mathbb{E}\left(\left[M_t^{(rsnp)}\right]^2\right)\mathbb{E}\left(\left[M_u^{(rsnp)}\right]^2\right)+\frac{1}{N^2}\sum_{t=1}^N\mathbb{E}\left(\left[M_t^{(rsnp)}\right]^4\right).
\end{align*}
Exemplarily we look at $\mathbb{E}(M_i^{(jk)})$. If $j,k<m+1$ we can apply Proposition \ref{eq:Bernsteinbound} to get
\begin{align*}
|\mathbb{E}(M_i^{(rsnp)})|&= \Bigg|\frac{a_{r,s}a_{n,p}}{Tv_i^2}\mathbb{E}\left(\left[\sum_{t=1}^{\lfloor Tx \rfloor}\Psi_i^{(r)}(X_{i,t})X_{i,t}^s-\frac{\lfloor Tx \rfloor }{T}\sum_{t=1}^T \Psi_i^{(r)}(X_{i,t})X_{i,t}^s\right]\right.\\
&\cdot \left[\sum_{t=\lfloor Ty \rfloor+1}^{\lfloor Tx \rfloor }\Psi_i^{(n)}(X_{i,t})X_{i,t}^p-\frac{\lfloor Tx \rfloor -\lfloor Ty \rfloor}{T}\sum_{t=1}^T \Psi_i^{(n)}(X_{i,t})X_{i,t}^p\right]\\
&-\left[\sum_{t=1}^{\lfloor Ty \rfloor}\Psi_i^{(n)}(X_{i,t})X_{i,t}^p-\frac{\lfloor Ty \rfloor }{T}\sum_{t=1}^T \Psi_i^{(n)}(X_{i,t})X_{i,t}^p\right]\\
&\cdot \left. \left[\sum_{t=\lfloor Ty \rfloor +1}^{\lfloor Tx \rfloor }\Psi_i^{(r)}(X_{i,t})X_{i,t}^{s}-\frac{\lfloor Tx \rfloor-\lfloor Ty \rfloor }{T}\sum_{t=1}^T \Psi_i^{(r)}(X_{i,t})X_{i,t}^{s}\right]\right)\Bigg|\\
&\leq \frac{a_{r,s}a_{n,p}}{\delta}\left(\sqrt{C_{71}\lfloor Tx \rfloor}\sqrt{C_{72}(\lfloor Tx \rfloor-\lfloor Ty \rfloor})+\sqrt{C_{73}\lfloor Tx \rfloor}\sqrt{C_{74}(\lfloor Tx \rfloor-\lfloor Ty \rfloor})\right)\\
&\leq C_{78}T^{-\beta(r+n)}\sqrt{x-y}
\end{align*}
Analogously one can prove $\mathbb{E}([M_i^{(rsnp)}]^2)\leq T^{-2\beta(n+p)} C_{79}(x-y)$, $|\mathbb{E}([M_i^{rsnp}]^3)|\leq C_{80} T^{-3\beta(j+k)}(x-y)^\frac{3}{2}$ and $\mathbb{E}([M_i^{rsnp}]^4)\leq C_{81}T^{-4\beta(n+p)}(x-y)^2$ so that
\begin{align*}
\mathbb{E}&(|A_{N,T}(x)^{rsnp}-A_{N,T}(y)^{rsnp}|^4)\leq C_{82}T^{-4\beta(j+k)}(x-y)^2\left(N^2+N+1+\frac{1}{N}\right)\rightarrow 0
\end{align*}
which proves tightness. We cannot use Proposition \ref{eq:Bernsteinbound} for the Taylor-remainders which  occur if $j=m+1$ or $k=m+1$. In this cases we use assumption ii) of Theorem \ref{storung} to get
\begin{align}\label{trivial2}
\mathbb{E}\left\{\left[\sum_{i=1}^T \Psi_i^{(m+1)}(\xi_{i,t})X_{i,t}^s\right]^k\right\}\leq T^k d.
\end{align}
So if only one of $r,n$ equals $m+1$ we have
\begin{align*}
\mathbb{E}&(|B_{N,T}(x)^{rs(m+1)p}-B_{N,T}(y)^{rs(m+1)p)}|^4)\leq C_{83}T^{-4\beta(r+m+1)+2}(x-y)^2\left(N^2+N+1+\frac{1}{N}\right)\rightarrow 0
\end{align*}
respectively if both are $m+1$
\begin{align*}
\mathbb{E}(|B_{N,T}(x)^{(m+1)s(m+1)p}&-B_{N,T}(y)^{(m+1)s(m+1)p}|^4)\\
&\leq C_{84}T^{-8\beta(m+1)+4}(x-y)^2\left(N^2+N+1+\frac{1}{N}\right)\rightarrow 0.
\end{align*}
\end{proof}
\begin{proof}[Step 3 of the proof of Theorem \ref{storung}]
We continue to assume that w.l.o.g. $\mu_i=0,\sigma_i=1$ for $i=1,\ldots,N.$  The proof follows that of Theorem \ref{T2}. We first show that the difference between the long run variance with known standardization $\hat{v}_i$ and the long run variance with estimated standardization
\begin{align*}
\check{v}_i=\check{\gamma}_{i,0}+2\sum_{i=0}^{b_{i,T}}\check{\gamma}_{i,h}k\left(\frac{h}{b_{T}}\right),~i=1,\ldots,N,
\end{align*}
with
\begin{align*}
\check{\gamma}_i(h)=\frac{1}{T}\sum_{t=1}^{N-h}\left(\Psi_i\left[\frac{X_{i,t}-\mu_T^{(i)}}{\sigma_T^{(i)}}\right]-\check{Y}_i\right)\left(\Psi_i\left[\frac{X_{i,t}-\mu_T^{(i)}}{\sigma_T^{(i)}}\right]-\check{Y}_i\right),~h=0,\ldots,T-1, 
\end{align*}
and $\check{Y}_i=\frac{1}{T}\sum_{t=1}^T \Psi_i\left(\frac{X_{i,t}-\mu_T^{(i)}}{\sigma_T^{(i)}}\right)$
converge with rate $T^{-\min(2\beta,2\beta(m+1)-1)}.$ We use the Taylor series (\ref{Taylor}) and denote
$
Z_{i,t}^{(k,l)}=\Psi_i\left(X_{i,t}\right)^{(k)}X_{i,t}^l-\frac{1}{T}\sum_{r=1}^T \Psi_i\left(X_{i,r}\right)^{(k)}X_{i,t}^l
$ 
to obtain
\begin{align}\label{Zerlegung}
\hat{v}_i^2-\check{v}_i^2&=\frac{1}{T}\sum_{s.t=1}^T\left(\Psi_i(X_{i,t})-\frac{1}{T}\sum_{r=1}^T \Psi_i(X_{i,r}) \right)\left(\Psi_i(X_{i,s})-\frac{1}{T}\sum_{r=1}^T \Psi_i(X_{i,r}) \right)k\left(\frac{s-t}{b_T}\right)\nonumber\\
&-\sum_{k=0,l\leq k}^{m+1}\sum_{n=0,p\leq n}^{m+1}a_{k,l}a_{n,p}\frac{1}{T}\sum_{s.t=1}^T Z_{i,t}^{(k,l)}Z_{i,t}^{(n,p)}k\left(\frac{s-t}{b_T}\right).
\end{align}
So by (\ref{Zerlegung}) and the $c_r$ inequality we see
\begin{align*}
\mathbb{E}\left(\left[\hat{v}_i^2-\check{v}_i^2\right]^2\right)\leq 2^{\frac{(m+2)(m+1)}{2}-1}\sum_{k=0,l\leq k}^{m+1}\sum_{n=0,p\leq n}^{m+1}a_{k,l}^2a_{n,p}^2\frac{1}{T^2}\mathbb{E}\left(\left[\sum_{s,t=1}^T Z_{i,t}^{(k,l)}Z_{i,s}^{(n,p)}k\left(\frac{s-t}{b_T}\right)\right]^2\right)
\end{align*}
and
\begin{align*}
 \frac{1}{T^2}&\mathbb{E}\left(\left[\sum_{s,t=1}^T Z_{i,t}^{(k,l)}Z_{i,s}^{(n,p)}k\left(\frac{s-t}{b_T}\right)\right]^2\right)\\
 &\leq \frac{2^4}{T^2}\mathbb{E}\left(\left[\sum_{s,t=1}^T\Psi(X_{i,t})^{(k)}X_{i,t}^l\Psi(X_{i,s})^{(n)}X_{i,s}^pk\left(\frac{s-t}{b_T}\right)\right]^2\right)\\
&+\frac{2^4}{T^4}\mathbb{E}\left(\left[\sum_{s,t,r=1}^T\Psi(X_{i,t})^{(k)}X_{i,t}^l\Psi(X_{i,r})^{(n)}X_{i,r}^pk\left(\frac{s-t}{b_T}\right)\right]^2\right)\\
&+\frac{2^4}{T^4}\mathbb{E}\left(\left[\sum_{s,t,r=1}^T\Psi(X_{i,r})^{(k)}X_{i,r}^l\Psi(X_{i,s})^{(n)}X_{i,s}^pk\left(\frac{s-t}{b_T}\right)\right]^2\right)\\
&+\frac{2^4}{T^6}\mathbb{E}\left(\left[\sum_{s,t,r,u=1}^T\Psi(X_{i,r})^{(k)}X_{i,r}^l\Psi(X_{i,u})^{(n)}X_{i,u}^pk\left(\frac{s-t}{b_T}\right)\right]^2\right)=A_1+A_2+A_3+A_4.
\end{align*}
We exemplarily look at $A_1.$ Let $r,n<m+1$, like in the proof of Proposition \ref{eq:Bernstein} we want to rearrange the summands and split the expectations where the time lag is largest by applying Proposition \ref{eq:eqmomentboundeddifferent}. Here we have the problem that we apply two different functions $g_1(x)=\Psi^{(k)}(x)x^l$ and $g_2(x)=\Psi^{(n)}(x)x^p$ to the random variables $X_{i,t},$ so after the rearrangement we have six different sums, the one where $g_1$ is applied to the two random variables with two smallest indices and $g_2$ to the other and so on:
\begin{align*}
A_1&\leq 4\sum_{\begin{matrix}a,b,c,d \in \{1,2\}\\
a+b+c+d=6
\end{matrix}}\sum_{1\leq s\leq t \leq u \leq v\leq T}|\mathbb{E}[g_a(X_{i,s}),g_b(X_{i,t}),g_c(X_{i,u}),g_ d(X_{i,v})]|\\
&\leq 4\sum_{\begin{matrix}a,b,c,d \in \{1,2\}\\
a+b+c+d=6
\end{matrix}}\sum_{\begin{matrix}
s,t,u,v=1\\
s+t+u+v\leq T
\end{matrix}}|\mathbb{E}[g_a(X_{i,s}),g_b(X_{i,s+t}),g_c(X_{i,s+t+u}),g_ d(X_{i,s+t+u+v})]|\\
&\leq 4T\sum_{\begin{matrix}a,b,c,d \in \{1,2\}\\
a+b+c+d=6
\end{matrix}}\left(\sum_{u,v\leq t}|\mathbb{E}[g_a(X_{i,s}),g_b(X_{i,s+t}),g_c(X_{i,s+t+u}),g_ d(X_{i,s+t+u+v})]|\right.\\
&+\sum_{t,v \leq u}|\mathbb{E}[g_a(X_{i,s}),g_b(X_{i,s+t}),g_c(X_{i,s+t+u}),g_ d(X_{i,s+t+u+v})]|\\
&\left.+\sum_{t,u\leq v}|\mathbb{E}[g_a(X_{i,s}),g_b(X_{i,s+t}),g_c(X_{i,s+t+u}),g_ d(X_{i,s+t+u+v})]|\right)=B_1+B_2+B_3.
\end{align*}
Proposition \ref{eq:eqmomentboundeddifferent} yields
\begin{align*}
B_1\leq C_{85}T\sum_{t=1}t^2 (1+t)^{-b}=TC_{86}\geq B_3
\end{align*}
and
\begin{align*}
B_2&\leq TC_{86}+C_{87} \sum_{\begin{matrix}a,b,c,d \in \{1,2\}\\
a+b+c+d=6
\end{matrix}}\sum_{u,v=1}^T\mathbb{E}[g_a(X_{i,s}),g_b(X_{i,s+t})]\sum_{u,v=1}^T \mathbb{E}[g_c(X_{i,s}),g_d(X_{i,s+t})]\\
&\leq TC_{86}+C_{87}T^2.
\end{align*}
If $r=m+1$ or $n=m+1$ one can use (\ref{trivial2}) which yields $A_1\leq C_{88} T^3$ respectively $A_1\leq C_{88} T^4$ if $r=n=m+1.$
Together we obtain
\begin{align*}
\mathbb{E}\left(\left[\hat{v}_i^2-\check{v}_i^2\right]^2\right)\leq C_{89}T^{-2\beta}+C_{90}T^{-2\beta(m+1)+1}+C_{91}T^{-4\beta(m+1)+2},
\end{align*}
from now on we follow the proof of Theorem \ref{T2}.
\end{proof}
\begin{proof}[Proof of Theorem 6]
First we show that $I_{\{X_i\leq x\}}$ is P-NED for $x\in \mathbb{R}$ with approximating constants $\tilde{a}_k=a_k^\kappa+\Phi(a_k^\kappa)a_k$ and error function $\tilde{\Phi}(x)=I_{(0,1)}(x).$ Denote therefore $R=\sup_{x\in \mathbb{R}}f(x),$ then
\begin{align*}
P(|I_{\{X_0\leq x\}}&-I_{\{f_k(Z_{-k},\ldots,Z_k)\leq x\}}|>\epsilon)\\
&=P(|I_{\{X_0\leq x\}}-I_{\{f_k(Z_{-k},\ldots,Z_k)\leq x\}}|>\epsilon||X_0-x|\leq a_k^\kappa)P(|X_0-x|\leq a_k^\kappa)\\
&+P(|I_{\{X_0\leq x\}}-I_{\{f_k(Z_{-k},\ldots,Z_k)\geq x\}}|>\epsilon||X_0-x|\leq a_k^\kappa)P(|X_0-x|\geq a_k^\kappa)\\
&\leq 2a_k^\kappa R+\Phi(a_k^\kappa)a_k
\end{align*} 
and above probability is 0 for $\epsilon\geq 1$, so we can choose $\tilde{\Phi}(x)=I_{(0,1)}(x)$ for $x>0.$ Now we follow the proof in \cite{serfling}, except for using Markov's inequality in combination with Proposition \ref{eq:Bernsteinbound} instead of Hoeffding's inequality. Let $F$ denote the distribution function of $X_1,$ then
\begin{align}\label{medianzer}
P(|\hat{\mu}-\mu|>\epsilon)=P(\hat{\mu}>\mu+\epsilon)+P(\hat{\mu}<\mu-\epsilon).
\end{align}
For the first summand in (\ref{medianzer}) one has
\begin{align*}
P(\hat{\mu}>\mu+\epsilon)&=P(1/2>\hat{F}_n(\mu+\epsilon))=P\left(\sum_{t=1}^T I_{\{X_t>\mu+\epsilon\}}>T/2\right)\\
&\leq P\left(\sum_{t=1}^T \left\{I_{\{X_t>\mu+\epsilon\}}-\mathbb{E}\left[I_{\{X_t>\mu+\epsilon\}}\right]\right\}>T[F(\mu+\epsilon)-1/2] \right)\\
&\leq \frac{G_2T^{\lfloor p/2 \rfloor}}{T^p F(\mu+\epsilon-1/2)^p}\leq \frac{G_2}{T^{p/2}\epsilon^p M^p}
\end{align*} 
where the last inequality is due to the mean value theorem. The same calculation for the second summand in (\ref{medianzer}) yields (\ref{medianaus}). Now we proof the inequality for the MAD in the same way as the one for the median. Denote $G$ the distribution function of $Y_1,$ then
\begin{align}\label{madzer}
P(|\hat{\sigma}-\sigma|>\epsilon)=P(\hat{\sigma}>\sigma+\epsilon)+P(\hat{\sigma}<\sigma-\epsilon).
\end{align}
For the first summand in (\ref{madzer}) we have:
\begin{align*}
P(\hat{\sigma}>\sigma+\epsilon)&=P\left(\sum_{t=1}^T I_{\{Y_i>\sigma+\epsilon\}>T/2}\right)\\
&\leq P\left(\sum_{t=1}^T I_{\{|X_t-\hat{\mu}|>\sigma+\epsilon\}}>T/2\Big||\hat{\mu}-\mu|<\epsilon/2\right)P(|\hat{\mu}-\mu|<\epsilon/2)\\
&+P\left(\sum_{t=1}^T I_{\{|X_t-\hat{\mu}|>\sigma+\epsilon\}}>T/2\Big||\hat{\mu}-\mu|>\epsilon/2\right)P(|\hat{\mu}-\mu|>\epsilon/2)\\
&\leq P\left(\sum_{t=1}^T I_{\{|X_t-\hat{\mu}|>\sigma+\epsilon/2\}}>T/2\right)+P(|\hat{\mu}-\mu|>\epsilon/2)\\
&\leq \frac{\tilde{G}_2}{T^{p/2}(\epsilon/2)^p \tilde{M}^p}+\frac{G_2}{T^{p/2}(\epsilon/2)^p M^p}
\end{align*}
which one also obtains for the second summand in (\ref{madzer}).
\end{proof}

\end{document}